\newcommand{\codim}{\operatorname*{codim}}
\newcommand{\Spec}{\operatorname*{Spec}}
\newcommand{\Wpred}{\mathsf{Wpred}}
\newcommand{\Spred}{\mathsf{Spred}}
\newcommand{\Schub}{\mathfrak{S}}
\newcommand{\Groth}{\mathfrak{G}}
\newcommand{\rmi}{\mathrm{i}}
\newcommand{\rmrc}{\mathrm{rc}}
\newcommand{\rmirc}{\mathrm{irc}}
\theoremstyle{plain}
\newtheorem{theorem}{Theorem}[section]
\newtheorem{proposition}[theorem]{Proposition}
\newtheorem{corollary}[theorem]{Corollary}
\newtheorem{lemma}[theorem]{Lemma}
\theoremstyle{definition}
\newtheorem{problem}[theorem]{Question}
\newtheorem{example}[theorem]{Example}
\newtheorem{warning}[theorem]{Notational Warning}
\numberwithin{equation}{section}
\newcommand{\id}{\text{id}}
\newcommand{\brur}[2]{(#1\! \leftrightarrow\! #2)}
\newcommand{\cG}{{\mathcal G}}
\newcommand{\dbrac}[1]{{\llbracket #1 \rrbracket}} 	
\newcommand{\pattern}[4]{										
  \raisebox{0.6ex}{
  \begin{tikzpicture}[scale=0.35, baseline=(current bounding box.center), #1]
  \useasboundingbox (0.0,-0.1) rectangle (#2+1.4,#2+1.1);
    \foreach \x/\y in {#4}
      \fill[pattern=north east lines] (\x,\y) rectangle +(1,1);
    \draw (0.01,0.01) grid (#2+0.99,#2+0.99);
    \foreach \x/\y in {#3}
      \filldraw (\x,\y) circle (6pt);
  \end{tikzpicture}}
}
\newcommand{\impattern}[5]{									
  \raisebox{0.6ex}{
  \begin{tikzpicture}[scale=0.35, baseline=(current bounding box.center), #1]
  \useasboundingbox (0.0,-0.1) rectangle (#2+1.4,#2+1.1);
    \foreach \x/\y in {#5}
      \fill[pattern=north east lines] (\x,\y) rectangle +(1,1);
    \draw (0.01,0.01) grid (#2+0.99,#2+0.99);
    \foreach \x/\y in {#4}
      \draw[fill=white] (\x,\y) circle (6pt);
    \foreach \x/\y in {#3}
      \filldraw (\x,\y) circle (6pt);
  \end{tikzpicture}}
}
\newcommand{\imopattern}[6]{									
  \raisebox{0.6ex}{
  \begin{tikzpicture}[scale=0.35, baseline=(current bounding box.center), #1]
  \useasboundingbox (0.0,-0.1) rectangle (#2+1.4,#2+1.1);
    \foreach \x/\y in {#6}
      \fill[pattern=north east lines] (\x,\y) rectangle +(1,1);
    \draw (0.01,0.01) grid (#2+0.99,#2+0.99);
    \foreach \x/\y in {#4}
      \draw[fill=white] (\x,\y) circle (6pt);
    \foreach \x/\y in {#5}
      \draw[fill=white] (\x,\y) circle (10pt);
    \foreach \x/\y in {#3}
      \filldraw (\x,\y) circle (6pt);
  \end{tikzpicture}}
}
\newcommand{\patternsbm}[5]{									
  \raisebox{0.6ex}{
  \begin{tikzpicture}[scale=0.35, baseline=(current bounding box.center), #1]
  \useasboundingbox (0.0,-0.1) rectangle (#2+1.4,#2+1.1);
    \foreach \x/\y in {#4}
      \fill[pattern=north east lines] (\x,\y) rectangle +(1,1);
    \draw (0.01,0.01) grid (#2+0.99,#2+0.99);
    \foreach \x/\y/\z/\w/\A in {#5}
       \fill[color = white!100, opacity=1, rounded corners = 1.5pt] (\x+0.1,\y+0.1) rectangle (\z-0.1,\w-0.1);
    \foreach \x/\y/\z/\w/\A in {#5}
       \draw[color = black, rounded corners = 1.5pt] (\x+0.1,\y+0.1) rectangle (\z-0.1,\w-0.1);
    \foreach \x/\y/\z/\w/\A in {#5}
       \fill[black] (\x/2+\z/2,\y/2+\w/2) node {$\scriptstyle\A$};
    \foreach \x/\y in {#3}
      \filldraw (\x,\y) circle (6pt);

  \end{tikzpicture}}
}
\newcommand{\imopatternsbm}[7]{									
  \raisebox{0.6ex}{
  \begin{tikzpicture}[scale=0.35, baseline=(current bounding box.center), #1]
  \useasboundingbox (0.0,-0.1) rectangle (#2+1.4,#2+1.1);
    \foreach \x/\y in {#6}
      \fill[pattern=north east lines] (\x,\y) rectangle +(1,1);
    \draw (0.01,0.01) grid (#2+0.99,#2+0.99);
    \foreach \x/\y/\z/\w/\A in {#7}
       \fill[color = white!100, opacity=1, rounded corners = 1.5pt] (\x+0.1,\y+0.1) rectangle (\z-0.1,\w-0.1);
    \foreach \x/\y/\z/\w/\A in {#7}
       \draw[color = black, rounded corners = 1.5pt] (\x+0.1,\y+0.1) rectangle (\z-0.1,\w-0.1);
    \foreach \x/\y/\z/\w/\A in {#7}
       \fill[black] (\x/2+\z/2,\y/2+\w/2) node {$\scriptstyle\A$};
    \foreach \x/\y in {#4}
      \draw[fill=white] (\x,\y) circle (6pt);
    \foreach \x/\y in {#5}
      \draw[fill=white] (\x,\y) circle (10pt);
    \foreach \x/\y in {#3}
      \filldraw (\x,\y) circle (6pt);
  \end{tikzpicture}}
}
\newcommand{\patternsbmm}[6]{									
  \raisebox{0.6ex}{											
  \begin{tikzpicture}[scale=0.35, baseline=(current bounding box.center), #1]
  \useasboundingbox (0.0,-0.1) rectangle (#2+1.4,#2+1.1);
    \foreach \x/\y in {#4}
      \fill[pattern=north east lines] (\x,\y) rectangle +(1,1);
    \draw (0.01,0.01) grid (#2+0.99,#2+0.99);
    \foreach \x/\y/\z/\w/\A in {#5}
       \fill[color = white!100, opacity=1, rounded corners = 1.5pt] (\x+0.1,\y+0.1) rectangle (\z-0.1,\w-0.1);
    \foreach \x/\y/\z/\w/\A in {#5}
       \draw[color = black, rounded corners = 1.5pt] (\x+0.1,\y+0.1) rectangle (\z-0.1,\w-0.1);
    \foreach \x/\y/\z/\w/\A in {#6}
       \fill[black] (\x/2+\z/2,\y/2+\w/2) node {$\scriptstyle\A$};
    \foreach \x/\y in {#3}
      \filldraw (\x,\y) circle (6pt);

  \end{tikzpicture}}
}
\newcommand{\patternsbmTenner}[6]{						
  \raisebox{0.6ex}{
  \begin{tikzpicture}[scale=0.35, baseline=(current bounding box.center), #1]
  \useasboundingbox (0.0,-0.1) rectangle (#2+1.4,#2+1.1);
    \foreach \x/\y in {#4}
      \fill[pattern=north east lines] (\x,\y) rectangle +(1,1);
    \draw (0.01,0.01) grid (#2+0.99,#2+0.99);
    \foreach \x/\y/\z/\w/\A in {#5}
       \fill[color = white!100, opacity=1, rounded corners=1.5pt] (\x+0.1,\y+0.1) rectangle (\z-0.1,\w-0.1);
    \foreach \x/\y/\z/\w/\A in {#5}
       \draw[color = black, rounded corners = 1.5pt] (\x+0.1,\y+0.1) rectangle (\z-0.1,\w-0.1);
    \draw[color = black!100, rounded corners=1.5pt] (0.1,4) -- (0.1,3+0.1) -- (2+0.1,3+0.1) -- (2+0.1,1+0.1) -- (4+0.1,1+0.1) -- (4+0.1,0+0.1) -- (5-0.1,0+0.1) -- (5-0.1,2-0.1) -- (3-0.1,2-0.1) -- (3-0.1,4-0.1) -- (1-0.1,4-0.1) -- (1-0.1,5-0.1) -- (0.1,5-0.1) -- (0.1,4);
    \fill[color = white!100, opacity=1, rounded corners=1.5pt] (0.1,4) -- (0.1,3+0.1) -- (2+0.1,3+0.1) -- (2+0.1,1+0.1) -- (4+0.1,1+0.1) -- (4+0.1,0+0.1) -- (5-0.1,0+0.1) -- (5-0.1,2-0.1) -- (3-0.1,2-0.1) -- (3-0.1,4-0.1) -- (1-0.1,4-0.1) -- (1-0.1,5-0.1) -- (0.1,5-0.1) -- (0.1,4);
    \foreach \x/\y/\z/\w/\A in {#6}
       \fill[black] (\x/2+\z/2,\y/2+\w/2) node {$\scriptstyle\A$};
    \foreach \x/\y in {#3}
      \filldraw (\x,\y) circle (6pt);

  \end{tikzpicture}}
}
\newcommand{\vinc}[3]{
\begin{tikzpicture}[baseline, inner sep = 0mm]

	\begin{scope}[yshift = 3]
	
	\foreach \x/\y in {#2}
	{
		\node at (\x*0.2,-0.14)  [label=$\y$] {};  
	}
	
	\foreach \z in {#3}
	{
		\ifnum 0<\z
			\ifnum \z<#1
				\draw[thick] (\z*0.2-0.07,-0.17) -- (\z*0.2+0.27,-0.17);
			\fi
		\fi
		
		\ifnum 0=\z
			\draw[thick] (0.08,0.1) -- (0.08,-0.17) -- (0.26,-0.17);
		\fi
		
		\ifnum \z=#1
			\draw[thick] (\z*0.2+0.13,0.1) -- (\z*0.2+0.13,-0.17) -- (\z*0.2-0.05,-0.17);
		\fi
	}
	\end{scope}
\end{tikzpicture}
}
\newcommand{\vinci}[3]{
\begin{tikzpicture}[baseline, scale = 0.8, , inner sep = 0mm]

	\begin{scope}[yshift = 3]
	
	\foreach \x/\y in {#2}
	{
		\node at (\x*0.2,-0.14)  [label=$\scriptstyle{\y}$] {};  
	}
	
	\foreach \z in {#3}
	{
		\ifnum 0<\z
			\ifnum \z<#1
				\draw[thick] (\z*0.2-0.07,-0.17) -- (\z*0.2+0.27,-0.17);
			\fi
		\fi
		
		\ifnum 0=\z
			\draw[thick] (0.08,0.1) -- (0.08,-0.17) -- (0.26,-0.17);
		\fi
		
		\ifnum \z=#1
			\draw[thick] (\z*0.2+0.13,0.1) -- (\z*0.2+0.13,-0.17) -- (\z*0.2-0.05,-0.17);
		\fi
	}
	\end{scope}
\end{tikzpicture}
}
\newcommand{\bivinc}[4]{
\begin{tikzpicture}[baseline, inner sep = 0mm]

	\begin{scope}[yshift = -2]
	
	\foreach \x/\y in {#2}
	{
		\node at (\x*0.2,0.15)   [label=$\x$] {};  
		\node at (\x*0.2,-0.14)  [label=$\y$] {};  
	}
	
	\foreach \z in {#3}
	{
		\ifnum 0<\z
			\ifnum \z<#1
				\draw[thick] (\z*0.2-0.07,-0.17) -- (\z*0.2+0.27,-0.17);
			\fi
		\fi
		
		\ifnum 0=\z
			\draw[thick] (0.08,0.1) -- (0.08,-0.17) -- (0.21,-0.17);
		\fi
		
		\ifnum \z=#1
			\draw[thick] (\z*0.2+0.13,0.1) -- (\z*0.2+0.13,-0.17) -- (\z*0.2,-0.17);
		\fi
	}
	
	\foreach \z in {#4}
	{
		\ifnum 0<\z
			\ifnum \z<#1
				\draw[thick] (\z*0.2-0.07,0.47) -- (\z*0.2+0.27,0.47);
			\fi
		\fi
		
		\ifnum 0=\z
			\draw[thick] (0.08,0.21) -- (0.08,0.47) -- (0.21,0.47);
		\fi
		
		\ifnum \z=#1
			\draw[thick] (\z*0.2+0.13,0.21) -- (\z*0.2+0.13,0.47) -- (\z*0.2,0.47);
		\fi
	}
	\end{scope}
\end{tikzpicture}
}
\newcommand{\bivincs}[5]{
\begin{tikzpicture}[baseline, inner sep = 0mm]

	\begin{scope}[yshift = -2]

	\foreach \x/\w in {#2}
	{
		\node at (\x*0.2,-0.14)   [label=$\w$] {}; 
	}
	
	\foreach \x/\w in {#3}
	{
		\node at (\x*0.2,0.15) [label=$\w$] {}; 
	}
	
	\foreach \z in {#4}
	{
		\ifnum 0<\z
			\ifnum \z<#1
				\draw[thick] (\z*0.2-0.07,-0.17) -- (\z*0.2+0.27,-0.17);
			\fi
		\fi
		
		\ifnum 0=\z
			\draw[thick] (0.08,0.1) -- (0.08,-0.17) -- (0.26,-0.17);
		\fi
		
		\ifnum \z=#1
			\draw[thick] (\z*0.2+0.13,0.1) -- (\z*0.2+0.13,-0.17) -- (\z*0.2-0.05,-0.17);
		\fi
	}
	
	\foreach \z in {#5}
	{
		\ifnum 0<\z
			\ifnum \z<#1
				\draw[thick] (\z*0.2-0.07,0.47) -- (\z*0.2+0.27,0.47);
			\fi
		\fi
		
		\ifnum 0=\z
			\draw[thick] (0.08,0.21) -- (0.08,0.47) -- (0.26,0.47);
		\fi
		
		\ifnum \z=#1
			\draw[thick] (\z*0.2+0.13,0.21) -- (\z*0.2+0.13,0.47) -- (\z*0.2-0.05,0.47);
		\fi
	}
	\end{scope}
\end{tikzpicture}
}
\newcommand{\bivinci}[4]{
\begin{tikzpicture}[baseline, scale = 0.8, inner sep = 0mm]

	\begin{scope}[yshift = -2]
	
	\foreach \x/\y in {#2}
	{
		\node at (\x*0.2,0.15)   [label=$\scriptstyle{\x}$] {};  
		\node at (\x*0.2,-0.14)  [label=$\scriptstyle{\y}$] {};  
	}
	
	\foreach \z in {#3}
	{
		\ifnum 0<\z
			\ifnum \z<#1
				\draw[thick] (\z*0.2-0.07,-0.17) -- (\z*0.2+0.27,-0.17);
			\fi
		\fi
		
		\ifnum 0=\z
			\draw[thick] (0.08,0.1) -- (0.08,-0.17) -- (0.26,-0.17);
		\fi
		
		\ifnum \z=#1
			\draw[thick] (\z*0.2+0.13,0.1) -- (\z*0.2+0.13,-0.17) -- (\z*0.2-0.05,-0.17);
		\fi
	}
	
	\foreach \z in {#4}
	{
		\ifnum 0<\z
			\ifnum \z<#1
				\draw[thick] (\z*0.2-0.07,0.47) -- (\z*0.2+0.27,0.47);
			\fi
		\fi
		
		\ifnum 0=\z
			\draw[thick] (0.08,0.21) -- (0.08,0.47) -- (0.26,0.47);
		\fi
		
		\ifnum \z=#1
			\draw[thick] (\z*0.2+0.13,0.21) -- (\z*0.2+0.13,0.47) -- (\z*0.2-0.05,0.47);
		\fi
	}
	\end{scope}
\end{tikzpicture}
}
\begin{document}

\title{Which Schubert varieties are local complete intersections?}
\subjclass[2010]{14M15; 05A05, 05E15, 05E40, 14M10, 20F55}

\author[\'Ulfarsson]{Henning \'Ulfarsson}
\author[Woo]{Alexander Woo}

\address[\'Ulfarsson]{School of Computer Science, Reykjav\'ik University, Menntavegi 1, 101 Reykjav\'ik, Iceland}
\address[Woo]{Department of Mathematics, University of Idaho, PO Box 441103, Moscow, ID 83844-1103, United States of America}

\email{henningu@ru.is, awoo@uidaho.edu}


\date{\today}
\begin{abstract}

We characterize by pattern avoidance the Schubert varieties for
$\mathrm{GL}_n$ which are local complete intersections (lci).  For
those Schubert varieties which are local complete intersections, we
give an explicit minimal set of equations cutting out their
neighborhoods at the identity.  Although the statement of our
characterization only requires ordinary pattern avoidance, showing
that the Schubert varieties not satisfying our conditions are not lci
appears to require working with more general notions of pattern
avoidance.  The Schubert varieties defined by inclusions, originally
introduced by Gasharov and Reiner, turn out to be an important
subclass, and we further develop some of their combinatorics.
Applications include formulas for Kostant polynomials and
presentations of cohomology rings for lci Schubert varieties.

\end{abstract}
\maketitle

\setcounter{tocdepth}{1}
\tableofcontents

\section{Introduction}
The main purpose of this paper is to characterize the Schubert
varieties which are local complete intersections.

Let $G=\mathrm{GL}_n(\mathbb{C})$ and $B$ a Borel subgroup, which we
take to be the upper triangular matrices.  The quotient $G/B$ is a
projective variety known as the {\bf flag variety}; its points
correspond to {\bf complete flags}, which are chains of subspaces
$F_\bullet=\langle0\rangle\subsetneq F_1\subsetneq \cdots\subsetneq
F_{n-1}\subsetneq \mathbb{C}^n$ with $\dim F_i=i$ for all $i$.  The group
$G$, and hence its subgroup $B$, acts on $G/B$ by left multiplication.
Given a permutation $w$, the {\bf Schubert variety} $X_w$ is the closure of
the orbit $BwB/B$ of the permutation matrix for $w$ under the action
of $B$.

A local ring $R$ is a {\bf local complete intersection} ({\bf lci}) if
it is the quotient of some regular local ring by an ideal generated by
a regular sequence.  A variety (or, more generally, a scheme) is lci
if every local ring is lci.  Since regular local rings are
automatically lci, smooth varieties are automatically lci.
Furthermore, lci varieties are automatically Gorenstein and hence
Cohen--Macaulay.  Thus, being lci can be viewed as saying that the
singularities are in some sense mild.

Following earlier work of Wolper~\cite{Wol} and Ryan~\cite{Ryan},
Lakshmibai and Sandhya~\cite{LakSan} found to some amazement at the time
that smoothness of the Schubert variety $X_w$ can be characterized by
the combinatorial notion of {\bf pattern avoidance}.  A permutation
$v\in S_m$ {\bf embeds} in $w\in S_n$ if there are some $m$ entries of $w$,
say at indices $i_1<\cdots<i_m$, in the relative order given by $v$,
meaning that $w(i_j)<w(i_k)$ if and only if $v(j)<v(k)$.  If $v$ does
not embed in $w$, then $w$ is said to {\bf avoid} $v$.  Lakshmibai and
Sandhya showed that $X_w$ is smooth if and only if $w$ avoids both of
the permutations 3412 and 4231 (written in $1$-line notation).

More recently, Yong and the second author characterized the
permutations $w$ for which $X_w$ is Gorenstein~\cite{WYGor}.  This
characterization cannot be given purely in terms of pattern avoidance
but requires a more complicated generalization, either {\bf interval
pattern avoidance} (called Bruhat-restricted pattern avoidance in the
original) or alternatively {\bf bivincular patterns} as explained
in \cite{Ulf}.  However, the lci Schubert varieties can be
characterized by ordinary pattern avoidance.  More precisely, we prove
the following theorem.

\begin{theorem}
\label{thm:main}
The Schubert variety $X_w$ is lci if and only if $w$ avoids the six
patterns $53241$, $52341$, $52431$, $35142$, $42513$, and $426153$.
\end{theorem}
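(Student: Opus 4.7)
The proof naturally splits into two directions, of which the harder is the forward implication that containing one of the six patterns forces $X_w$ not to be lci.

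For the easier direction, that avoiding all six patterns implies $X_w$ is lci, my plan is to work at the identity coset $eB$ and use a Kazhdan--Lusztig style local model. The opposite big cell $\Omega^e \cong \mathbb{A}^{\binom{n}{2}}$ is an affine neighborhood of $eB$ in $G/B$, and $N_w := X_w \cap \Omega^e$ is cut out scheme-theoretically by the vanishing of certain minors prescribed by the essential set of $w$. Since $X_w$ is Cohen--Macaulay of codimension $\binom{n}{2}-\ell(w)$ at the identity, being lci at $eB$ is equivalent to exhibiting a generating set of exactly $\binom{n}{2}-\ell(w)$ equations for the ideal of $N_w$, which then automatically forms a regular sequence. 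Here the Schubert varieties defined by inclusions of Gasharov--Reiner play a central role: for this subclass the essential-set equations simplify drastically and an explicit regular sequence can be written down. My strategy is to show that avoidance of the six patterns forces the essential set of $w$ to behave locally (near each essential box) like one of these ``inclusion'' shapes, so that the local minors can be reorganized into an explicit minimal set of generators. To pass from lci at $eB$ to lci at every $T$-fixed point $vB$, I would invoke $T$-equivariance together with interval pattern avoidance: for each $v \le w$ there is an interval-pattern isomorphism $[v,w] \cong [e,w']$ with $w'$ again avoiding the six patterns, reducing the question at $vB$ in $X_w$ to the question at $eB$ in $X_{w'}$ already handled.

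For the harder direction, that containing one of the six patterns implies $X_w$ is not lci, my plan proceeds in two steps. Step one is a finite base case: verify directly for each of $53241$, $52341$, $52431$, $35142$, $42513$, $426153$ that the corresponding Schubert variety is not lci at the identity, by computing the Kazhdan--Lusztig ideal in $\Omega^e$, checking its codimension, and showing that it cannot be generated by that many elements (for example by counting minimal generators of the tangent cone or computing a Hilbert series that is incompatible with Gorenstein symmetry). Step two is to upgrade this to arbitrary $w$ containing one of these patterns. This is the step flagged in the abstract as requiring more than ordinary pattern avoidance: the embedding of a pattern $v$ into $w$ gives no direct Bruhat-interval embedding, so I cannot simply quote an interval-pattern isomorphism. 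Instead, for each of the six patterns $v$ and each embedding of $v$ into $w$, I will construct an explicit element $u \in S_n$ with $u \le w$ such that the singularity of $X_w$ at $uB$ matches the singularity of $X_v$ at $eB$ up to smooth factor. Formally, this amounts to exhibiting an interval pattern embedding (equivalently a bivincular pattern embedding) $[e,v] \hookrightarrow [u,w]$, for which I would verify the required covering and Bruhat-order relations pattern-by-pattern, possibly reducing to a small number of bivincular witnesses per forbidden pattern.

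The main obstacle is precisely this second step. Since ordinary pattern containment is strictly weaker than interval pattern containment, one must do real combinatorial work to promote each ordinary embedding $v \hookrightarrow w$ into an interval embedding of $[e,v]$ into some $[u,w]$. The bulk of the argument will go into constructing $u$ and verifying its Bruhat relations for each of the six patterns, so I expect the forward direction to occupy most of the proof, with the lci verification for the avoiding $w$ following comparatively cleanly once the local model at $eB$ has been pinned down via the Gasharov--Reiner inclusion framework.
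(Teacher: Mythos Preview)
Your proposal has the right overall architecture but contains a genuine gap in each direction.

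\textbf{Sufficiency.} Your reduction from an arbitrary $T$-fixed point $e_v$ to $e_{\id}$ is both unnecessary and unjustified. You claim that for each $v\le w$ there is an interval-pattern isomorphism $[v,w]\cong[\id,w']$ with $w'$ again avoiding the six patterns; no such general isomorphism exists (intervals $[v,w]$ need not be isomorphic to any $[\id,w']$), and even when they are, there is no reason $w'$ should inherit the avoidance. The paper's argument is much simpler: the non-lci locus of $X_w$ is closed and $B$-invariant, hence a union of Schubert subvarieties, so if nonempty it contains $e_{\id}$. Thus checking lci at $e_{\id}$ alone suffices. Apart from this, your plan to find $\binom{n}{2}-\ell(w)$ generators via the Gasharov--Reiner framework matches the paper, though the paper makes precise the intermediate notion (``almost defined by inclusions'') that bridges general avoiders and the defined-by-inclusions case.

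\textbf{Necessity.} Your Step~2 is where the real problem lies. You propose, for each of the six patterns $v$, to promote an ordinary embedding $v\hookrightarrow w$ to an interval embedding $[\id,v]\hookrightarrow[u,w]$. This cannot work: take $w=632541$, which contains $53241$ (at positions $1,2,3,5,6$) and $52431$ (at positions $1,3,4,5,6$). For every choice of five embedding positions the resulting $u$ has length $1$ or $5$, whereas an interval embedding would require $\ell(u)=\ell(w)-\ell(v)\in\{3,4,6\}$; one checks exhaustively that no embedding of any of the six patterns yields the required length. So $w=632541$ admits no interval embedding of $[\id,v]$ for any of the six $v$, yet $X_w$ is not lci. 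The paper resolves this by using a much larger witness set: two \emph{infinite} families of intervals (generic singularities of types $52431$ and $35142$) together with eleven sporadic intervals, and then proves---using marked mesh patterns rather than case analysis on a single embedding---that containment of any of the six classical patterns forces interval-containment of one of these witnesses. The infinitude of the witness families is essential and is exactly why the abstract says the necessity direction ``appears to require working with more general notions of pattern avoidance.'' Your finite list of six intervals $[\id,v]$ is not enough.
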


For convenience we work over $\mathbb{C}$ in this paper, but our
results and proofs hold over $\mathbb{Z}$ and hence over any field.

Our proof for this theorem carries out the general strategy for any
local property suggested by the work of Yong and the second
author~\cite{WYGov}.  Let $\Omega_v$ denote the {\bf opposite Schubert
  cell}, which is the orbit $B_-vB/B$ of the permutation matrix $v$
under the opposite Borel group $B_-$ of lower triangular matrices.
Furthermore, let $\mathcal{N}_{v,w}$ denote $\Omega_v\cap X_w$.  It is
a lemma of Kazhdan and Lusztig~\cite[Lemma A.4]{KazLusLem} that the
point $vB/B$ given by a permutation matrix $v$ has a neighborhood in
$X_w$ which isomorphic to $\mathcal{N}_{v,w}\times
\mathbb{C}^{\ell(v)}$.

For a permutation $w$ avoiding the six patterns, we study explicit
equations for $\mathcal{N}_{\id,w}$ as a subvariety of $\Omega_{\id}$
and explicitly find $\codim(X_w)$ generators for its defining ideal,
hence showing that $X_w$ is lci at the identity.  This suffices since
the locus of non-lci points on any scheme is closed; since this locus
on a Schubert variety is $B$-invariant, it must therefore be a union
of Schubert subvarieties and hence include the identity.  We identify
these generators based on the combinatorics of the {\bf essential set}
of $w$, which was originally defined by Fulton~\cite{FulMSV} to give a
minimal set of generators for the ideal defining a matrix Schubert
variety.  The combinatorics of the essential set were later further
studied by Eriksson and Linusson~\cite{ErikLin}.

To show that a permutation containing one of the six patterns is not
lci, we first identify two infinite familes and eleven isolated pairs
$(u,v)$ such that $\mathcal{N}_{u,v}$ is not lci.  The two infinite
families are generic in the singular locus and were identified
independently by Manivel~\cite{Man} and by Cortez~\cite{Cor}.  Now a
theorem of Yong and the second author~\cite[Cor.\ 2.7]{WYGov} implies
that if $w$ fails to interval avoid any of these pairs $[u,v]$, then
$w$ will not be lci.

We then show that any permutation containing one of our six forbidden
patterns must actually contain an interval either from one of the
infinite families or from our list of eleven isolated cases.  To
formulate this proof, we must first translate both sets of avoidance
conditions into {\bf marked mesh patterns}, previously defined by the
first author~\cite{Ulf}.  Marked mesh patterns generalize mesh
patterns, which were introduced in full generality by Br\"and\'en and
Claesson~\cite{BraCla}, although special cases had previously been
implicitly used, for example in the determination of the singular
locus of Schubert varieties carried out independently by
Manivel~\cite{ManLocus}, Kassel, Lascoux and Reutenauer~\cite{KLR},
Cortez~\cite{Cor}, and Billey and Warrington~\cite{BilWar}. The
original motivation for defining mesh patterns was to write various
permutation statistics as linear combinations of permutation
patterns. They have since been shown to characterize the permutations
satisfying various properties.  For example, the {\bf simsun}
permutations, introduced in~\cite{SimSun} and later named after Simion
and Sundaram, are characterized by the avoidance of the mesh pattern
\[
\pattern{scale=1}{ 3 }{ 1/3, 2/2, 3/1 }{1/0, 1/1, 1/2, 2/0, 2/1, 2/2}.
\]
Furthermore, it is not
hard to see that every interval pattern can be described as a mesh pattern,
as shown by the first author~\cite[Lemma 22]{Ulf}. Marked mesh patterns are similar to mesh patterns but allow more control over the number of elements occupying a particular region in the graph of a permution.

Another related result is the characterization of Schubert varieties
which are {\bf defined by inclusions}, due to Gasharov and
Reiner~\cite{GasRei}.  They show that $X_w$ is defined by inclusions
if $w$ avoids 4231, 35142, 42513, and 426153.  As one can tell from
the patterns involved, our theorem implies that Schubert varieties
defined by inclusions are lci, which was previously unknown.  Indeed,
the Schubert varieties defined by inclusions turn out to be an
important special case in proving the sufficiency of our pattern
avoidance conditions.  In particular, we use the essential set to
canonically associate a permutation defined by inclusions to any
permutation indexing an lci Schubert variety.  However, unlike the
property of being lci, which is entirely intrinsic to the Schubert
variety, it appears from the definition that whether a Schubert
variety is defined by inclusions or not may depend on its embedding in
the flag variety.  It is not known if there is some intrinsic
geometric characterization of being defined by inclusions.

More recently, Hultman, Linusson, Shareshian, and
Sj\"ostrand~\cite{HLSS} showed that, given a permutation $w$, the
number of chambers in the inversion arrangement for $w$ is equal to
the number of permutations less than or equal to $w$ in Bruhat order
if and only if $w$ avoids the same patterns 4231, 35142, 42513, and
426153.  This follows earlier work by Sj\"ostrand~\cite{SjosRH}
showing that the lower interval of $w$ in Bruhat order corresponds to
rook configurations on a skew partition known as the {\bf right hull}
of $w$ if and only if $w$ avoids the same patterns.  The connection
between these results and that of Gasharov and Reiner is at present a
complete mystery.

Hultman~\cite{Hul} has extended this result to other finite reflection
groups, but his characterization is in terms of a condition on the
Bruhat graph rather than pattern avoidance conditions.  We hope that
this new result may help in finding a generalization of our theorem to Schubert
varieties for other semisimple Lie groups.  However, while there is a
generalization of interval pattern avoidance for these other Schubert
varieties~\cite{WooIPA}, explicit equations for the analogues of
$\mathcal{N}_{u,v}$ are not generally known.  Alternatively, it may be
possible to give a characterization of lci Schubert varieties in terms
of the Bruhat graph (possibly with some extra data in the
non-simply-laced cases) rather than pattern avoidance.

One could hope to determine explicitly the (non-)lci locus of any
Schubert variety.  We conjecture that our list of interval patterns
fully specifies the non-lci locus.  In principle, this conjecture (or
its correct version) can be proven by identifying explicit generators
for all of the lci Kazhdan--Lusztig varieties $\mathcal{N}_{x,w}$ just
as we do here for the case where $x$ is the idenity and $w$ avoids the
given patterns.  However, at least at first glance, the amount of
combinatorial analysis required seems daunting.  Another possible
extension of our work would be to identify, for each $k>0$, those
Schubert varieties which fail to be lci by at most $k$ excess
generators (for the ideal generating $\mathcal{N}_{\id,w}$).  It would
be interesting to know if this property is characterized by pattern
avoidance, and furthermore avoidance of a finite number of patterns,
for all $k$.

We remark on several further applications of our results.  First we
point out how various implications between properties of singularities
can be derived purely combinatorially on Schubert varieties by
containment of patterns.  Also, we can recover results on lci matrix
Schubert varieties due to Hsiao~\cite{Hsiao}.  Furthermore, our proof
of sufficiency gives explicit equations for $\mathcal{N}_{\id,w}$ when
$w$ is lci.  We describe two applications of this result.  First, we
give explicit formulas of the Kostant--Kumar
polynomials~\cite{KosKumCoh,KosKumK} for both cohomology and
$K$-theory at the identity (which are equivalent to certain specific
specializations of the double Schubert and Grothendieck polynomials of
Lascoux and Sch\"utzenberger~\cite{LSSchub,LSGroth}) in the case where
$w$ is lci.  This calculation in the smooth case recovers a small part
of a result of Kumar~\cite{KumSmooth} characterizing smooth points on
Schubert varieties using the Kostant--Kumar polynomials and suggests a
possible similar characterization of lci points as well as a potential
local definition for being defined by inclusions.  Second, we use the
result of Akyildiz, Lascoux, and Pragacz~\cite{ALP} identifying the
cohomology ring $H^*(X_w)$ with a particular quotient of
$\mathcal{N}_{\id,w}$ to extend the presentation of $H^*(X_w)$ in the
case $X_w$ is defined by inclusions, due to Gasharov and
Reiner~\cite{GasRei}, to a presentation of $H^*(X_w)$ for all lci
Schubert varieties.

Furthermore, there has been some recent renewed interest in lci
varieties in the context of jet schemes spurred by Mustata's
theorem~\cite{Mus} that an lci variety has an irreducible jet scheme
if and only if the variety has canonical singularities.  The lci
Schubert varieties should provide a useful class of examples for
understanding and possibly extending this theorem since they have a
well understood resolution of singularities, the Bott--Samelson
resolution~\cite{BottSam}.  (This is only a resolution of
singularities in a weak sense since the image of the exceptional
locus contains nonsingular points.)  The jet schemes for the special
case of determinantal varieties has already been worked out by
Mustata~\cite{Mus}, Yuen~\cite{Yuen}, Kosir and
Setharuman~\cite{KosSet}, and Docampo~\cite{Doc}.

In addition, Anderson and Stapledon~\cite{AndSta} have recently
developed an interpretation of classes in equivariant cohomology of a
smooth variety $X$ as representing subvarieties of the jet scheme
$J_\infty X$.  They show that, in the case of equivariant local
complete intersections in $X$ as well as other special cases,
multiplication of classes corresponds to transverse intersection of
subvarieties on the jet scheme.  Our proof shows that the lci Schubert
varieties are equivariant local complete intersections; hence they
provide an interesting class of examples for their theory.

Our paper is organized as follows.  Section 2 gives definitions and
basic facts about lci varieties, Schubert varieties, equations
defining Schubert varieties, and various notions of pattern avoidance.
In Section 3, we prove some combinatorial results on the essential
sets of permutations which are defined by inclusions as well as
permutations avoiding our given patterns.  Some of these results may
be of independent interest.  Section 4 proves that permutations
avoiding our given patterns are lci using the combinatorics of Section
3.  Section 5 proves that permutations including our given patterns
are not lci.  We describe various applications in Section 6 and pose
a number of open questions in Section 7.

The first author thanks Einar Steingr\'imsson for enthusiastic support and
discussions. He also thanks Emil{\'i}a Halld{\'o}rsd{\'o}ttir for always
believing in him.
For the duration of this project, the first author was employed by Reykjav\'ik
University and supported by grant no.\ 090038011--3 from the Icelandic Research Fund.

The second author thanks Alexander Yong for help with the computations
which led to this project, Vic Reiner for encouragement and helpful
suggestions, and Axel Hultman and Winfried Bruns for answers to
technical questions.  He also thanks Kaisa Taipale for company during
several ``antisocial writing'' sessions during which a significant
portion of this paper was written.  For most of the duration of this
project, the second author was employed by Saint Olaf College, which
provided travel funding and other support that made this work
possible.

\section{Definitions}
Throughout this paper we will use the notation $\dbrac{a,b}$ to denote the set
$$\dbrac{a,b}:=\{x\in\mathbb{Z}\mid a\leq x\leq b\}.$$ In particular,
$\dbrac{a,a}=\{a\}$, and $\dbrac{a,b}=\emptyset$ when $b<a$.  Also, $\#S$ will denote the number of elements in $S$.

\subsection{Local complete intersections} 

Let $R$ be a local ring.  The ring $R$ is called a {\bf local complete
  intersection} ({\bf lci}) if there exists a regular local ring $S$ (meaning one
where, letting $m$ be the maximal ideal of $S$, the dimension of
$m/m^2$ as an $S/m$ vector space is the same as the Krull dimension of
$S$) and an ideal $I$ generated by a regular sequence on $S$ such that $R=S/I$.  Regular rings
are trivially local complete intersections, and, by the Kozsul resolution, local complete
intersections are Gorenstein and hence Cohen-Macaulay.  It turns out
that the choice of regular ring $S$ is irrelevant; if $R\cong S/I$
with $R$ a local complete intersection and $S$ any regular local ring, then $I$ will
always be generated by a regular sequence.  Furthermore, whether $R$
is or is not a local complete intersection can be detected purely by
using the $\mathrm{Ext}$ functor on $R$ and its residue field $k$.  For details
and other basic facts about local complete intersections, see the book
by Bruns and Herzog~\cite{winfried1998cohen}.

An algebraic variety or scheme $X$ is called a {\bf local complete
  intersection} ({\bf lci}) if for each point $p\in X$, the local ring
$\mathcal{O}_{X,p}$ is a local complete intersection.  For any
variety, the locus $V$ of points $p$ for which $\mathcal{O}_{X,p}$ is
{\it not} lci is a {\it closed} set (in the Zariski topology).

Note that if $S=\mathbb{C}[x_1,\ldots,x_n]$, $I$ is an ideal of $S$
generated by $k$ elements, and $\dim \Spec S/I=n-k$, then $\Spec S/I$
is automatically lci, as localization can never increase the number of
generators needed for an ideal.

\subsection{Schubert varieties}

Let $G=\mathrm{GL}_n(\mathbb{C})$, which we think of explicitly as the group of
invertible $n\times n$ matrices, and let $B$, $B_-$, and $T$ denote
respectively the subgroups of invertible upper triangular, lower
triangular, and diagonal matrices.  The {\bf flag variety} is the
quotient space $G/B$; upon a choice of basis for $\mathbb{C}^n$, a
point $gB\in G/B$ is naturally identified with a {\bf flag} $F_\bullet:
\langle 0\rangle\subsetneq F_1\subsetneq\cdots\subsetneq
F_{n-1}\subsetneq\mathbb{C}^n$ by taking $F_i$ to be the span of the
first $i$ columns of any coset representative $g$ of $gB$.

Let $w\in S_n$ be a permutation.  We think of $w$ as a permutation
matrix with $1$'s at row $w(j)$ (counted from the top) and column $j$
for each $j$ and with $0$'s everywhere else.  We let $e_w$ denote the {\bf Schubert point} which is the coset $wB\in G/B$.  The orbit $BwB/B\subset G/B$
is known as a {\bf Schubert cell}, and its closure
$X_w=\overline{BwB/B}\subset G/B$ is a {\bf Schubert variety}.  The
orbit $B_-wB/B\subset G/B$ is known as an {\bf opposite Schubert cell}
and denoted $\Omega^\circ_w$.  Our conventions are such that $X_{\id}$
is a point and $X_{w_0}$ (where $w_0$ is the long permutation defined by
$w_0(i)=n+1-i$ for all $i$) is all of $G/B$.

\subsection{Rank conditions for Schubert varieties}

Schubert varieties can be alternatively defined as the set of points
representing flags satisfying certain intersection conditions with the
standard flag or equivalently as the set of $B$-cosets with representatives
satisfying certain rank conditions on southwest submatrices.  For a
permutation $w$, define the {\bf rank function} $r_w$ by
\[
r_w(p,q)=\#\{k\leq q\mid w(k)\geq p\}.
\]
Let $E_\bullet$ be the flag
where $E_p$ is the span of the first $p$ standard basis vectors; this
flag is known as the {\bf standard flag}. A flag $F_\bullet$ represents
a point $gB$ in the Schubert variety $X_w$ if and only if
$\dim(E_p\cap F_q)\geq q-r_w(p+1,q)$ for all $p,q\in \dbrac{1,n}$.
This is equivalent to the rank of the southwest $(n+1-p)\times q$
submatrix (consisting of the $n+1-p$ bottommost rows and $q$ leftmost
columns) of any coset representative $g$ of $gB$ being at most
$r_w(p,q)$ for all $p$ and $q$.

Many of these rank conditions are redundant, and Fulton~\cite{FulMSV}
showed that the minimal set of conditions defining any Schubert
variety are those from what he called the {\bf essential set}.  The
{\bf Rothe diagram} of $w$ is the set of boxes (which we can think of
as being drawn over the permutation matrix)
\[
\{(p,q)\in\dbrac{1,n}\times\dbrac{1,n}\mid w(q)<p, w^{-1}(p)>q\}.
\]
The diagram can
be described visually as follows.  For each $q\in\dbrac{1,n}$, draw
a dot $\bullet$ at $(w(q),q)$.  For each dot draw the ``hook'' that
extends north and east of that dot.  The boxes not in any hook are the
boxes of the diagram.  The {\bf essential set} $E(w)$ is the set of
boxes in $D(w)$ which are northeast corners in some connected
component of $D(w)$.  To be precise,
\[
E(w)=\{(p,q)\in D(w) \mid (p,q+1)\not\in D(w), (p-1,q)\not\in
D(w)\},
\]
and a matrix $g$ represents a point $gB\in X_w$ if and only
if the southwest $(n+1-p)\times q$ submatrices of $g$ have rank at most
$r_w(p,q)$ for all $(p,q)\in E(w)$.  Furthermore, $E(w)$ is the
minimal subset of $\dbrac{1,n}\times \dbrac{1,n}$ with this property; no subset of
$E(w)$ will correctly define $X_w$.

\begin{warning}
There are a number of different conventions for the essential set in
the literature.  In particular, our convention is different from the
original one used by Fulton~\cite{FulMSV} and is known in some sources
as the coessential set.
\end{warning}

\begin{example}
\label{ex:ess-819372564}
Let $w=819372564$.  Then the diagram and essential set of $w$ are as
in Figure~\ref{fig:ess-819372564}.  In particular, $E(w)=\{(2,2),
(4,4), (4,6), (6,7), (9,2)\}$.\qed

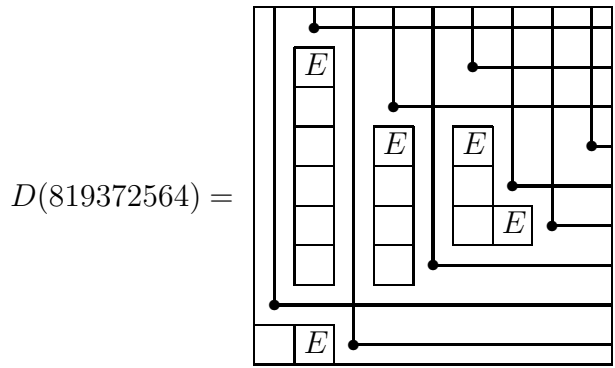
\begin{figure}[htbp]
\[\begin{picture}(200,140)
\put(-55,60){$D(819372564)=$}
\put(37.5,0){\framebox(135,135)}
\put(37.5,15){\line(1,0){30}}
\put(52.5,15){\line(0,-1){15}}
\put(67.5,15){\line(0,-1){15}}
\put(52.5,30){\line(0,1){90}}
\put(67.5,30){\line(0,1){90}}
\put(52.5,30){\line(1,0){15}}
\put(52.5,45){\line(1,0){15}}
\put(52.5,60){\line(1,0){15}}
\put(52.5,75){\line(1,0){15}}
\put(52.5,90){\line(1,0){15}}
\put(52.5,105){\line(1,0){15}}
\put(52.5,120){\line(1,0){15}}
\put(82.5,30){\line(0,1){60}}
\put(97.5,30){\line(0,1){60}}
\put(82.5,30){\line(1,0){15}}
\put(82.5,45){\line(1,0){15}}
\put(82.5,60){\line(1,0){15}}
\put(82.5,75){\line(1,0){15}}
\put(82.5,90){\line(1,0){15}}

\put(112.5,45){\line(0,1){45}}
\put(127.5,45){\line(0,1){45}}
\put(142.5,45){\line(0,1){15}}
\put(112.5,45){\line(1,0){30}}
\put(112.5,60){\line(1,0){30}}
\put(112.5,75){\line(1,0){15}}
\put(112.5,90){\line(1,0){15}}

\thicklines
\put(45,22.5){\circle*{4}}
\put(45,22.5){\line(1,0){127.5}}
\put(45,22.5){\line(0,1){112.5}}

\put(60,127.5){\circle*{4}}
\put(60,127.5){\line(1,0){112.5}}
\put(60,127.5){\line(0,1){7.5}}

\put(75,7.5){\circle*{4}}
\put(75,7.5){\line(1,0){97.5}}
\put(75,7.5){\line(0,1){127.5}}

\put(90,97.5){\circle*{4}}
\put(90,97.5){\line(1,0){82.5}}
\put(90,97.5){\line(0,1){37.5}}

\put(105,37.5){\circle*{4}}
\put(105,37.5){\line(1,0){67.5}}
\put(105,37.5){\line(0,1){97.5}}

\put(120,112.5){\circle*{4}}
\put(120,112.5){\line(1,0){52.5}}
\put(120,112.5){\line(0,1){22.5}}

\put(135,67.5){\circle*{4}}
\put(135,67.5){\line(1,0){37.5}}
\put(135,67.5){\line(0,1){67.5}}

\put(150,52.5){\circle*{4}}
\put(150,52.5){\line(1,0){22.5}}
\put(150,52.5){\line(0,1){82.5}}

\put(165,82.5){\circle*{4}}
\put(165,82.5){\line(1,0){7.5}}
\put(165,82.5){\line(0,1){52.5}}

\put(56,4.5){$E$}

\put(56,109.5){$E$}
\put(86,79.5){$E$}
\put(116,79.5){$E$}
\put(131,49.5){$E$}

\end{picture}\]
\caption{\label{fig:ess-819372564} Diagram and essential set for $w=819372564$.}
\end{figure}
\end{example}

\subsection{Local neighborhoods, Kazhdan--Lusztig varieties, and explicit equations} \label{ss:loc-KL-ee}

We now explain how local properties of $X_w$ can be explicitly
calculated.  The contents of this section can be found in greater
detail (and with proofs) in~\cite[Section 3.2]{WYGov}, and the ideas
behind it can partially be traced back to Fulton~\cite{FulMSV}.

The opposite Schubert cell $\Omega^\circ_{\id}\subset G/B$ is an open
affine neighborhood of $e_{\id}$, and, given any $v$,
$v\Omega^\circ_{\id}$ is an affine neighborhood of the Schubert point
$e_v$.  Since $B$ acts on any Schubert variety $X_w$, it suffices to
study one point in each $B$-orbit, which we take to be the Schubert
point $e_v$, so it suffices to study $X_w\cap v\Omega^\circ_{\id}$.
Moreover, by~\cite[Lemma A.4]{KazLusLem},
\[
X_w\cap v\Omega^\circ_{\id}\cong (X_w\cap\Omega^\circ_v)\times\mathbb{C}^{\ell(v)}.
\]
Hence, to check if $X_w$ is locally a complete intersection at $e_v$, it suffices to study the {\bf Kazhdan--Lusztig variety}
\[
\mathcal{N}_{v,w}:= X_w\cap\Omega^\circ_v.
\]

We now give equations which define $\mathcal{N}_{v,w}$
scheme-theoretically.  Given a permutation $v$, let $S_v$ be the
polynomial ring whose variables are labelled by the boxes in the
diagram of $v$, so $S_v=\mathbb{C}[z_{p,q}]_{(p,q)\in D(v)}$.
Furthermore, let $M_v$ be the matrix with a $1$ as the entry at
$(v(i),i)$ for each $i$, $z_{p,q}$ at $(p,q)\in D(v)$, and $0$'s
everywhere else.

For any subsets $A$ and $B$ of $\dbrac{1,n}$ such that both $A$ and $B$
have the same number of elements, let $d^{(v)}_{A,B}$ denote the
minor of $M_v$ which is the determinant of the square matrix whose
rows are the rows of $M_v$ indexed by elements of $A$ and whose
columns are the columns of $M_v$ indexed by elements of $B$.  We will
refer to $d^{(v)}_{A,B}$ as a {\bf generalized Pl\"ucker coordinate}.

The ring $S_v$ has a grading where $\deg z_{p,q}=p-v(q)$.  Note that
$$\deg d^{(v)}_{A,B}=\sum_{p\in A} p-\sum_{q\in B} v(q).$$  Futhermore,
$d^{(v)}_{A,B}=0$ if $\deg d^{(v)}_{A,B}<0$, and
$d^{(v)}_{A,B}\in\{-1,0,1\}$ if $\deg d^{(v)}_{A,B}=0$.

Given $p,q,r\in\dbrac{1,n}$, let $I^{(v)}_{(p,q,r)}$ be the
ideal of $S_v$ generated by all $d^{(v)}_{A,B}$ where
$A\subseteq\dbrac{p,n}$, $B\subseteq\dbrac{1,q}$, and
$\#A=\#B=r+1$; these are all the $r+1$ size minors of the rectangular
submatrix consisting of all entries (weakly) SW of $(p,q)$.  Given a
permutation $w$, let
\[
I_{v,w}=\sum_{(p,q)\in E(w)}
I^{(v)}_{(p,q,r_w(p,q))}.
\]
The following is a restatement
of~\cite[Prop.~3.1]{WYGov}; this Proposition was first stated in a
less concise form in \cite{FulMSV}.

\begin{proposition}
\label{prop:alleqns}
The Kazhdan--Lusztig variety
\[
\mathcal{N}_{v,w}\cong\Spec S_v/I_{v,w}.
\]
\end{proposition}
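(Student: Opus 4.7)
The plan is to follow the parameterization of $\Omega^\circ_v$ by the matrix $M_v$ and translate the rank-based description of $X_w$ directly into vanishing conditions on generalized Pl\"ucker coordinates. First, I would verify that $\Omega^\circ_v \cong \Spec S_v$ as an affine space, with the coordinates $z_{p,q}$ indexed by $D(v)$: every coset in $B_-vB/B$ has a unique representative of the form $M_v$, since the $1$'s at positions $(v(i),i)$ together with the permitted free entries in $D(v)$ form a transversal to $B$ (the positions outside $D(v)$ are precisely those forced to be $0$ or pivoted out by right multiplication by elements of $B$).

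Next, using the rank characterization recalled in the previous subsection, a point $gB$ lies in $X_w$ if and only if the rank of the SW $(n+1-p)\times q$ submatrix of $g$ is at most $r_w(p,q)$ for all $(p,q)$, and by Fulton's theorem it suffices to impose this condition only at $(p,q)\in E(w)$. Applying this to $g=M_v$, the rank bound at $(p,q)$ is equivalent to the vanishing of every $(r_w(p,q)+1)\times (r_w(p,q)+1)$ minor of the SW submatrix, i.e., every $d^{(v)}_{A,B}$ with $A\subseteq\dbrac{p,n}$, $B\subseteq\dbrac{1,q}$, and $\#A=\#B=r_w(p,q)+1$. These are exactly the generators of $I^{(v)}_{(p,q,r_w(p,q))}$, so summing over $E(w)$ yields $I_{v,w}$, giving the set-theoretic identification $\mathcal{N}_{v,w} = V(I_{v,w})\subseteq\Spec S_v$.

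The main obstacle, and the reason for citing \cite{FulMSV} and \cite[Prop.~3.1]{WYGov}, is upgrading this from a set-theoretic equality to a scheme-theoretic isomorphism: one must know that the ideal $I_{v,w}$ is actually radical and cuts out the correct scheme structure, not merely the correct underlying set. I would invoke Fulton's theorem for matrix Schubert varieties, which says that the analogous minor ideal on the affine space of all $n\times n$ matrices is prime and defines the matrix Schubert variety $\overline{BwB}$ scheme-theoretically. Intersecting with the open subvariety corresponding to the $M_v$-parametrization preserves this scheme structure, since $\Omega^\circ_v \hookrightarrow \mathrm{Mat}_{n\times n}$ is a (locally closed) smooth embedding cut out by the conditions fixing the pivot positions. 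Thus the pullback of Fulton's minor ideal to $S_v$ is exactly $I_{v,w}$, and the quotient $S_v/I_{v,w}$ is the coordinate ring of $\mathcal{N}_{v,w}$ as claimed.
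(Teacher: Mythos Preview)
The paper does not supply its own proof of this proposition; it simply records it as a restatement of \cite[Prop.~3.1]{WYGov}, with the underlying ideas traced back to Fulton~\cite{FulMSV}. Your proposal accurately reconstructs the argument one finds in those references: parameterize $\Omega^\circ_v$ by the matrix $M_v$ to identify it with $\Spec S_v$, translate the southwest rank conditions defining $X_w$ into vanishing of the appropriate minors, and then appeal to Fulton's result that the determinantal ideal for a matrix Schubert variety is prime to pass from set-theoretic to scheme-theoretic equality. There is nothing to compare here beyond noting that your sketch is faithful to the cited sources.
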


We will be particularly interested in the special case where $v=\id$.
Hence, in the remainder of this paper, we will omit $v$ from our
notation in this case, so $S=S_v$, $I_w=I_{v,w}$,
$d_{A,B}=d^{(v)}_{A,B}$, and $I_{(p,q,r)}=I^{(v)}_{(p,q,r)}$.  Note
that, in this case, $\deg z_{p,q}=p-q$, and
$$\deg d_{A,B}=\sum_{p\in A} p-\sum_{q\in B} q.$$

\begin{warning}
In~\cite{WYGov}, the variables in matrices are indexed with $z_{p,q}$
being the variable in the $p$-th row counting from the bottom.  This
was done for partial compatibility with the conventions for matrix
Schubert varieties.  Since matrix Schubert varieties play only a
marginal role in this paper, we have abandoned that convention and
index our matrix variables in the usual way, with $z_{p,q}$ being the
entry in row $p$ (counting from the top) and column $q$ (counting from
the left).
\end{warning}

\begin{example} \label{ex:kleqns526314}
Let $v=215436$ and $w=526314$.  Then
\[
M_v=\begin{pmatrix}
0 & 1 & 0 & 0 & 0 & 0 \\
1 & 0 & 0 & 0 & 0 & 0\\
z_{3,1} & z_{3,2} & 0 & 0 & 1 & 0 \\
z_{4,1} & z_{4,2} & 0 & 1 & 0 & 0\\
z_{5,1} & z_{5,2} & 1 & 0 & 0 & 0\\
z_{6,1} & z_{6,2} & z_{6,3} & z_{6,4} & z_{6,5} & 1
\end{pmatrix}
\mbox{ and $D(w)=$}
\begin{picture}(120,60)
\put(7.5,-37.5){\framebox(90,90)}
\put(7.5,-22.5){\line(1,0){30}}
\put(22.5,-37.5){\line(0,1){15}}
\put(37.5,-37.5){\line(0,1){15}}
\put(22.5,-7.5){\framebox(15,30)}
\put(22.5,7.5){\line(1,0){15}}
\put(52.5,-7.5){\framebox(30,15)}
\put(67.5,-7.5){\line(0,1){15}}
\thicklines
\put(15,-15){\circle*{4}}
\put(15,-15){\line(1,0){82.5}}
\put(15,-15){\line(0,1){67.5}}
\put(30,30){\circle*{4}}
\put(30,30){\line(1,0){67.5}}
\put(30,30){\line(0,1){22.5}}
\put(45,-30){\circle*{4}}
\put(45,-30){\line(1,0){52.5}}
\put(45,-30){\line(0,1){82.5}}
\put(60,15){\circle*{4}}
\put(60,15){\line(1,0){37.5}}
\put(60,15){\line(0,1){37.5}}
\put(75,45){\circle*{4}}
\put(75,45){\line(1,0){22.5}}
\put(75,45){\line(0,1){7.5}}
\put(90,0){\circle*{4}}
\put(90,0){\line(1,0){7.5}}
\put(90,0){\line(0,1){52.5}}
\end{picture}.
\]

Therefore, $S_v=\mathbb{C}[z_{3,1},z_{3,2},z_{4,1},z_{4,2},z_{5,1},z_{5,2},z_{6,1},z_{6,2},z_{6,3},z_{6,4},z_{6,5}]$.  Also
\[
I_{v,w}=I^{(v)}_{(6,2,0)}+I^{(v)}_{(3,2,1)}+I^{(v)}_{(4,5,2)}.
\]

Now, $I^{(v)}_{(6,2,0)}=\langle z_{6,1},z_{6,2}\rangle$.  The generalized Pl\"ucker coordinate $d_{\{i,6\},\{1,2\}}\in I^{(v)}_{(6,2,0)}$ for all $i$, so
\[
I^{(v)}_{(6,2,0)}+I^{(v)}_{(3,2,1)}=\langle
z_{6,1},z_{6,2},z_{3,1}z_{4,2}-z_{3,2}z_{4,1},z_{3,1}z_{5,2}-z_{3,2}z_{5,1},z_{4,1}z_{5,2}-z_{4,2}z_{5,1}\rangle.
\]

Since $d_{\{4,5,6\},\{3,4,5\}}=-z_{6,5}$, and the only generalized
Pl\"ucker coordinates among the generators of $I^{(v)}_{(4,5,2)}$ that
are not either a multiple of $-z_{6,5}$ or in
$I^{(v)}_{(6,2,0)}+I^{(v)}_{(3,2,1)}$ are $d_{\{4,5,6\},\{1,3,4\}}$
and $d_{\{4,5,6\},\{2,3,4\}}$,
\begin{align*}
I_{v,w}=\langle z_{6,1}, z_{6,2}&, z_{3,1}z_{4,2}-z_{3,2}z_{4,1}, z_{3,1}z_{5,2}-z_{3,2}z_{5,1}, z_{4,1}z_{5,2}-z_{4,2}z_{5,1}, \\
& z_{6,5}, z_{4,1}z_{6,4}+z_{5,1}z_{6,3}(-z_{6,1}), z_{4,2}z_{6,4}+z_{5,2}z_{6,3}(-z_{6,2})\rangle.
\end{align*}

If we let $a=z_{3,1}$, $b=z_{3,2}$, $c=z_{4,1}$, $d=z_{4,2}$,
$e=-z_{6,3}$, $f=z_{5,1}$, $g=z_{5,2}$, and $h=z_{6,4}$, then
\[
\mathcal{N}_{v,w}\cong\Spec\mathbb{C}[a,b,c,d,e,f,g,h]/\langle
ad-bc, ag-bf, cg-df, ch-ef, dh-eg\rangle,
\]
which we can think of as the variety of rank 1 ``matrices''
\[
\begin{pmatrix}
a & b & \\
c & d & e \\
f & g & h
\end{pmatrix}
\]

This is not a local complete intersection, as it is a codimension 3
variety whose ideal requires 5 generators.\qed
\end{example}

\begin{example}
Let $w=819372564$, as in Example~\ref{ex:ess-819372564}, and let
$v=\id$.
Then
\[
I_w=I_{(2,2,1)}+I_{(4,4,2)}+I_{(4,6,3)}+I_{(6,7,3)}+I_{(9,2,0)}.
\]
A priori, $I_{(2,2,1)}$ is generated by the $\binom{8}{2}\binom{2}{2}$
generalized Pl\"ucker coordinates $d_{A,\{1,2\}}$ where $A$ is a
2-element subset of $\{2,\ldots,9\}$.  Also, $I_{(4,4,2)}$ is a priori
generated by $\binom{6}{3}\binom{4}{3}$ generalized Pl\"ucker
coordinates, $I_{(4,6,3)}$ is generated by $\binom{6}{4}\binom{6}{4}$,
$I_{(6,7,3)}$ by $\binom{7}{4}\binom{4}{4}$ (some of which are shared
with $I_{(4,6,3)}$), and $I_{(9,2,0)}=\langle d_{\{9\},\{1\}}=z_{9,1},
d_{\{9\},\{2\}}=z_{9,2}\rangle$.  However, in our proof that $X_w$ is
lci, we will see that only $\binom{n}{2}-\ell(w)=\#D(w)=16$ of these
generators are needed.\qed
\end{example}

\subsection{Pattern avoidance and generalizations}

As mentioned in the introduction we say that a permutation $v\in S_m$
{\bf embeds} in $w\in S_n$ (or $w$ {\bf contains} $v$) if there are
some $m$ entries of $w$, say at indices $i_1<\cdots<i_m$, in the
relative order given by $v$, meaning that $w(i_j)<w(i_k)$ if and only
if $v(j)<v(k)$.  If $v$ does not embed in $w$, then $w$ is said to
{\bf avoid} $v$. When we discuss a permutation as being embedded or
being avoided by another permutation we usually call it a ({\bf
classical}) {\bf pattern}. For example, the pattern $132$ has three
embeddings in the permutation $526413$, namely they are
$5\mathbf{264}13$, $5\mathbf{26}41\mathbf{3}$ and
$5\mathbf{2}6\mathbf{4}1\mathbf{3}$. Notice in particular that the
indices of an embedding need not be adjacent in the permutation. One
of the original motivations of studying permutations as patterns is
their relation to sorting algorithms in computer science. Probably the
earliest example of such an application is the characterization by
Knuth~\cite{Knu} of stack-sortable permutations as the ones avoiding
$231$. Since then, many extensions of classical patterns have been
introduced; the ones relevant here are those of interval patterns and
(marked) mesh patterns.

Interval patterns were introduced by Yong and the second author
in~\cite{WYGov} and, we now recall their definition. First recall that
the {\bf Bruhat order} on the symmetric group is the reflexive
transitive closure of the partial order defined by declaring $u$ to be
less than or equal to $v$ if $v = u s_{ij}$ and $\ell(v)
> \ell(u)$. Here $s_{ij}$ is the transposition that switches the
(not necessarily adjacent) positions $i$ and $j$, and $\ell(v)$ is the number of inversions in
the permutation $v$, or equivalently, the length of any reduced
expression for $v$ as a product of simple reflections $s_{i(i+1)}$,
called the {\bf Coxeter length} of $v$. We use the symbol
``$\leqslant$'' to denote the Bruhat order. Now, if $[u,v]$ and
$[x,w]$ are intervals in the Bruhat orders on $S_m$ and $S_n$
respectively, we say that $[u,v]$ ({\bf interval}) {\bf pattern embeds
in} $[x,w]$ if there is a common embedding consisting of indices $i_1
< \cdots < i_m$ of $u$ in $x$ and $v$ in $w$, such that the entries of
$x$ and $w$ outside of these indices agree, and additionally, the
intervals $[u,v]$ and $[x,w]$ are isomorphic posets. The motivation
for these patterns is that they govern any ``reasonable'' local
property, as shown by Yong and the second author~\cite{WYGov}.  Since,
given $u,v,w$, and the indices of the embedding, the permutation $x$
is automatically determined, we can omit $x$ in the notation.  Hence
we will abuse terminology to say that $[u,v]$ embeds in $w$ or that
$w$ avoids $[u,v]$ as appropriate.

Interval patterns are a special case of mesh patterns, which we now define. A {\bf mesh pattern} is a pair $(v,R)$ where $v$ is a permutation (classical pattern) from $S_m$ and $R$ is a subset of the square $\dbrac{0,m} \times \dbrac{0,m}$. An embedding of $(v,R)$ in a permutation $w$ is first of all an embedding of $v$ in $w$ in the usual sense, meaning indices $i_1 < \cdots < i_m$ such that the relative order of $w(i_1), \dotsc, w(i_m)$ is given by $v$. Equivalently, we have order-preserving bijections $\alpha, \beta \colon \dbrac{1,m} \to \dbrac{1,n}$ such that
\[\{ (\alpha(i), \beta(j)) \mid (i,j) \in G(v) \} \subseteq G(w),\]
where for any permutation $u$, $G(u)$ is defined to be the graph
\[G(u)=\{(i,u(i)): i\in\dbrac{1,n}\}\]
of $u$.  In addition, to be an embedding of $(v,R)$, we further require the following:
$$\mbox{If }(i,j) \in R\mbox{ then } R_{ij} \cap G(u)=\emptyset.$$
Here $R_{ij}$ is defined as the rectangle $\dbrac{\alpha(i)+1, \alpha(i+1)-1} \times
\dbrac{\beta(j)+1, \beta(j+1)-1}$, where, as a convention, we set $\alpha(0) = 0
= \beta(0)$ and $\alpha(m+1) = n+1 = \beta(m+1)$.

As a simple example,
consider the mesh pattern $(12, \{ (1,0),(1,1),(1,2) \})$ which can be
depicted as follows:
\[
\pattern{scale=1}{ 2 }{ 1/1, 2/2 }{1/0, 1/1, 1/2 }
\]
An occurrence of this mesh pattern in a permutation is a non-inversion (an occurrence of the classical pattern $12$) with the additional requirement that there is nothing in between the two elements in the occurrence.
See~\cite[Subsec.~4.1]{Ulf} for more examples. As one additional example we show how~\cite[Lemma 22]{Ulf} can be used to translate interval patterns into mesh patterns: Take for example the interval pattern $[14235, 45123]$. This can be translated into the mesh pattern
\[
\impattern{scale=1}{ 5 }{ 1/4, 2/5, 3/1, 4/2, 5/3 }
{ 1/1, 2/4, 3/2, 4/3, 5/5 }{ 4/3, 4/4, 3/2, 3/3, 3/4, 2/1, 2/2, 2/3, 2/4, 1/1, 1/2, 1/3 }
\]
(The white dots are not part of the mesh pattern; they only indicate the permutation $14235$ from the interval.)

The definition of marked mesh patterns given by the first author~\cite[Subsec.~4.1]{Ulf} extends the definition of mesh patterns and allows another kind of designated regions where a certain number of elements is required to be present.
We only review their definition via an example:
\begin{example}
To show that the marked mesh pattern
$\patternsbmm{scale=1}{ 3 }{ 1/1, 2/3, 3/2}{1/0,1/1,1/2,1/3}{2/2/4/3/\scriptscriptstyle{1}}{3/2/4/3/\scriptscriptstyle{1}}$
occurs in the permutation $526413$, we first need to find an occurrence of the
underlying classical pattern $132$. There are three such occurrences, as shown
below.
\[
\imopatternsbm{scale=1}{ 6 }{ 1/5, 2/2, 3/6, 4/4, 5/1, 6/3 }
{}{ 2/2, 3/6, 4/4 }{2/0,2/1,2/2,2/3,2/4,2/5,2/6}{3/4/7/6/{}}\qquad
\imopatternsbm{scale=1}{ 6 }{ 1/5, 2/2, 3/6, 4/4, 5/1, 6/3 }
{}{ 2/2, 3/6, 6/3 }{2/0,2/1,2/2,2/3,2/4,2/5,2/6}{3/3/7/6/{}}\qquad
\imopatternsbm{scale=1}{ 6 }{ 1/5, 2/2, 3/6, 4/4, 5/1, 6/3 }
{}{ 2/2, 4/4, 6/3 }{2/0,2/1,2/2,2/3,2/4,2/5,2/6, 3/0,3/1,3/2,3/3,3/4,3/5,3/6}{4/3/7/4/{}}
\]
However, only the middle occurrence of $132$ is an occurrence of the marked
mesh pattern since it is the only occurrence having \emph{at least} one dot in the
box marked with "$1$" in the pattern, as well as having no dots in the shaded vertical strip.\qed
\end{example}

\begin{warning}
Unfortunately, the customary conventions for writing a permutation in
a matrix (stemming ultimately from the conventions for matrix
multiplication) is upside down from the customary conventions for the
graph of a permutation (stemming ultimately from the conventions for
drawing the graph of a function).  Furthermore, the convention for
indexing entries in a matrix disagree with the cartesian convention
for indexing points on a graph.  Hence our conventions for drawing
mesh patterns are upside down from our conventions for writing
matrices and for drawing Rothe diagrams and essential sets.  Our
conventions for indexing regions in mesh patterns and regions in Rothe
diagrams and essential sets also disagree.
\end{warning}

\section{Rothe diagrams of lci permutations}
Prior to proving the sufficiency of our pattern avoidance conditions,
we need some detailed information on the diagrams and essential sets
of permutations avoiding the given patterns.  This information may be
of independent combinatorial interest.  We begin by studying the
special case of Schubert varieties {\bf defined by inclusions}, which
were introduced in a different context by Gasharov and
Reiner~\cite{GasRei}.  In particular, the essential sets for
permutations indexing these Schubert varieties satisfy certain
combinatorial conditions.  We weaken these conditions to define what
it means for a permutation to be {\bf almost defined by inclusions}
and show that every permutation avoiding the given patterns is almost
defined by inclusions.  To each permutation almost defined by
inclusions, we will associate by modifying the diagram a permutation
honestly defined by inclusions.

\subsection{Permutations defined by inclusions}

Let $w\in S_n$ be a permutation.  We say that $w$ is {\bf defined by
  inclusions} if, for each box $(p,q)\in E(w)$,
$q-r_w(p,q)=\min\{p-1,q\}$.  To explain the terminology, note that
this condition on the essential set is equivalent to the statement
that the intersection conditions defining the Schubert variety are all
of the form $E_{p-1}\subset F_q$ or $F_q\subset E_{p-1}$.  Gasharov
and Reiner proved the following theorem~\cite[Thm. 4.2]{GasRei}.

\begin{theorem}
\label{thm:dbidiagram}
The following are equivalent:
\begin{enumerate}
\item The Schubert variety $X_w$ is defined by inclusions.
\item For every box $(p,q)\in E(w)$, either
\begin{itemize}
\item[A:] there are no $1$'s in the permutation matrix $w$ SW of $(p,q)$
  (In other words, there is no $k$ such that $k\leq q$ and
  $w(k)\geq p$.); or
\item[B:] there are no $1$'s in the permutation matrix $w$ weakly NE of $(p,q)$
  (In other words, there is no $k$ such that $k>q$ and $w(k)<p$.)
\end{itemize}
\item The permutation $w$ avoids $4231$, $35142$, $42513$, and $351624$.
\end{enumerate}
\end{theorem}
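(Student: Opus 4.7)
The plan is to prove $(1)\Leftrightarrow(2)$ by unwinding definitions, and then $(2)\Leftrightarrow(3)$ combinatorially in both directions by contrapositive.

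For $(1)\Leftrightarrow(2)$, at an essential box $(p,q)$ the defining rank condition is $\dim(E_{p-1}\cap F_q)\geq q-r_w(p,q)$, and $q-r_w(p,q)=\#\{k\leq q:w(k)<p\}$ always lies in $\dbrac{0,\min(p-1,q)}$. The condition becomes $F_q\subseteq E_{p-1}$ precisely when $q-r_w(p,q)=q$, i.e.\ $r_w(p,q)=0$, which is condition A; and it becomes $E_{p-1}\subseteq F_q$ precisely when $q-r_w(p,q)=p-1$, i.e.\ all values $<p$ lie in positions $\leq q$, which is condition B. Intermediate values yield a genuine dimension constraint that is not an inclusion. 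Since $X_w$ is defined by inclusions exactly when every essential condition is of inclusion type, (1) and (2) coincide.

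For $(2)\Leftrightarrow(3)$, the direction ``pattern contained $\Rightarrow$ essential box fails both A and B'' proceeds as follows. For each of the four patterns I would exhibit a canonical box of $D(w)$ with both a 1 SW and a 1 NE, then argue that passing to the NE-most corner of its connected component in $D(w)$ preserves both witnesses and yields an essential box violating A and B. For instance, in a $4231$ occurrence at positions $i_1<i_2<i_3<i_4$ with values $w(i_1)>w(i_3)>w(i_2)>w(i_4)$, the box $(w(i_3),i_2)$ lies in $D(w)$, with the 1 at $(w(i_1),i_1)$ on its SW and the 1 at $(w(i_4),i_4)$ on its NE; analogous canonical boxes work for $35142$, $42513$, and $351624$.

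The converse direction ``bad essential box $\Rightarrow$ pattern contained'' is the heart of the proof. Start with essential $(p,q)$ and witnesses $b\leq q$ with $w(b)\geq p$ and $c>q$ with $w(c)<p$. The adjacency conditions $(p,q+1),(p-1,q)\notin D(w)$ supply the anchors $w(q+1)\geq p$, $w^{-1}(p)=f>q$, and $w^{-1}(p-1)=d\leq q$; one also sees quickly that $w(b)>p$ and $w(c)<p-1$ (else $b=f$ or $c=d$, both impossible since $b\leq q<f$ and $c>q\geq d$). Then case-split on the signs of $w(b)-w(q+1)$ and $w(q)-w(c)$, and on the position of $d$ relative to $b$. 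The generic subcase $w(b)>w(q+1)$ with $w(q)>w(c)$ reads off a $4231$ directly on the columns $(b,q,q+1,c)$, since the four values then appear in the order $w(b)>w(q+1)>w(q)>w(c)$; each remaining subcase recruits one or both of the anchors $d$ and $f$ into the witness and realizes one of $35142$, $42513$, or $351624$. The main obstacle is this case analysis: it requires careful bookkeeping to verify that every sub-case yields a pattern \emph{on the stated list}, never any other bad pattern. I would organize the decision tree first by the sign of $w(b)-w(q+1)$, then by the sign of $w(q)-w(c)$, then by the position of $d$ and (where relevant) $f$, with boundary cases such as $q=n$, $d=b$, or $f=q+1$ collapsing into cases already handled.
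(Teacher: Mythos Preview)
The paper does not itself prove Theorem~\ref{thm:dbidiagram}; it is quoted from Gasharov and Reiner~\cite[Thm.~4.2]{GasRei}. What the paper does prove is the harder analogue, Theorem~\ref{thm:adbipatterns}, and it explicitly says that proof ``follows Gasharov and Reiner's proof for Theorem~\ref{thm:dbidiagram} with some additional complications.'' So the relevant comparison is to that argument, which only treats the direction ``bad essential box $\Rightarrow$ pattern.''

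Your $(1)\Leftrightarrow(2)$ is fine and matches the intended definition-unwinding.

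For ``bad essential box $\Rightarrow$ pattern'' your decomposition is genuinely different from the paper's. The paper's proof of Theorem~\ref{thm:adbipatterns} fixes SW/NE witnesses $c,c'$ and then splits on whether there is a diagram box directly north of $(p,q)$, introducing auxiliary rectangles $B$ and $B'$ bounded by the witness coordinates and reading off patterns depending on how many $1$'s lie in their interiors. You instead exploit the adjacency constraints at $(p,q)$ to produce the anchors $w(q{+}1)\geq p$, $d=w^{-1}(p{-}1)\leq q$, $f=w^{-1}(p)>q$, and branch on the signs of $w(b)-w(q{+}1)$ and $w(q)-w(c)$. Your generic $4231$ case is correct as written; the remaining branches are plausible but, as you note, require disciplined bookkeeping. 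Both decompositions are workable; the rectangle approach scales more gracefully to the six-pattern generalization, while yours stays closer to the raw adjacency data.

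For ``pattern $\Rightarrow$ bad essential box'' your sketch is correct for $4231$, though the reason is not quite ``witnesses are preserved under moving NE.'' The count of NE witnesses strictly decreases along any NE move within a component, since $s_w(p,q)=p-1-q+r_w(p,q)$ and $r_w$ is constant on components. What saves you is a hook-barrier argument: the $1$'s at $(w(i_2),i_2)$ and $(w(i_3),i_3)$ prevent any diagram path from $(w(i_3),i_2)$ reaching a box with $p\leq w(i_2)$ or $q\geq i_3$, so any essential box in that component is trapped in $(w(i_2),w(i_3)]\times[i_2,i_3)$, whence $i_4>q$ and $w(i_4)<p$ automatically. For $35142$, $42513$, $351624$ the analogous argument needs the canonical box and the barrier $1$'s chosen with care; it is not as mechanical as your phrase ``analogous canonical boxes work'' suggests, and you should expect to spend real effort there.
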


For an essential set box $(p,q)\in E(w)$ satisfying condition A,
$r_w(p,q)=0$.  This portion of the essential set and of the diagram
will not require further combinatorial attention.  We now focus on the
remainder of the diagram and essential set.  Let $D^\prime(w)\subset
D(w)$ be the subset consisting of diagram boxes $(x,y)$ where
$r_w(x,y)>0$, and let $E^\prime(w)\subset E(w)$ be the subset of essential
set boxes $(p,q)$ satisfying $r_w(p,q)>0$.  We begin with a lemma
relating the position $(p,q)$ to $r_w(p,q)$ for $(p,q)\in
E^\prime(w)$.

\begin{lemma}
\label{lem:dbilocation}
Let $w$ be a permutation defined by inclusions.  Let
$(p,q)\in E^\prime(w)$.  Then $p\leq q$ and $r_w(p,q)=q-p+1$.
\end{lemma}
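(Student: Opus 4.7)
The plan is to derive both conclusions directly from the definition of ``defined by inclusions'' together with the hypothesis $(p,q) \in E'(w)$, i.e.\ $r_w(p,q) > 0$. Since $w$ is defined by inclusions and $(p,q) \in E(w)$, we have the identity
\[
q - r_w(p,q) = \min\{p-1,\, q\}.
\]
I would split on which term realizes the minimum.

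In the case $p-1 \geq q$ (equivalently, $p \geq q+1$), the minimum equals $q$, so the displayed equation forces $r_w(p,q) = 0$. But this contradicts $(p,q) \in E'(w)$, so this case is impossible. Therefore $p - 1 < q$, i.e., $p \leq q$, and the minimum equals $p-1$. The displayed equation then rearranges to $r_w(p,q) = q - p + 1$, which is the second assertion. The first assertion $p \leq q$ is already subsumed (and is equivalent to the resulting value $q-p+1$ being strictly positive, consistent with the hypothesis $r_w(p,q) > 0$).

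There is no real obstacle here: the lemma is essentially an unpacking of the defining equality for ``defined by inclusions'' once we restrict to the subset $E'(w)$ where the rank is positive. The only thing to be careful about is to rule out the boundary case $p = q+1$, which is handled by the observation that this case forces $\min\{p-1,q\} = q$ and hence $r_w(p,q) = 0$, excluding $(p,q)$ from $E'(w)$.
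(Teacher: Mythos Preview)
Your proof is correct and considerably more direct than the paper's. You unpack the defining equality $q - r_w(p,q) = \min\{p-1,q\}$ and split on which side of the minimum is attained; since $r_w(p,q)>0$ rules out the case $\min\{p-1,q\}=q$, the other branch gives both conclusions immediately.

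The paper instead appeals to the characterization in Theorem~\ref{thm:dbidiagram}: for $(p,q)\in E'(w)$, Condition~A fails (as $r_w(p,q)>0$), so Condition~B holds, i.e.\ there are no $1$'s weakly NE of $(p,q)$. It then counts the $1$'s of the permutation matrix in the four quadrants around $(p,q)$, using that the NE quadrant is empty to set up two equations in $s$, $t$, and $r_w(p,q)$ whose difference yields $q-p+1=r_w(p,q)$. This argument is independently illuminating because it shows how the formula $r_w(p,q)=q-p+1$ arises from the spatial constraint of Condition~B, without referencing the rank-equals-$\min$ definition at all. Your approach, by contrast, treats the lemma as a tautology following from the definition, which it essentially is; this is shorter but does not exercise the combinatorial content of Condition~B.
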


Visually, this says that the rank associated to the essential set box
$(p,q)$ is its Manhattan distance above the main diagonal plus 1.

\begin{proof}
Note that $r_w(p,q)$ is the number of $1$'s strictly SW of $(p,q)$ in
the permutation matrix $w$.  Since there is an 1 in every row and
column of $w$ and in particular in every row below row $p$ and every
column to the left of column $q$,
$$(n-p)+(q-1)=s+r_w(p,q)+t+r_w(p,q),$$
where $s$ and $t$ are respectively the number of $1$'s strictly SE and
NW of $(p,q)$.  Since $(p,q)\in E^\prime(w)$, there are no $1$'s
strictly to the NE of $(p,q)$.  There is one $1$ in row $p$, one $1$
in column $q$, and the remainder are counted once in $s$, $t$, or
$r_w(p,q)$, so the total number of $1$'s is $n=s+t+r_w(p,q)+2$.
Therefore, $$s+t+r_w(p,q)+2-p+q-1=s+t+2r_w(p,q),$$ so $$q-p+1=r_w(p,q),$$
as required.
\end{proof}

Now we state a lemma on the relative positions of boxes in $E^\prime(w)$.

\begin{lemma}
\label{lem:dbinwse}
Let $w$ be defined by inclusions, and let $(p,q)$ and
$(p^\prime,q^\prime)$ be two distinct elements of $E^\prime(w)$.  Then
either $p<p^\prime$ and $q<q^\prime$, or $p>p^\prime$ and
$q>q^\prime$.
\end{lemma}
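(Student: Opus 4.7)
The plan is to argue by contradiction, ruling out every configuration of two boxes $(p,q), (p',q') \in E'(w)$ other than the two asserted ``same-direction'' orderings. The key observation is that any $(p,q) \in E'(w)$ has $r_w(p,q) > 0$, so condition~A of Theorem~\ref{thm:dbidiagram} fails; hence condition~B must hold, giving us ``no $k > q$ with $w(k) < p$,'' and similarly for $(p',q')$. This places a strong restriction on what can lie NE of any essential set box in $E'(w)$, and essentially all of the work of the proof is carried out by exploiting it.

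First I would handle the strictly anti-diagonal case where $(p',q')$ lies strictly NE of $(p,q)$, i.e., $p' < p$ and $q' > q$. Since $(p',q') \in D(w)$ gives $w(q') < p' < p$, the column $k = q' > q$ is a direct violation of condition~B at $(p,q)$; the strictly SW configuration is symmetric. For the same-row case $p = p'$ (WLOG $q < q'$), the diagram condition at $(p,q')$ gives $w(q') < p$ with $q' > q$, again violating condition~B at $(p,q)$. For the same-column case $q = q'$ (WLOG $p < p'$), I would instead invoke condition~B at $(p',q)$: every $k > q$ satisfies $w(k) \geq p'$, so all $p'-1$ values in $\dbrac{1,p'-1}$ must appear at positions $\leq q$. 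In particular $w^{-1}(p) \leq q$, contradicting the diagram condition $w^{-1}(p) > q$ forced by $(p,q) \in D(w)$.

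The only subtle subcase is the same-column one, where no single ``offending dot'' is visible from condition~B alone; the obstruction is the counting argument on values less than $p'$ just described. Once these cases are excluded, the only remaining possibilities are $p < p', q < q'$ and $p > p', q > q'$, which is the statement of the lemma.
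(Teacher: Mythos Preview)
Your proof is correct and follows essentially the same idea as the paper: both exploit that every box in $E'(w)$ must satisfy Condition~B, and then exhibit an offending index $k$ coming from the diagram condition at the other box.  The only real difference is in the bookkeeping.  The paper handles everything after the same-row case in a single stroke: assuming $p<p'$, if $q\geq q'$ then $(p,q)\in D(w)$ forces $w^{-1}(p)>q\geq q'$, and $k=w^{-1}(p)$ directly witnesses the failure of Condition~B at $(p',q')$ since $w(k)=p<p'$.  This simultaneously covers the strictly anti-diagonal and the same-column configurations, so your separate counting argument for the same-column case (pushing all values below $p'$ into the first $q$ columns) is unnecessary---the single index $w^{-1}(p)$ already does the job.
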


In other words, any two boxes in $E^\prime(w)$ are strictly NW and SE
of each other.  Indeed, we can define a partition $\lambda$ (drawn in
the French manner) whose outer corners are the boxes of $E^\prime(w)$.
(Actually, $\lambda$ should also include any boxes $(p,q)\in
E(w)\setminus E^\prime(w)$ which nevertheless satisfy condition B.)
We can also define a partition $\mu$ whose diagram is $D(w)\setminus
D^\prime(w)$; its outer corners are the essential set boxes of rank 0.
The skew partition $\lambda/\mu$ is not exactly the right hull of the
permutation $w$ as studied by Sj\"ostrand~\cite{SjosRH}, but our
partition $\mu$ is the same as his, and our partition $\lambda$ is
slightly smaller but closely related.  It appears that further
combinatorial considerations on the diagrams of the permutations may
elucidate the connection between the results of Sj\"ostrand and of
Gasharov and Reiner.

\begin{proof}
If $p=p^\prime$, then we can assume without loss of generality that
$q>q^\prime$.  Since $(p,q)$ is in the diagram, $w(q)<p$.  Then
$w(q)<p^\prime$ and $q>q^\prime$, so $(p^\prime,q^\prime)$ does not
satisfy Condition B.

Otherwise, assume without loss of generality that $p<p^\prime$.  If
$q\geq q^\prime$, then since $(p,q)$ is in the diagram, $w^{-1}(p)>q$.
Therefore, $p<p^\prime$ and $w^{-1}(p)>q^\prime$, so
$(p^\prime,q^\prime)$ does not satistfy Condition B.
\end{proof}

We now describe a partition of $D^\prime(w)$ into rectangular regions,
one associated to each box of $E^\prime(w)$.  Let $k$ be the number of
elements of $E^\prime(w)$.  We order the boxes in $E^\prime(w)$ by
rank (with ties broken arbitrarily) and label them
$(p_1,q_1),\ldots,(p_k,q_k)$, so $r_w(p_1,q_1)\leq\cdots\leq
r_w(p_k,q_k)$.  Let $R_m\subset D^\prime(w)$ be the set of diagram boxes
which are weakly SW of $(p_m,q_m)$ but not weakly SW of
$(p_{m^\prime},q_{m^\prime})$ for any $m^\prime<m$.  For convenience, let
$r_m=r_w(p_m,q_m)$.

\begin{lemma}
\label{lem:dbipartition}
Each region $R_m$ is a rectangle consisting of boxes all from the
connected component of $D(w)$ containing $(p_m,q_m)$.
\end{lemma}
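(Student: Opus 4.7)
The plan is to exhibit $R_m$ explicitly as a sub-rectangle of the connected component of $D(w)$ containing $(p_m,q_m)$.

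First I would rewrite $R_m$ as an intersection with an outer bounding rectangle. By Lemma~\ref{lem:dbinwse}, each essential set box $(p_{m'},q_{m'})$ with $m' \neq m$ is either strictly NW or strictly SE of $(p_m,q_m)$. Define
\[
q^* := \max\{q_{m'} : m' < m \text{ and } (p_{m'},q_{m'}) \text{ is NW of } (p_m,q_m)\}
\]
(with $q^* = 0$ if this set is empty), and analogously
\[
p^* := \min\{p_{m'} : m' < m \text{ and } (p_{m'},q_{m'}) \text{ is SE of } (p_m,q_m)\}
\]
(with $p^* = n+1$ if empty). Because a box $(x,y)$ weakly SW of $(p_m,q_m)$ already satisfies $x > p_{m'}$ whenever $(p_{m'},q_{m'})$ is NW, the condition that $(x,y)$ is not weakly SW of $(p_{m'},q_{m'})$ reduces there to $y > q_{m'}$, and dually for SE boxes. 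Hence
\[
R_m = D'(w) \cap \bigl([p_m, p^*-1] \times [q^*+1, q_m]\bigr).
\]

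Second I would identify a candidate inner rectangle. Let $\tilde q_m$ be the smallest $y \in [q^*+1, q_m]$ such that $(p_m, z) \in D(w)$ for every $z \in [y, q_m]$, and let $\tilde p_m$ be the largest $x \in [p_m, p^*-1]$ such that $(z, q_m) \in D(w)$ for every $z \in [p_m, x]$. For $(x,y) \in [p_m,\tilde p_m] \times [\tilde q_m, q_m]$, the diagram conditions at $(p_m,y)$ and $(x,q_m)$ yield $w(y) < p_m \leq x$ and $w^{-1}(x) > q_m \geq y$, so $(x,y) \in D(w)$, and this rectangle is manifestly path-connected to $(p_m,q_m)$ inside $D(w)$. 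Moreover, by the definitions every row in $[p_m,\tilde p_m]$ has its $1$ in a column strictly greater than $q_m$, and every column in $[\tilde q_m,q_m]$ has its $1$ in a row strictly less than $p_m$. Consequently, all $r_m = q_m - p_m + 1$ ones counted by $r_w(p_m,q_m)$ (via Lemma~\ref{lem:dbilocation}) actually lie in rows $> \tilde p_m$ and columns $< \tilde q_m$, so each of them contributes to $r_w(x,y)$ for every $(x,y)$ in the inner rectangle. We deduce $r_w(x,y) = r_m > 0$, placing the whole inner rectangle inside $R_m$.

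Third I would verify the reverse inclusion. Suppose $(x,y) \in R_m$ with $x > \tilde p_m$. The chain defining $\tilde p_m$ breaks at row $\tilde p_m + 1$, forcing $w^{-1}(\tilde p_m + 1) \leq q_m$; since $w(w^{-1}(\tilde p_m+1)) = \tilde p_m + 1 > p_m$, the column $c := w^{-1}(\tilde p_m + 1)$ is not one of the columns in $[\tilde q_m, q_m]$ and hence $c \leq \tilde q_m - 1$. Then a careful case analysis, invoking Condition B of Theorem~\ref{thm:dbidiagram} at $(p_m,q_m)$ and at the NW/SE essential neighbours determining $q^*$ and $p^*$, shows that the conjunction of $(x,y) \in D(w)$ and $r_w(x,y) > 0$ is not possible; a symmetric argument handles the case $y < \tilde q_m$. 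Combining with the second paragraph, $R_m = [p_m, \tilde p_m] \times [\tilde q_m, q_m]$, as desired.

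The main obstacle I anticipate is the reverse inclusion. The forward inclusion follows cleanly once the inner rectangle is identified and the rigid rank identity of Lemma~\ref{lem:dbilocation} is used to pin down the locations of the $r_m$ relevant ones. Excluding spurious $D'(w)$-cells in the outer rectangle outside the inner one is more delicate and demands that we trace through the constraints on $w$ imposed jointly by Condition B of Theorem~\ref{thm:dbidiagram} at $(p_m,q_m)$ and at its essential neighbours, in order to rule out each potential configuration of a rank-positive diagram cell outside the inner rectangle.
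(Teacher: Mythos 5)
Your approach is genuinely different from the paper's, and the first two paragraphs are sound, but the third paragraph has a real gap that you yourself flag: the ``careful case analysis'' needed for the reverse inclusion is asserted, not carried out. This is not a small omission --- it is the crux of the argument, since without it you have only shown that $R_m$ \emph{contains} the inner rectangle $[p_m,\tilde p_m]\times[\tilde q_m, q_m]$, and neither of the lemma's two conclusions (that $R_m$ is a rectangle, and that it lies in a single connected component) follows. Your set-up so far gives you a $1$ at $(\tilde p_m+1,\,c)$ with $c\le\tilde q_m-1$, but turning this into a contradiction for an arbitrary $(x,y)\in R_m$ with $x>\tilde p_m$ requires, at minimum, tracking the effect of that $1$'s hook on the connectivity of $D(w)$ and on $r_w$, and also handling the case $y<\tilde q_m$; none of this is in place, and it is not obvious from Condition B alone which configurations can or cannot arise.

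For comparison, the paper's argument is much shorter and hinges on two global facts you do not use: (i) if $(x,y)\in D(w)$ is weakly SW of $(p_m,q_m)$ but in a different connected component, then the hook(s) separating the two components force $r_w(x,y)<r_m$, which (by the chosen ordering and the fact that every positive-rank box lies SW of some essential box of its own component) puts $(x,y)$ inside an earlier region $R_{m'}$; and (ii) each connected component of $D(w)$ is a Young diagram whose outer corners are exactly the essential set boxes, so the boxes of the component weakly SW of the outer corner $(p_m,q_m)$ and not weakly SW of any other outer corner of the same component automatically form a rectangle. Your construction is more explicit and would give a concrete description of $R_m$ in terms of $w$ alone, which is appealing, but you would need to actually supply the excluded-box argument --- most naturally by reintroducing facts (i) and (ii), at which point the two proofs largely converge. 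As written, the proposal is incomplete.
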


\begin{proof}
Suppose $(x,y)\in D(w)$ is weakly SW of $(p_m,q_m)$ but not in the
same component as $(p_m,q_m)$.  Since there is one or more crossed out
lines between $(x,y)$ and $(p_m,q_m)$ in the drawing of the diagram,
$r_w(x,y)<r_w(p_m,q_m)$.  If $r_w(x,y)=0$, then $(x,y)\not\in
D^\prime(w)$.  Otherwise, $(x,y)$ is SW of some essential set box
$(p_{m^\prime},q_{m^\prime})$ in its own connected component.  Since
$r_w(p_{m^\prime},q_{m^\prime})=r_w(x,y)<r_w(p_m,q_m)$, we must have
that $m^\prime<m$ by the requirements on our ordering of
$E^\prime(w)$.  Therefore, by our definition of the region $R_m$,
$(x,y)\not\in R_m$.

Because all connected components of $D(w)$ form the diagram of a
partition shape, and the essential set boxes in any connected
component are the outer corners of the partition, each $R_m$ must be a
rectangle.
\end{proof}

For each integer $m\in\dbrac{1,k}$, we define $\Wpred(m)$ as
the index of the first region (other than $R_m$) with a box directly W
of $(p_m,q_m)$ and $\Spred(m)$ as the index of the first region (other
than $R_m$) with a box directly S of $(p_m,q_m)$.  If no such region
exists, we accordingly let $\Wpred(m)=0$ or $\Spred(m)=0$.  By our
definition of the regions $R_m$, $\Wpred(m)<m$, and $\Spred(m)<m$.

We remark that, in the remainder of the paper, it is not absolutely
necessary that our essential set boxes be ordered strictly in
increasing rank.  Rather, any ordering for which
Lemma~\ref{lem:dbipartition} holds will suffice.  Explicitly, this
means that if $m^\prime<m$, then either $r_{m^\prime}\leq r_m$, or
there exists $m^{\prime\prime}<m^\prime$ where
$(p_{m^{\prime\prime}},q_{m^{\prime\prime}})$ is visually between
$(p_m,q_m)$ and $(p_{m^\prime},q_{m^\prime})$ in the NW--SE ordering
of $E^{\prime}(w)$ on the drawing of the diagram.

\begin{example}
\label{ex:ess-819732654}
The permutation $w=819732654$ is defined by inclusions.  The
diagram and essential set of $w$ are as in
Figure~\ref{fig:ess-819372564}.  In particular, $E(w)=\{(2,2), (4,6),
(9,2)\}$.  Furthermore, $E^{\prime}(w)=\{(2,2),(4,6)\}$,
$(p_1,q_1)=(2,2)$ with $r_1=1$, and $(p_2,q_2)=(4,6)$ with $r_2=3$.
In this case, both $R_1$ and $R_2$ are entire connected components
of $D^{\prime}(w)$, but it is possible for a connected component of
$D^\prime(w)$ to be partitioned into several regions.\qed

\begin{figure}[htbp]
\[\begin{picture}(200,140)
\put(-55,60){$D(819732654)=$}
\put(37.5,0){\framebox(135,135)}
\put(37.5,15){\line(1,0){30}}
\put(52.5,15){\line(0,-1){15}}
\put(67.5,15){\line(0,-1){15}}

\put(52.5,30){\line(0,1){90}}
\put(67.5,30){\line(0,1){90}}
\put(52.5,30){\line(1,0){15}}
\put(52.5,45){\line(1,0){15}}
\put(52.5,60){\line(1,0){15}}
\put(52.5,75){\line(1,0){15}}
\put(52.5,90){\line(1,0){15}}
\put(52.5,105){\line(1,0){15}}
\put(52.5,120){\line(1,0){15}}

\put(97.5,45){\line(0,1){45}}
\put(112.5,45){\line(0,1){45}}
\put(127.5,45){\line(0,1){45}}
\put(97.5,45){\line(1,0){30}}
\put(97.5,60){\line(1,0){30}}
\put(97.5,75){\line(1,0){30}}
\put(97.5,90){\line(1,0){30}}

\thicklines
\put(45,22.5){\circle*{4}}
\put(45,22.5){\line(1,0){127.5}}
\put(45,22.5){\line(0,1){112.5}}

\put(60,127.5){\circle*{4}}
\put(60,127.5){\line(1,0){112.5}}
\put(60,127.5){\line(0,1){7.5}}

\put(75,7.5){\circle*{4}}
\put(75,7.5){\line(1,0){97.5}}
\put(75,7.5){\line(0,1){127.5}}

\put(90,37.5){\circle*{4}}
\put(90,37.5){\line(1,0){82.5}}
\put(90,37.5){\line(0,1){97.5}}

\put(105,97.5){\circle*{4}}
\put(105,97.5){\line(1,0){67.5}}
\put(105,97.5){\line(0,1){37.5}}

\put(120,112.5){\circle*{4}}
\put(120,112.5){\line(1,0){52.5}}
\put(120,112.5){\line(0,1){22.5}}

\put(135,52.5){\circle*{4}}
\put(135,52.5){\line(1,0){37.5}}
\put(135,52.5){\line(0,1){82.5}}

\put(150,67.5){\circle*{4}}
\put(150,67.5){\line(1,0){22.5}}
\put(150,67.5){\line(0,1){67.5}}

\put(165,82.5){\circle*{4}}
\put(165,82.5){\line(1,0){7.5}}
\put(165,82.5){\line(0,1){52.5}}

\put(56,4.5){$E$}

\put(56,109.5){$E$}
\put(116,79.5){$E$}

\end{picture}\]
\caption{\label{fig:ess-819732654} Diagram and essential set for $w=819732654$.}
\end{figure}
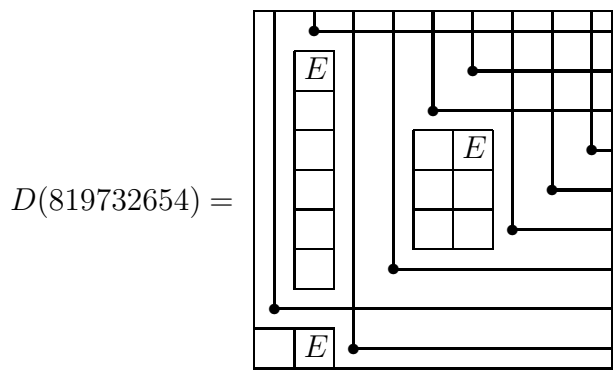
\end{example}

\subsection{Permutations almost defined by inclusions}

Now we define conditions on the diagram of a permutation $w$ which are
a weakening of the conditions of Gasharov and Reiner.  We say a
permutation $w$ is {\bf almost defined by inclusions} if, for all
$(p,q)\in E(w)$, either $(p,q)$ satisfies one of the Conditions A and
B defined by Gasharov and Reiner, or $(p,q)$ satisfies both one of the
following Conditions W and X and one of the following conditions Y and
Z.  Note Conditions W and X are respectively the mirror images of
Conditions Y and Z under reflection across the main antidiagonal
(which in terms of permutations takes $w$ to $w_0w^{-1}w_0$).

\begin{itemize}
\item[W:] 
For all $p^\prime<p$, $(p^\prime,q)\not\in E(w)$ (and hence, for all
$p^\prime<p$, $(p^\prime,q)\not\in D(w)$).  Furthermore, either
$(p,q-1)\not\in D(w)$, or there exists $p^\prime<p$ such that
$(p^\prime,q-1)\in E(w)$, $(p^\prime,q-1)$ satisfies Condition B, and
$r_w(p^\prime,q-1)=r_w(p,q)$.  (The last part of the previous
condition is equivalent to $(p^\prime,q-1)$ and $(p,q)$ being in the same
connected component of $D(w)$.)

\item[X:]
There exists a unique $p^\prime<p$ such that $(p^\prime,q)\in E(w)$.
Furthermore, $(p^\prime,q)$ satisfies Condition B, and
$r_w(p^\prime,q)=r_w(p,q)+1$.  Finally, if $q^\prime$ is the smallest
integer such that $(p^\prime,b)\in D(w)$ for all
$b\in\dbrac{q^\prime,q}$, then $(p,q^\prime-1)\in D(w)$.  (Given the
first two conditions, the combinatorics of diagrams always implies
that $q^\prime>1$ and that $(p,b)\in D(w)$ for all $b\in\dbrac{q^\prime,q}$.)

\item[Y:]
For all $q^\prime>q$, $(i,q^\prime)\not\in E(w)$.  Furthermore, either
$(p+1,q)\not\in D(w)$, or there exists $q^\prime>q$ such that
$(p+1,q^\prime)\in E(w)$, $(p+1,q^\prime)$ satisfies Condition B, and
$r_w(p+1,q^\prime)=r_w(p,q)$.

\item[Z:]
There exists a unique $q^\prime>q$ such that $(p,q^\prime)\in E(w)$.
Furthermore, $(p,q^\prime)$ satisfies Condition B and
$r_w(p,q^\prime)=r_w(p,q)+1$.  Finally, if $p^\prime$ is the greatest
integer such that $(a,q^\prime)\in D(w)$ for all
$a\in\dbrac{p,p^\prime}$, then $(p^\prime+1,q)\in D(w)$.
\end{itemize}

Note Conditions W and X are mutually exclusive, as are Y and Z.  By
the {\bf type} of an essential set box failing conditions A and B we
mean the pair of conditions among W, X, Y, and Z it satisfies, so a
Type WZ essential set box is one that satisfies Conditions W and Z.

\begin{example}
Let $w=819372564$, as in Example~\ref{ex:ess-819372564} and
Figure~\ref{fig:ess-819372564}.  The essential set boxes at $(2,2)$,
$(4,6)$, and $(9,2)$ satisfy conditions A and B.  The essential set
box at $(4,4)$ is of Type WZ, and the essential set box at $(6,7)$ is
of Type WY.\qed
\end{example}

It will turn out that the permutations that are almost defined by
inclusions are precisely the ones indexing lci Schubert varieties.
In the interest of keeping the logic of the proof clear, we will
maintain a distinction between the two notions until we have proved
their equivalence.

We now show that avoiding the six given patterns implies that a
permutation is almost defined by inclusions.

\begin{theorem}
\label{thm:adbipatterns}
If a permutation is not almost defined by inclusions, then it contains one of
the patterns $53241$, $52341$, $52431$, $35142$, $42513$, and $351624$.
\end{theorem}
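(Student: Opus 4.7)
Suppose $w$ is not almost defined by inclusions; then some essential set box $(p,q)\in E(w)$ fails both Conditions A and B, and additionally fails both W and X, or both Y and Z. The antidiagonal reflection $w\mapsto w_0 w^{-1}w_0$ carries essential set boxes to essential set boxes, preserves Conditions A and B (the SW and strict-NE regions are preserved under this reflection), swaps W with Y and X with Z, and permutes the six listed patterns among themselves (pairing $53241\leftrightarrow 52431$ and $35142\leftrightarrow 42513$, with $52341$ and $351624$ self-fixed). Hence we may assume $(p,q)$ fails both W and X.

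Failure of A at $(p,q)$ yields an index $a<q$ with $w(a)>p$; failure of B yields $b>q$ with $w(b)<p$; and $(p,q)\in E(w)$ forces $w(q)<p$ and $c:=w^{-1}(p)>q$. Applying the contrapositive of $(3)\Rightarrow(2)$ in Theorem~\ref{thm:dbidiagram} localized at $(p,q)$, these data produce an embedding of one of $4231$, $35142$, $42513$, or $351624$ in $w$, with the four witnesses occurring at (or near) the positions $a,q,c,b$. If the extracted pattern is one of the last three we are done; so we may assume it is $4231$, realized at $a<q<c<b$ with $w(a)>w(c)=p>w(q)>w(b)$.

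It remains to extract from the failure of W and X a fifth entry of $w$ which, combined with those four, produces one of $53241$, $52341$, or $52431$. The main case split is on whether some $p'<p$ satisfies $(p',q)\in E(w)$. If so, Condition X fails in one of four ways---non-uniqueness of $p'$, failure of Condition B at the unique $(p',q)$, failure of the rank equality $r_w(p',q)=r_w(p,q)+1$, or failure of the westward-extension clause---and in each subcase an additional $1$ of $w$ is forced into a specific region relative to $(p,q)$ whose insertion into the $4231$ at $(a,q,c,b)$ produces $52341$, $52431$, or $53241$. If no such $p'$ exists, then the failure of W forces $(p,q-1)\in D(w)$ together with the absence of a compatible essential set box in column $q-1$; the $1$ obstructing any such box (lying either in a row $\ge p$ of column $q-1$, or in a row $<p$ whose rank class precludes matching) supplies the needed fifth entry.

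The principal obstacle is enumerating the several simultaneous failure modes of W and X. Each individual subcase is a concrete diagram-chase, made routine by the rigid combinatorics of NE corners (which confine the relevant $1$'s of $w$ to small regions pinned down by the NE-corner conditions at $(p,q)$ and at any neighboring essential set box). However, systematically verifying in each subcase that the extracted fifth entry is genuinely new (distinct from $a,q,c,b$) and lies in precisely the correct cell of the matrix to produce the claimed five-element pattern requires careful bookkeeping.
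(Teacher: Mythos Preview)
Your proposal correctly identifies the antidiagonal symmetry reduction (this matches the paper exactly), but what follows is a plan rather than a proof, and the plan has a concrete gap.

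The claimed $4231$ at positions $a<q<c<b$ with $w(a)>w(c)=p>w(q)>w(b)$ is not guaranteed by the data you have. Failure of Condition B only gives some $b>q$ with $w(b)<p$; it does \emph{not} force $b>c$ nor $w(b)<w(q)$. (For instance, if $c=w^{-1}(p)$ is large and the witness $b$ to failure of B lies in $\dbrac{q+1,c-1}$ with value between $w(q)$ and $p$, neither inequality holds.) You would have to argue that $b$ can be re-chosen with both properties, and this already begins the kind of region-by-region case analysis that constitutes the whole proof. More seriously, the ``upgrade'' step---finding a fifth entry from the failure of W and X and showing it slots into the $4231$ to form $53241$, $52341$, or $52431$---is the entire content of the theorem and is not carried out. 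Your own closing paragraph acknowledges this.

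The paper's approach is organized differently. Rather than first invoking Gasharov--Reiner to obtain a four-letter pattern and then upgrading, it performs a direct case analysis on the local structure of $D(w)$ near $(p,q)$: whether $(p',q)\in D(w)$ for some $p'<p$, whether certain rectangular regions contain $1$'s, and so on. In several branches the pattern extracted is $35142$, $42513$, or $351624$ rather than a five-letter pattern containing $4231$; this happens even when $w$ also contains $4231$. So your dichotomy ``either done or reduce to a $4231$ to be upgraded'' does not reflect how the argument naturally proceeds, and restricting to the case where $w$ avoids $35142$, $42513$, $351624$ does not visibly shorten the case analysis---you would still need to rule out each of the paper's subcases that produce those patterns by showing they cannot occur under that avoidance hypothesis, which is essentially the same work.
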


Our proof follows Gasharov and Reiner's proof for
Theorem~\ref{thm:dbidiagram} with some additional complications
required in our case.

\begin{proof}
Suppose our permutation $w$ is not almost defined by inclusions.
Therefore, either there is some box $(p,q)\in E(w)$ that satisfies
none of Conditions A, B, W, and X, or there is some box $(p,q)\in E(w)$ that
satisfies none of Conditions A, B, Y, and Z.  We will give the details
of our proof only in the former case; the latter case can be proved in an
entirely identical fashion, except that we switch N and E and switch W and S
on diagrams, which corresponds to changing $w$ to $w_0w^{-1}w_0$.

Our proof strategy will be to split into a number of cases depending
on the features of $D(w)$ and the placement of the $1$'s relative to
$D(w)$.  In each case we will find one of the stated patterns in $w$.

Violation of A requires a $1$ in the permutation matrix SW of $(p,q)$;
we choose $c$ so that $(w(c),c)$ is SW of $(p,q)$.  Similarly, we
choose $c^\prime$ such that $(w(c^\prime),c^\prime)$ is NE of
$(p,q)$; the existence of $c^\prime$ is guaranteed by violation of B.

We now split into two cases depending on whether there is a diagram
box directly N of $(p,q)$.

\begin{enumerate}
\item For all $p^\prime<p$, the box $(p^\prime,q)\not\in D(w)$.

Note that $w(j)>w(c^\prime)$, as otherwise $(w(c^\prime),j)$ would be
in the diagram, contradicting our assumption for this case.  Then,
since $(p,q)$ violates Condition W, the box $(p,q-1)$ is in the
diagram, and it is not S of a box in the same component satisfying
Condition B.  We now have two cases depending on whether $(p-1,q-1)$
is in the diagram.

\begin{enumerate}
\item The box $(p-1,q-1)\not\in D(w)$.

In this case, $w^{-1}(p-1)<q$.  Let $B$ be the rectangular box bounded
by rows $p-1$ and $w(c)$ and columns $q$ and $c^\prime$, and let
$B^\prime$ be the box bounded by rows $w(c^\prime)$ and $p$ and
columns $c$ and $q$.

If $B$ does not contain a $1$ in its interior, then $w(q+1)>w(c)$ and
$w^{-1}(p)>c^\prime$.  Therefore, the $1$'s in columns $c,q,q+1,c^\prime$,
and $w^{-1}(p)$ form a $42513$ pattern.

If $B$ contains a $1$ in its interior, there are two cases.  If
$B^\prime$ contains a $1$ in its interior, then the $1$'s in $B$ and
$B^\prime$ along with the $1$'s in columns $c$, $q$, and $c^\prime$
produce a $52341$ or $53241$ pattern, depending on whether the $1$ in
$B^\prime$ is above or below row $w(q)$.  If $B^\prime$ does not
contain a $1$ in its interior, then $w^{-1}(p-1)<c$ and
$w(q-1)<w(c^\prime)$.  Therefore, the $1$'s in columns
$w^{-1}(p-1),c,q-1$ and $c^\prime$ along with the $1$ in $B$ produce a
$35142$ pattern (where the $1$ in $B$ represents the $4$).

\item The box $(p-1,q-1)\in D(w)$.

In this case, $w(q)=p-1$.  Furthermore, there exists a unique
$p^\prime<p$ such that $(p^\prime,q-1)\in E(w)$ and
$r_w(p^\prime,q-1)=r_w(p,q)$.  (This is the essential set box in
column $q-1$ in the same connected component of $D(w)$.)  Since
$(p,q)$ does not satisfy Condition W, $(p^\prime,q-1)$ does not
satisfy condition B.  We change our choice of $c^\prime$ if necessary
so that $(w(c^\prime),c^\prime)$ is NE of both $(p,q)$ and
$(p^\prime,q-1)$.  Now let $B$ be the rectangular box bounded by rows
$p-1$ and $w(c)$ and columns $q$ and $c^\prime$, as in Case 1a, and
let $B^\prime$ be the box bounded by rows $w(c^\prime)$ and $p^\prime$
and columns $c$ and $q$.

The argument is now the same as in Case 1a, with two minor
differences.  In the case both $B$ and $B^\prime$ contain a $1$, the
pattern $52341$ is the only one that can be produced.  In the case
where $B$ contains a $1$ but $B^\prime$ does not, $p^\prime$ must be
used instead of $p$.
\end{enumerate}
\item There exists $p^\prime<p$ with $(p^\prime,q)\in D(w)$.

This implies that there exists $p^\prime<p$ with $(p^\prime,q)\in
E(w)$.  We break into the cases where there exists such an
$(p^\prime,q)\in E(w)$ violating Condition B and where every
$(p^\prime,q)\in E(w)$ with $p^\prime<p$ satisfies Condition B (which
implies that there is only one $(p^\prime,q)\in E(w)$ with
$p^\prime<p$).

\begin{enumerate}
\item There exists $p^\prime<p$ such that $(p^\prime,q)\in E(w)$ and $(p^\prime,q)$ violates Condition B.

We may assume, changing our choice of $c^\prime$ if necessary, that
$(w(c^\prime),c^\prime)$ is NE of $(p^\prime,q)$.  Now let $B$ be the
rectangular box bounded by rows $p-1$ and $w(c)$ and columns $q$ and
$c^\prime$, as in Case 1a, and let $B^\prime$ the box bounded by rows
$w(c^\prime)$ and $p$ and columns $c$ and $q+1$ (and not $q$ as
in Case 1a).

If $B$ does not contain a $1$ in its interior, then we have a $42513$
pattern or a $351624$ pattern depending on whether or not $B^\prime$
contains a $1$ in its interior.  If $B$ contains a $1$ in its
interior, then if $B^\prime$ contains {\it two} $1$'s in its
interior, we have a $53241$ or a $52341$ pattern depending on the
arrangement of these two $1$'s.  If $B$ contains a $1$ in its interior
and $B^\prime$ contains fewer than two $1$'s in its interior, either
$w^{-1}(p-1)<c$, or $(p-1,w^{-1}(p-1))$ is the only $1$ in the interior
of $B$.  If $w^{-1}(p-1)<c$, then either $w^{-1}(p^\prime)$ is E of
the $1$ in $B$, in which case $w^{-1}(p-1)$, $c$, $q$, the $1$ in $B$,
and $w^{-1}(p^\prime)$ form a $35142$ pattern, or $w^{-1}(p^\prime)$
is W of the $1$ in $B$.  In this latter case, $w(q+1)>c$, so
$w^{-1}(p-1)$, $c$, $q$, $q+1$, $w^{-1}(p^\prime)$, and the $1$ in $B$
form a $351624$ pattern.  Otherwise, when $(p-1,w^{-1}(p-1))$ is the
only $1$ in the interior of $B$, $w^{-1}(p^\prime-1)<c$ and
$w(q)<w(c^\prime)$, so $w^{-1}(p^\prime-1)$, $c$, $q$, the $1$ in
$B$, and $c^\prime$ form a $35142$ pattern.

\item For the only $p^\prime<p$ such that $(p^\prime,q)\in E(w)$, the essential set box $(p^\prime,q)$ satisfies Condition B.

Since $(p,q)$ violates Condition X, either
$r_w(p^\prime,q)<r_w(p,q)-1$, or, letting $q^\prime$ denote the
smallest integer such that $(p^\prime,b)\in D(w)$ for all $b$ with
$j^\prime\leq b\leq j$, the box $(p,q^\prime-1)\not\in D(w)$.  We
further split into cases.

\begin{enumerate}
\item The rank conditions satisfy $r_w(p^\prime,q)<r_w(p,q)-1$.

In this case, neither $(p-1,q)$ nor $(p-2,q)$ is in $D(w)$.  Let $B$
and $B^\prime$ be as in Case 2a.  If $B$ does not have a $1$ in the
interior, we have a $42513$ or $351624$ pattern as in Case 2a.  Also
as in Case 2a, if $B^\prime$ has two $1$'s in its interior, then we
have a $53241$ or $52341$ pattern.  Otherwise, either $w^{-1}(p-1)<c$
or both $w^{-1}(p-2)<c$ and $w(q)<w(c^\prime)$, and the remainder of
the proof in these cases follows the remaining parts of Case 2a.

\item The box $(p,q^\prime-1)\not\in D(w)$.

We let $c=j^\prime-1$; note that, by our assumptions for this case,
$(w(c),c)$ is SW of $(p,q)$, as before.  In particular, our choice of
$c$ now forces $w^{-1}(p-1)<c$.  Let $B$ be the rectangular box
bounded by rows $p-1$ and $w(c)$ and columns $q$ and $c^\prime$ as
before.  We now have a $35142$ or $351624$ pattern depending on
whether $B$ contains a $1$ in its interior or not. \qedhere
\end{enumerate}
\end{enumerate}
\end{enumerate}
\end{proof}

We now canonically associate a permutation defined by inclusions to
every permutation which is almost defined by inclusions.  For a
permutation $w$ almost defined by inclusions, let
$E^{\prime\prime}(w)$ denote the subset of the essential set which
satisfies neither Condition A nor Condition B.  As before, let
$E^\prime(w)$ be the subset of the essential set which satisfies
Condition B but not Condition A.

\begin{theorem}
\label{thm:dbifromadbi}
Let $w$ be a permutation almost defined by inclusions.  Then there
exists a permutation $v$ such that
\begin{enumerate}
\item The essential set $E(v)=E(w)\setminus E^{\prime\prime}(w)$, and
\item The ranks $r_v(p,q)=r_w(p,q)$ for all $(p,q)\in E(v)$.
\end{enumerate}
These conditions define a unique permutation $v$ which is defined by
inclusions.  Furthermore, $\ell(v)-\ell(w)$ is the number of boxes in
$E^{\prime\prime}(w)$.
\end{theorem}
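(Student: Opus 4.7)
The plan is to prove existence of $v$ by an explicit diagram-modification construction, with uniqueness and the ``defined by inclusions'' claim following structurally. For each $(p,q) \in E''(w)$, its type (WY, WZ, XY, or XZ) specifies how it relates to nearby essential set boxes in $E'(w)$ via Conditions W, X, Y, Z. For each type I would exhibit a single Bruhat cover $w' = w \cdot s_{ij}$ with $\ell(w') = \ell(w)+1$ whose effect on the diagram is to add exactly one box, in such a way that $(p,q)$ ceases to be a NE corner of its connected component. For example, in the Type~WZ case Condition Z provides a companion essential set box $(p,q^\ast) \in E'(w)$ with $r_w(p,q^\ast) = r_w(p,q)+1$, which forces exactly one ``crossing'' $1$ in the permutation matrix inside $\dbrac{p,n} \times \dbrac{q+1,q^\ast}$; the prescribed transposition swaps this crossing column with the column determined by Condition W, producing a single new diagram box immediately above $(p,q)$ and merging the component of $(p,q)$ with the component of its structural neighbor.

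Iterating the construction over all of $E''(w)$ yields a permutation $v$ with $\ell(v) = \ell(w) + \#E''(w)$. Because each modification is confined to a small local window pinned down by Conditions W, X, Y, Z, the modifications for distinct problematic boxes do not interact; hence the iteration can be performed in any order, and after processing all of $E''(w)$ one finds $E(v) = E(w) \setminus E''(w)$ with $r_v = r_w$ on the common domain. Each box of $E(v)$ already satisfied Condition A or Condition B of Theorem~\ref{thm:dbidiagram} in $w$, and the local modifications preserve these conditions since they introduce no new $1$'s SW or NE of any untouched essential set box; Theorem~\ref{thm:dbidiagram} then implies that $v$ is defined by inclusions. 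Uniqueness is standard: the essential set together with its rank function determines the Schubert variety $X_v$, and permutations biject with Schubert varieties.

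The main obstacle is the case analysis verifying that each of the four types WY, WZ, XY, XZ admits such a canonical single-box Bruhat cover that preserves all other essential set data. Conditions W, X, Y, Z are calibrated so that the local configuration around each $(p,q) \in E''(w)$ falls into one of a small number of sub-cases, each admitting an unambiguous transposition; nevertheless, pinning down the precise swap indices $i,j$ in each sub-case and verifying that the modified permutation has exactly the predicted diagram requires careful bookkeeping of the positions of the $1$'s in $w$ relative to both the problematic essential set box and its structural neighbors. Once each single case is handled, the fact that the simultaneous modifications can be performed independently follows directly from the locality of their supports.
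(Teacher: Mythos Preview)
Your strategy matches the paper's: induction on $\#E^{\prime\prime}(w)$, with each step a type-dependent Bruhat cover $w'=wt$ that removes exactly one box from $E^{\prime\prime}(w)$ while preserving the remaining essential set data, and uniqueness via Fulton's theorem that the essential set with ranks determines the permutation.

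There is, however, a concrete error in your description of the mechanism. You assert that the cover ``adds exactly one box'' to the diagram so that $(p,q)$ ceases to be a NE corner. This is backwards: since $\#D(w)=\binom{n}{2}-\ell(w)$, a Bruhat cover with $\ell(w')=\ell(w)+1$ must \emph{shrink} the diagram by one box in total, not enlarge it. Your Type~WZ sketch (``a single new diagram box immediately above $(p,q)$'') reflects this confusion. In the paper's actual construction, the four types behave quite differently at the level of boxes: for Type~WY the transposition swaps columns $q$ and $w^{-1}(p)$ and the effect is simply $D(w')=D(w)\setminus\{(p,q)\}$; for Type~WZ the transposition swaps columns $q$ and $q+1$ (where $w(q+1)=p'+1$ with $p'$ as in Condition~Z), and the diagram loses the column segment $\{(p,q),\ldots,(p'+1,q)\}$ while gaining the shorter segment $\{(p,q+1),\ldots,(p',q+1)\}$; Types~XY and~XZ involve still larger rearrangements. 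In each case the net change is $-1$ box, and the verification that $E(w')=E(w)\setminus\{(p,q)\}$ uses the specific shape of Conditions W, X, Y, Z to check that no spurious new essential set boxes are created along the altered rows and columns. Your locality and independence claims are correct in spirit, but they rest on getting these diagram changes right.
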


It would be interesting to know if the existence of such a $v$ in some
way gives an alternative characterization, independent of conditions
on the diagram, of being almost defined by inclusions.  We discuss
this further as Question~\ref{prob:dbifromadbiconv} in Section 7.

\begin{proof}
Formally, we will structure the proof as induction on the number of
boxes in $E^{\prime\prime}(w)$.  Informally, one should think of this
proof as giving a way to construct $v$ by eliminating boxes of
$E^{\prime\prime}(w)$ from the essential set one at a time,
increasing the length of the permutation by 1 at each step.  It turns
out that the boxes of $E^{\prime\prime}(w)$ can be removed from the
essential set in any order.

The base case is where $E^{\prime\prime}(w)$ is empty.  In this case,
$w$ is defined by inclusions, so $v=w$ is such a permutation, and
$\ell(v)-\ell(w)=0$, which is the number of boxes in
$E^{\prime\prime}(w)$.

Let $(p,q)$ be a box of $E^{\prime\prime}(w)$.  We now divide the
proof into cases depending on the type of $(p,q)$.

Suppose $(p,q)$ is of type WY.  Let $w^\prime=wt$, where $t$ is the
transposition switching $q$ and $w^{-1}(p)$.  Since there are no
essential set boxes directly N of $(p,q)$, $w^{-1}(a)<q$ for all
$a\in\dbrac{w(q),p}$.  Since there are no essential set boxes directly
E of $(p,q)$, $w(b)>i$ for all $in\dbrac{j,w^{-1}(i)}$ with $j<b<w^{-1}(i)$.  This implies
that $\ell(w^\prime)=\ell(w)+1$ and
$D(w^\prime)=D(w)\setminus\{(p,q)\}$.  Conditions W and Y ensure that
$E(w^\prime)=E(w)\setminus\{(p,q)\}$ since any diagram box in
$(p,q-1)$ or $(p+1,q)$ is respectively S or W of an essential set box
of the same rank.  Since there are no boxes of $D(w)$ other than
$(p,q)$ in the rectangle bounded by rows $w(q)$ and $p$ and columns
$q$ and $w^{-1}(p)$, every box of $E(w^\prime)$ satisfies exactly the
same conditions that it satisfies as an element of $E(w)$.  Therefore,
$w^\prime$ is almost defined by inclusions, and
$E^\prime(w^\prime)=E^\prime(w)$.  Since $E^{\prime\prime}(w^\prime)$
has one fewer box than $E^{\prime\prime}(w)$, by the inductive
hypothesis, the theorem holds for $w^\prime$, so it holds for $w$.

Now suppose $(p,q)$ is of type WZ.  Let $p^\prime$ and $q^\prime$ be
as in Condition Z.  It follows that $w(q+1)=p^\prime+1$.  Let
$w^\prime=wt$, where $t$ switches $q$ and $q+1$.  Since $w(q)<p\leq
p^\prime+1=w(q+1)$, $\ell(w^\prime)=\ell(w)+1$.  Furthermore,
\[
D(w^\prime)=D(w)\cup\{(p,q+1),\ldots,(p^\prime,q+1)\}\setminus\{(p,q),\ldots,(p^\prime+1,q)\}.
\]
Since $r_w(p,q^\prime)=r_w(p,q)+1$ by Condition Z, $(p,q+2)\in D(w)$,
so $(p,q+2)\in D(w^\prime)$.  Also, by Condition W, if $(p,q-1)\in
D(w)$, then $(p-1,q-1)\in D(w)$, so if $(p,q-1)\in D(w^\prime)$, then
$(p-1,q-1)\in D(w^\prime)$.  Therefore,
$E(w^\prime)=E(w)\setminus\{(p,q)\}$.  Every box of $E(w^\prime)$
satisfies exactly the same conditions that it satisfies as an element
of $E(w)$, so $w^\prime$ is almost defined by inclusions and
$E^\prime(w^\prime)=E^\prime(w)$.  By the inductive hypothesis, the
theorem holds for $w^\prime$, so it holds for $w$.

Now suppose $(p,q)$ is of type XY.  Let $q^\prime$ be as in Condition
X; then $w(q^\prime-1)=p-1$.  Let $w^\prime=wt$ where $t$ switches
$q^\prime+1$ and $w^{-1}(p)$.  The argument in this case is entirely
analogous to the one in the case where $(p,q)$ is of type WZ.

Finally, suppose that $(p,q)$ is of type XZ.  Let $p_X$ and $q_X$ be
the $p^\prime$ and $q^\prime$ of Condition X, and $p_Z$ and $q_Z$ the
$p^\prime$ and $q^\prime$ of Condition Z.  Note that $w(q+1)=p_Z+1$
and $w(q_X-1)=p-1$.  Let $w^\prime=wt$ where $t$ switches $q_X-1$ and
$q+1$.  Since the interior of the rectangle bounded by columns $q_X-1$
and $q+1$ and rows $p-1$ and $p_Z+1$ consists entirely of boxes in
$D(w)$, there is no $a\in\dbrac{q_X-1,q+1}$ with
$w(a)\in\dbrac{p-1,p_Z+1}$, so $\ell(w^\prime)=\ell(w)+1$.
Furthermore,
\begin{align*}
D(w^\prime) = &D(w)\cup\{(p-1,q_X),\ldots,(p-1,q)\} \cup \{(p,q+1),\ldots,(p_Z,q+1)\} \\
&\setminus\{(p_Z+1,q_X-1),\ldots,(p_Z+1,q)\}\setminus\{(p,q_X-1),\ldots,(p_Z+1,q_X-1)\}.
\end{align*}
By Conditions Z and X respectively, $(p,q+2)$ and $(p-2,q)$ are both
in $D(w^\prime)$, so $E(w^\prime)=E(w)\setminus\{(p,q)\}$.  Every box
of $E(w^\prime)$ satisfies exactly the same conditions that it
satisfies as an element of $E(w)$, so $w^\prime$ is almost defined by
inclusions, and $E^\prime(w^\prime)=E^\prime(w)$.  By the inductive
hypothesis, the theorem holds for $w^\prime$, so it holds for $w$.
\end{proof}

\begin{example}
If $w=819372564$, as in Example~\ref{ex:ess-819372564}, then the
permutation $v$ which is associated to $w$ is the permutation
$v=819732654$ given in Example~\ref{ex:ess-819732654}.\qed
\end{example}

\section{Sufficiency} \label{sect:dbisufficiency}
We now proceed to prove that, if $w$ avoids the six stated patterns,
then $X_w$ is lci.  First note that the non-lci locus of $X_w$ is
closed and invariant under the action of the Borel group $B$, so the
non-lci locus must be a union of Schubert subvarieties of $X_w$.  In
particular, the non-lci locus of $X_w$ must contain the point
$e_{\id}$.  In the case where $w$ avoids the six stated patterns, we
show that $X_w$ is lci by showing that it is lci at $e_{\id}$.  By
Proposition~\ref{prop:alleqns}, we only need to show that the ideal
$I_w$ (defined in the paragraph before the Proposition) is generated
by $\binom{n}{2}-\ell(w)$ polynomials.

Note that $\binom{n}{2}-\ell(w)$ is precisely the number of boxes in
the diagram $D(w)$.  Therefore, given a permutation $w$ which is
almost defined by inclusions, we will define one polynomial in $I_w$
for each box of $D(w)$.  Letting $J_w$ be the ideal generated by these
polynomials, we explicitly show that every other generator of $I_w$ is
in the ideal $J_w$ and hence that $I_w=J_w$ is a complete
intersection.

We begin first with the case where $w$ is defined by inclusions.
Following that case, we treat the general case by showing that, if $w$
is almost defined by inclusions, and $v$ is the defined by inclusions
permutation associated to $w$ by Theorem~\ref{thm:dbifromadbi}, then
the ideal $I_w$ is generated by $I_v$ plus one polynomial for each box
of $E^{\prime\prime}(w)$.

\subsection{The defined by inclusions case}
\label{sec:dbisufficiency}

Let $w$ be a permutation defined by inclusions.  Fix a total ordering
of the essential set $E^\prime(w)$ in which smaller rank boxes come before
larger rank boxes as in the discussion prior to
Lemma~\ref{lem:dbipartition}.  Let $k$ be the number of boxes in
$E^\prime(w)$, $(p_1,q_1),\ldots,(p_k,q_k)$ be the boxes of the essential set
in order, $r_m=r_w(p_m,q_m)$ for each $m\in\dbrac{1,k}$, and
$R_1,\ldots,R_k$ the rectangular regions of $D^\prime(w)$ defined prior to
Lemma~\ref{lem:dbipartition}.

For each box $(x,y)\in D(w)$, we define a polynomial $f_{(x,y)}$ in
$S$ (which will be a generalized Pl\"ucker coordinate) as follows.  If $r_w(x,y)=0$, then let
$$A(x,y)=\{x\}\mbox{ and } B(x,y)=\{y\}.$$
Otherwise, the box $(x,y)$ is in some rectangle $R_m$.  Let
\[
A(x,y)=\dbrac{p_m,p_m+r_m-1}\cup\{x+r_m\},
\]
and let
\[
B(x,y)=\{y-r_m\}\cup\dbrac{q_m-r_m+1,q_m}.
\]
Now let
\[
f_{(x,y)}=d_{A(x,y),B(x,y)}.
\]
Let
\[
J_w=\langle f_{(x,y)}\rangle_{(x,y)\in D(w)}.
\]

\begin{example}
Let $w=819732654$ as in Example~\ref{ex:ess-819732654} and
Figure~\ref{fig:ess-819732654}.  Then
\begin{align*}
J_w=\langle d&_{\{2,3\},\{1,2\}}, d_{\{2,4\},\{1,2\}},
d_{\{2,5\},\{1,2\}}, d_{\{2,6\},\{1,2\}}, d_{\{2,7\},\{1,2\}},
d_{\{4,5,6,7\},\{3,4,5,6\}}, \\ & d_{\{4,5,6,8\},\{3,4,5,6\}},
d_{\{4,5,6,9\},\{3,4,5,6\}}, d_{\{4,5,6,7\},\{2,4,5,6\}},
d_{\{4,5,6,8\},\{2,4,5,6\}}, \\ & d_{\{4,5,6,9\},\{2,4,5,6\}},
d_{\{9\},\{1\}}, d_{\{9\},\{2\}}\rangle. \tag*{\qed}
\end{align*}
\end{example}

Our aim is to prove the following theorem.

\begin{theorem}
\label{thm:dbilci}
Suppose $w$ is defined by inclusions.  Then the ideals $I_w$ and $J_w$
are equal.  Hence $I_w$ defines a local complete intersection.
\end{theorem}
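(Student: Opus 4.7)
The plan is to establish $I_w = J_w$ and then apply the dimension observation at the end of Section 2.1: since $\Spec S$ has dimension $\binom{n}{2}$, the variety $\mathcal{N}_{\id, w}$ has dimension $\ell(w)$, and $J_w$ is generated by $\#D(w) = \binom{n}{2} - \ell(w)$ elements, the equality $I_w = J_w$ will immediately force $\Spec S/I_w$ to be a complete intersection.

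For $J_w \subseteq I_w$: each $f_{(x,y)} = d_{A(x,y), B(x,y)}$ is a single generalized Pl\"ucker coordinate of size $r_m + 1$, with row indices in $\dbrac{p_m, n}$ and column indices in $\dbrac{1, q_m}$, where $(p_m, q_m) \in E'(w)$ is the essential-set box associated to the rectangle $R_m$ containing $(x,y)$ (or, if $r_w(x,y) = 0$, any rank-$0$ essential-set box weakly NE of $(x,y)$). Thus $f_{(x,y)} \in I_{(p_m, q_m, r_m)} \subseteq I_w$.

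For $I_w \subseteq J_w$: induct on $m$ in the rank-increasing ordering of $E'(w)$; the rank-$0$ case is immediate because the variables $z_{p,q}$ themselves appear among the $f_{(x,y)}$'s and generate the ideals $I_{(p,q,0)}$. The key algebraic leverage at rank $r_m > 0$ comes from Lemma 3.3: since $r_m = q_m - p_m + 1$, the row interval $\dbrac{p_m, p_m + r_m - 1}$ and the column interval $\dbrac{q_m - r_m + 1, q_m}$ both equal $\dbrac{p_m, q_m}$, so the submatrix of $M_{\id}$ on this diagonal block is lower-triangular unipotent and has determinant $1$. Using column operations against this block to clear the extra column in $f_{(x,y)}$ (for $(x,y) \in R_m$) and then expanding along that column shows that $f_{(x,y)}$ has $z_{x + r_m,\, y - r_m}$ as its distinguished leading term, with correction terms that are products of other $z$-variables supported in the same SW submatrix. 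Reading this identity in reverse, modulo $J_w$ one may treat the ``non-unipotent'' entries of the $(n - p_m + 1) \times q_m$ SW submatrix as zero. An arbitrary generator $d_{A,B}$ of $I_{(p_m, q_m, r_m)}$, i.e., an $(r_m + 1) \times (r_m + 1)$ minor of that submatrix, may therefore be reduced by multilinear expansion along the rows and columns of the unipotent block: each resulting term is either an $S$-multiple of some $f_{(x,y)}$ with $(x,y) \in R_m$, or a smaller minor tied to an earlier essential-set box $(p_{m'}, q_{m'})$ with $m' < m$, hence in $J_w$ by induction.

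The main obstacle is the combinatorial verification that the expansion just described really does close up. Concretely, one must check, using the rectangular partition of $D'(w)$ from Lemma 3.4 together with the position data $\Wpred(m)$ and $\Spred(m)$, that when $d_{A, B}$ is expanded, every leftover term genuinely corresponds to a chosen generator $f_{(x,y)}$ of the current rectangle or to a minor attributable to a strictly earlier rectangle, with no term escaping into an unaccounted region. The defined-by-inclusions hypothesis is essential here for two distinct reasons: Lemma 3.3 furnishes the unipotent diagonal block that enables the column-cleaning expansion, and Lemma 3.4 organizes the diagram into the rectangles that make the induction close.
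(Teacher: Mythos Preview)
Your overall architecture matches the paper's: show $J_w\subseteq I_w$ trivially, then prove $I_w\subseteq J_w$ by induction on $m$ along the rank-ordered essential set, with the rank-$0$ part immediate and Lemma~\ref{lem:dbilocation} supplying the unipotent diagonal block $\dbrac{p_m,q_m}\times\dbrac{p_m,q_m}$ at each step. The conclusion about lci then follows exactly as you say from the generator count.

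The gap is in your mechanism for the inductive step. The assertion that ``modulo $J_w$ one may treat the non-unipotent entries of the SW submatrix as zero'' is not justified by the leading-term observation: knowing $f_{(x,y)}=\pm z_{x+r_m,\,y-r_m}+(\text{higher degree})$ lets you eliminate that single variable modulo $J_w$, but only for $(x,y)\in R_m$, and the correction terms involve the very entries you would need to have already killed. Your ``multilinear expansion'' is too vague to see why an arbitrary $(r_m{+}1)$-minor $d_{A,B}$ lands in $J_m$ rather than producing leftover terms supported outside both $R_m$ and the earlier rectangles. The paper handles this in two explicit stages, both driven by the same trick of expanding a carefully chosen \emph{enlarged} determinant in two ways together with induction on degree. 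First (Lemma~\ref{lem:dbinejustify}) it reduces an arbitrary $d_{A,B}\in I_{(p_m,q_m,r_m)}$ to the ``corner'' form $d_{A(x,y),B(x,y)}$ for some $x,y$ (not necessarily in $R_m$): pick $b\in\dbrac{q_m-r_m+1,q_m}\setminus B$, expand $d_{A\cup\{b\},B\cup\{b\}}$ along column $\min B$ to see it lies in $I'_{(p_m,q_m,r_m)}$ by degree induction, then expand the same determinant along row $b$ to isolate $d_{A,B}$. Second, for a corner minor with $(x,y)\notin R_m$, it builds $F=\dbrac{p_{m'},p_{m'}+r_{m'}-1}\cup A(x,y)$ and $G=\dbrac{q_{m'}-r_{m'}+1,q_{m'}}\cup B(x,y)$ with $m'=\Wpred(m)$ (or $\Spred(m)$), shows $d_{F,G}\in J_{m'}$ by Laplace expansion along the $G'$-columns, and then expands $d_{F,G}$ along the rows $F\setminus A(x,y)$; the unipotent block forces one term to be exactly $\pm d_{A(x,y),B(x,y)}$ and the rest are either zero or of strictly smaller degree. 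Your sketch conflates these two reductions and does not exhibit the enlarged determinant that makes the degree induction close.
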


We will first make a reduction showing $I_w$ is generated by a subset
of the original stated generators.  Recall that, for
$p,q,r\in\dbrac{1,n}$, $I_{(p,q,r)}$ is defined as the ideal
generated by the generalized Pl\"ucker coordinates $d_{A,B}$ for all
$A\subseteq\dbrac{p,n}$ and $B\subseteq\dbrac{1,q}$ where both
$A$ and $B$ have $r+1$ elements.  Furthermore, $I_w$ is the ideal
\[
I_w=\sum_{(p,q)\in E(w)} I_{(p,q,r_w(p,q))}.
\]
Now define
$I^\prime_{(p,q,r)}$ to be the ideal generated by
$d_{\dbrac{p,p+r-1}\cup\{x+r\},\{y-r\}\cup\dbrac{q-r+1, q}}$ for all $x$ with
$x\in\dbrac{p, n-r}$ and all $y\in\dbrac{1+r,q}$.  Let
\[
I^\prime_w=\sum_{(p,q)\in E(w)} I^\prime_{(p,q,r_w(p,q))}.
\]

\begin{lemma}
\label{lem:dbinejustify}
Suppose $w$ is defined by inclusions.  Then
$I^\prime_{(p,q,r_w(p,q))}=I_{(p,q,r_w(p,q))}$ for all $(p,q)\in E(w)$, and hence
$I^\prime_w=I_w$.
\end{lemma}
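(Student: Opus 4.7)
The plan is to do row reduction on the southwest submatrix of $M_{\id}$, using the case split of Theorem~\ref{thm:dbidiagram} together with Lemma~\ref{lem:dbilocation}. Fix $(p,q)\in E(w)$. Since $w$ is defined by inclusions, either Condition A or Condition B of Theorem~\ref{thm:dbidiagram} holds. If Condition A holds then $r:=r_w(p,q)=0$ and necessarily $p>q$, so the $(n-p+1)\times q$ SW submatrix of $M_{\id}$ consists entirely of distinct $z$-variables; both $I_{(p,q,0)}$ and $I^\prime_{(p,q,0)}$ are generated by exactly these variables, and the claim is immediate. Otherwise, Condition B holds, and by Lemma~\ref{lem:dbilocation} we have $p\leq q$ and $r=q-p+1$, so the generators of $I^\prime_{(p,q,r)}$ are exactly the minors $d_{\dbrac{p,q}\cup\{a\},\{b\}\cup\dbrac{p,q}}$ with $a\in\dbrac{q+1,n}$ and $b\in\dbrac{1,p-1}$.

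In this case the SW submatrix has a unipotent lower triangular block on rows and columns $\dbrac{p,q}$ (with zeros strictly above its diagonal) and distinct $z$-variables filling the rest. Using the diagonal $1$'s at $(p,p),\ldots,(q,q)$ as pivots, perform invertible unipotent row operations over $S$ to clear columns $\dbrac{p,q}$ in rows $\dbrac{q+1,n}$. The resulting matrix leaves rows $\dbrac{p,q}$ unchanged and turns each row $a\in\dbrac{q+1,n}$ into $(\tilde z_{a,1},\ldots,\tilde z_{a,p-1},0,\ldots,0)$ for explicit polynomials $\tilde z_{a,b}\in S$. Carrying out the same reduction inside the single $(r+1)\times(r+1)$ submatrix on rows $\dbrac{p,q}\cup\{a\}$ and columns $\{b\}\cup\dbrac{p,q}$ and then expanding along its transformed last row yields the identity $d_{\dbrac{p,q}\cup\{a\},\{b\}\cup\dbrac{p,q}}=\pm\tilde z_{a,b}$; hence $I^\prime_{(p,q,r)}=\langle\tilde z_{a,b}:a\in\dbrac{q+1,n},\,b\in\dbrac{1,p-1}\rangle$.

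The crux is the reverse containment $I_{(p,q,r)}\subseteq I^\prime_{(p,q,r)}$. Invertible row operations preserve the ideal of $(r+1)\times(r+1)$ minors by Cauchy--Binet, so $I_{(p,q,r)}$ coincides with the ideal of $(r+1)\times(r+1)$ minors of the reduced matrix. Any such minor has rows $A_1\sqcup A_2$ with $A_1\subseteq\dbrac{p,q}$, $A_2\subseteq\dbrac{q+1,n}$ and columns $B_1\sqcup B_2$ with $B_1\subseteq\dbrac{1,p-1}$, $B_2\subseteq\dbrac{p,q}$. Since the $A_2$-rows of the reduced matrix vanish across $B_2$, Laplace expansion along $A_2$ only receives contributions from index sets $T\subseteq B_1$, and each surviving summand factors through a $|A_2|\times|A_2|$ minor of the $\tilde z$-block, hence lies in $\langle\tilde z_{a,b}\rangle$ (if $|A_2|>|B_1|$ the minor vanishes outright). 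This proves $I_{(p,q,r)}=I^\prime_{(p,q,r)}$, and summing over $(p,q)\in E(w)$ yields $I^\prime_w=I_w$.

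The main technical obstacle will be verifying the identification $d_{\dbrac{p,q}\cup\{a\},\{b\}\cup\dbrac{p,q}}=\pm\tilde z_{a,b}$ cleanly and tracking signs in the Laplace expansion; both ultimately reduce to routine bookkeeping given the triangular pivot structure of $M_{\id}$.
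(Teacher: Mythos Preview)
Your argument is correct and takes a genuinely different route from the paper's proof.

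The paper proceeds by induction on the degree of $d_{A,B}$: given a minor $d_{A,B}$ whose row or column set is not of the special form, it picks a missing index $b\in\dbrac{q-r+1,q}\setminus B$ (or the analogous row version), forms the enlarged minor $d_{A\cup\{b\},B\cup\{b\}}$, and expands it two ways---once along the leftmost column to see it lies in $I^\prime_{(p,q,r)}$ by induction, and once along row $b$ to isolate $d_{A,B}$ modulo terms of smaller degree. This is an explicit, bare-hands manipulation that does not invoke any general linear-algebra lemma.

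Your approach instead recognizes that, under Condition~B with $r=q-p+1$, the southwest submatrix of $M_{\id}$ carries a unit lower-triangular $r\times r$ block on rows and columns $\dbrac{p,q}$, and uses this block as pivots for a single global row reduction. Invariance of the determinantal ideal under invertible row operations (Cauchy--Binet, or equivalently invariance of Fitting ideals) then reduces the problem to computing $(r+1)$-minors of a block matrix with a zero block, which your Laplace argument handles cleanly. The identification $d_{\dbrac{p,q}\cup\{a\},\{b\}\cup\dbrac{p,q}}=\pm\tilde z_{a,b}$ is correct because the row operations clearing columns $\dbrac{p,q}$ in row $a$ are determined solely by the entries of row $a$ in those columns together with the pivot block, all of which lie inside the chosen submatrix; the complementary minor on rows and columns $\dbrac{p,q}$ is then the determinant of the unipotent block, namely $1$.

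Your route is arguably more conceptual and shorter. What the paper's approach buys is methodological consistency: exactly the same ``enlarge the minor and expand two ways, inducting on degree'' trick is reused in the proofs of Theorem~\ref{thm:dbilci} and Lemma~\ref{lem:adbigenerators}, where one works modulo a smaller ideal $J_{m-1}$ or $I_v$ rather than proving a bare equality of determinantal ideals. In those settings there is no single invertible row operation that does the job, so the row-reduction viewpoint would require additional adaptation, whereas the degree-induction argument carries over verbatim.
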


\begin{proof}
For all $x\in\dbrac{p, n-r_w(p,q)}$ and all $y\in\dbrac{1+r_w(p,q),q}$, we have
\[
\dbrac{p,p+r_w(p,q)-1}\cup\{x+r_w(p,q)\}\subseteq \dbrac{p,n}
\]
and
\[
\{y-r_w(p,q)\}\cup\dbrac{q-r_w(p,q)+1,q}\subseteq \dbrac{1,q}.
\]
Furthermore, the size of both these sets is $r_w(p,q)+1$.  Therefore,
\[
I^\prime_{(p,q,r_w(p,q))}\subseteq I_{(p,q,r_w(p,q))}.
\]

Fix $(p,q)\in E(w)$, and fix $r=r_w(p,q)$.  If $r=0$, then the
two generating sets are the same, so we only need to prove the reverse
direction in the case where $(p,q)\in E^\prime(w)$.  Given $(p,q)\in
E^\prime(w)$, we need to show that $d_{A,B}\in I^\prime_{(p,q,r)}$ for
all $A$ and $B$ satisfying the conditions that
$A\subseteq\dbrac{p,n}$, $B\subseteq\dbrac{1,q}$, and both $A$
and $B$ have $r+1$ elements.  We do so by induction on the
degree of $d_{A,B}$.  Recall that our polynomial ring $S$ is graded so
that $\deg d_{A,B}=\sum_{p\in A} p-\sum_{q\in B} q$.

Fix $A$ and $B$ such that $d_{A,B}$ is one of the defined generators
of $I_{(p,q,r)}$.  If $\dbrac{p,p+r-1}\subseteq A$ and
$\dbrac{q-r+1,q}\subseteq B$, then $d_{A,B}\in
I^\prime_{(p,q,r)}$ by definition.  Otherwise, either the difference
$\dbrac{p,p+r-1}\setminus A$ or
$\dbrac{q-r+1,q}\setminus B$ is nonempty; we treat the case
where $\dbrac{q-r+1,q}\setminus B$ is nonempty and leave the
entirely analogous argument in the other case to the reader.  Let
$b\in\dbrac{q-r+1,q}\setminus B$.

Our proof is by expanding the determinant $d_{A\cup\{b\},B\cup\{b\}}$
in two different ways.  (If $b\in A$, we mean to consider the
determinant of the matrix where row $b$ occurs twice.  This
determinant is identically $0$, but we can still consider its
expansions formally.)  Let $b^\prime$ be the smallest element of $B$,
and note that $b^\prime<q-r+1$.  Now consider the Laplace expansion of
$d_{A\cup\{b\},B\cup\{b\}}$ using column $b^\prime$, which is given by
\[
d_{A\cup\{b\},B\cup\{b\}}=\sum_{a\in A\cup\{b\}} \pm z_{a,b^\prime}
d_{A\cup\{b\}\setminus\{a\},B\cup\{b\}\setminus\{b^\prime\}}.
\]

By Lemma~\ref{lem:dbilocation}, $p=q-r+1$.  Therefore,
$p>b^\prime$, and $a>b^\prime$ for any $a\in A\cup\{b\}$.  This
implies
\begin{align*}
\deg d_{A\cup\{b\}\setminus\{a\},B\cup\{b\}\setminus\{b^\prime\}} &=\deg
d_{A,B}-a+b^\prime\\ &<\deg d_{A,B}.
\end{align*}
Furthermore, $p\leq b$, so
\[
A\cup\{b\}\setminus\{a\}\subseteq \dbrac{p,n}.
\]
Now, by the inductive hypothesis,
\[
d_{A\cup\{b\}\setminus\{a\},B\cup\{b\}\setminus\{b^\prime\}}\in
I^\prime_{(p,q,r)}
\]
for all $a\in A\cup\{b\}$ since they are all of
smaller degree.  Therefore,
\[
d_{A\cup\{b\},B\cup\{b\}}\in
I^\prime_{(p,q,r)}.
\]

Now we expand $d_{A\cup\{b\},B\cup\{b\}}$ along row $b$.  This
expansion is given by
\[
d_{A\cup\{b\},B\cup\{b\}}=\pm d_{A,B}+\sum_{b^\prime\in B}
z_{b,b^\prime}d_{A,B\cup\{b\}\setminus\{b^\prime\}}.
\]
If $b^\prime>b$, then $z_{b,b^\prime}=0$.  If $b^\prime<b$, then
\begin{align*}
\deg d_{A,B\cup\{b\}\setminus\{b^\prime\}}&=\deg d_{A,B}-b+b^\prime\\ &<\deg d_{A,B},
\end{align*}
so, by the inductive hypothesis,
\[
d_{A,B\cup\{b\}\setminus\{b^\prime\}} \in I^\prime_{(p,q,r)}.
\]
Therefore,
\[
d_{A,B}\in I^\prime_{(p,q,r)},
\]
as desired.
\end{proof}

\begin{example}
We illustrate our proof for the case where $p=4$, $q=6$, and $r=3$.
(This case comes up for $w=819732654$ as in
Example~\ref{ex:ess-819732654}.)  Here $I_{(p,q,r)}$ is the ideal of
all size $4$ minors of the matrix
\[
\begin{pmatrix}
z_{4,1} & z_{4,2} & z_{4,3} & 1 & 0 & 0 \\
z_{5,1} & z_{5,2} & z_{5,3} & z_{5,4} & 1 & 0 \\
z_{6,1} & z_{6,2} & z_{6,3} & z_{6,4} & z_{6,5} & 1 \\
z_{7,1} & z_{7,2} & z_{7,3} & z_{7,4} & z_{7,5} & z_{7,6} \\
z_{8,1} & z_{8,2} & z_{8,3} & z_{8,4} & z_{8,5} & z_{8,6} \\
z_{9,1} & z_{9,2} & z_{9,3} & z_{9,4} & z_{9,5} & z_{9,6}\end{pmatrix},
\]
while $I^\prime_{(p,q,r)}$ is the ideal generated by the size 4
minors which use the 3 topmost rows (plus some other row) and the 3 rightmost
columns (plus some other column).

Consider 
\[
d_{\{4,6,7,9\},\{1,2,4,6\}}=\begin{vmatrix}
z_{4,1} & z_{4,2} & 1 & 0 \\
z_{6,1} & z_{6,2} & z_{6,4} & 1 \\
z_{7,1} & z_{7,2} & z_{7,4} & z_{7,6} \\
z_{9,1} & z_{9,2} & z_{9,4} & z_{9,6}\end{vmatrix},
\]
which has degree $13$ in our grading.  Our proof writes
$d_{\{4,6,7,9\},\{1,2,4,6\}}$ in terms of generalized Pl\"ucker
coordinates of smaller degree in $I_{(p,q,r)}$, which by induction are
in $I^\prime_{(p,q,r)}$, as follows.

In this case, $b=5$.  Hence we consider
\[
d_{\{4,5,6,7,9\},\{1,2,4,5,6\}}=\begin{vmatrix}
z_{4,1} & z_{4,2} & 1 & 0 & 0 \\
z_{5,1} & z_{5,2} & z_{5,4} & 1 & 0 \\
z_{6,1} & z_{6,2} & z_{6,4} & z_{6,5} & 1 \\
z_{7,1} & z_{7,2} & z_{7,4} & z_{7,5} & z_{7,6} \\
z_{9,1} & z_{9,2} & z_{9,4} & z_{9,5} & z_{9,6}\end{vmatrix}.
\]
Since $b^\prime=1$, we consider the expansion
\begin{align*}
d&_{\{4,5,6,7,9\},\{1,2,4,5,6\}}=
z_{4,1}\begin{vmatrix}
z_{5,2} & z_{5,4} & 1 & 0 \\
z_{6,2} & z_{6,4} & z_{6,5} & 1 \\
z_{7,2} & z_{7,4} & z_{7,5} & z_{7,6} \\
z_{9,2} & z_{9,4} & z_{9,5} & z_{9,6}\end{vmatrix}
-z_{5,1}\begin{vmatrix}
z_{4,2} & 1 & 0 & 0 \\
z_{6,2} & z_{6,4} & z_{6,5} & 1 \\
z_{7,2} & z_{7,4} & z_{7,5} & z_{7,6} \\
z_{9,2} & z_{9,4} & z_{9,5} & z_{9,6}\end{vmatrix} \\
&+z_{6,1}\begin{vmatrix}
z_{4,2} & 1 & 0 & 0 \\
z_{5,2} & z_{5,4} & 1 & 0 \\
z_{7,2} & z_{7,4} & z_{7,5} & z_{7,6} \\
z_{9,2} & z_{9,4} & z_{9,5} & z_{9,6}\end{vmatrix}
-z_{7,1}\begin{vmatrix}
z_{4,2} & 1 & 0 & 0 \\
z_{5,2} & z_{5,4} & 1 & 0 \\
z_{6,2} & z_{6,4} & z_{6,5} & 1 \\
z_{9,2} & z_{9,4} & z_{9,5} & z_{9,6}\end{vmatrix}
+z_{9,1}\begin{vmatrix}
z_{4,2} & 1 & 0 & 0 \\
z_{5,2} & z_{5,4} & 1 & 0 \\
z_{6,2} & z_{6,4} & z_{6,5} & 1 \\
z_{7,2} & z_{7,4} & z_{7,5} & z_{7,6}\end{vmatrix}.
\end{align*}
The determinants in the expansion have degrees 10, 9, 8, 7, and 5
respectively, so by induction $d_{\{4,5,6,7,9\},\{1,2,4,5,6\}}\in
I^\prime_{(p,q,r)}$.

Now we expand $d_{\{4,5,6,7,9\},\{1,2,4,5,6\}}$ along row $b=5$.  We get
\begin{align*}
d&_{\{4,5,6,7,9\},\{1,2,4,5,6\}}=
z_{5,1}\begin{vmatrix}
z_{4,2} & 1 & 0 & 0 \\
z_{6,2} & z_{6,4} & z_{6,5} & 1 \\
z_{7,2} & z_{7,4} & z_{7,5} & z_{7,6} \\
z_{9,2} & z_{9,4} & z_{9,5} & z_{9,6}\end{vmatrix}
-z_{5,2}\begin{vmatrix}
z_{4,1} & 1 & 0 & 0 \\
z_{6,1} & z_{6,4} & z_{6,5} & 1 \\
z_{7,1} & z_{7,4} & z_{7,5} & z_{7,6} \\
z_{9,1} & z_{9,4} & z_{9,5} & z_{9,6}\end{vmatrix} \\
& +z_{5,4}\begin{vmatrix}
z_{4,1} & z_{4,2} & 0 & 0 \\
z_{6,1} & z_{6,2} & z_{6,5} & 1 \\
z_{7,1} & z_{7,2} & z_{7,5} & z_{7,6} \\
z_{9,1} & z_{9,2} & z_{9,5} & z_{9,6}\end{vmatrix}
-1\begin{vmatrix}
z_{4,1} & z_{4,2} & 1 & 0 \\
z_{6,1} & z_{6,2} & z_{6,4} & 1 \\
z_{7,1} & z_{7,2} & z_{7,4} & z_{7,6} \\
z_{9,1} & z_{9,2} & z_{9,4} & z_{9,6}\end{vmatrix}
+0\begin{vmatrix}
z_{4,1} & z_{4,2} & 1 & 0 \\
z_{6,1} & z_{6,2} & z_{6,4} & z_{6,5} \\
z_{7,1} & z_{7,2} & z_{7,4} & z_{7,5} \\
z_{9,1} & z_{9,2} & z_{9,4} & z_{9,5}\end{vmatrix}.\end{align*}
The last term is 0, the next to last term is
$d_{\{4,6,7,9\},\{1,2,4,6\}}$, and the determinants in the first three
terms have degrees 9, 10, and 12 respectively.  Since the first three
terms have determinants of degree less than 13, they are in
$I^\prime_{(p,q,r)}$.  As $d_{\{4,5,6,7,9\},\{1,2,4,5,6\}}\in
I^\prime_{(p,q,r)}$,
\begin{equation}
d_{\{4,6,7,9\},\{1,2,4,6\}} \in I^\prime_{(p,q,r)}.\tag*{\qed}
\end{equation}
\end{example}

Our proof for the theorem requires an ordering of $E^\prime(w)$ and
the associated partition of $D^\prime(w)$ as described before
Lemma~\ref{lem:dbipartition}.  We fix here our notation for this
partition.  Label $E^\prime(w)$ as $(p_1,q_1),\ldots,(p_k,q_k)$ in
such a way that $r_w(p_m,q_m)\leq r_w(p_{m^\prime},q_{m^\prime})$
whenever $m<m^\prime$.  Let $R_m$ be the subset of $D^\prime(w)$
defined as those $(x,y)\in D^\prime(w)$ which are (weakly) SW of
$(p_m,q_m)$ but not (weakly) SW of $(p_{m^\prime},q_{m^\prime})$ for
any $m^\prime<m$.  Let $r_m=r_w(p_m,q_m)$ for all $m\in\dbrac{1,k}$.

Our proof will be by induction and we fix notation for various
subideals of $I_w$ and $J_w$.  Let $I_0\subseteq I_w$ to be the ideal
generated by $z_{x,y}(=f_{(x,y)})$ for $(x,y)\in D(w)$ such that
$r_w(x,y)=0$.  Let $I_m\subseteq I_w$ be the ideal
\[
I_m:=I_0+\sum_{m^\prime=1}^mI_{(p_{m^\prime},q_{m^\prime},r_{m^\prime})}.
\]
Similarly, define $J_0\subseteq J_w$ to be the ideal generated by
$z_{x,y}(=f_{(x,y)})$ for $(x,y)\in D(w)$ such that $r_w(x,y)=0$, and
let
\[
J_m:=J_0+\langle f_{(x,y)} \rangle_{(x,y)\in R_{m^\prime},
m^\prime\leq m}.
\]

\medskip
\noindent
\emph{Proof of Theorem~\ref{thm:dbilci}.}
We show by induction that $I_m=J_m$ for each $m$.  This suffices since
$I_w=I_k$ and $J_w=J_k$ by definition.

The ideals $I_0$ and $J_0$ are equal by definition, since both are
generated by $f_{(x,y)}=d_{\{x\},\{y\}}=z_{x,y}$ for all $(x,y)$ where
$r_w(x,y)=0$.

Assume by induction that $I_{m-1}=J_{m-1}$.  By definition,
$J_{m}\subseteq I_m$, so we need to show $I_m\subseteq J_m$.  The
ideal $I_m=I_{m-1}+I_{(p_m,q_m,r_m)}$, and by
Lemma~\ref{lem:dbinejustify},
$I_{(p_m,q_m,r_m)}=I^\prime_{(p_m,q_m,r_m)}$.  Therefore, we only need
to show $I^\prime_{(p_m,q_m,r_m)}\subseteq J_m$.  

Given $x\in\dbrac{p_m,n-r_m}$ and $y\in\dbrac{1+r_m,q_m}$, let
\[
A(x,y)=\dbrac{p_m,p_m+r_m-1}\cup\{x+r_m\}
\]
and
\[
B(x,y)=\{y-r_m\}\cup\dbrac{q_m-r_m+1,q_m}.
\]
To show that
$I^\prime_{(p_m,q_m,r_m)}\subseteq J_m$, we need to show that
$d_{A(x,y),B(x,y)}\in J_m$ for all $x$ and $y$ satisfying the above
conditions.  We do so by further induction on $\deg d_{A(x,y),B(x,y)}$
(or equivalently on $x-y$).

Let $a$ and $b$ denote the height and width (in terms of the number of
boxes) of the rectangular region $R_m$, so that $(p_m+a-1,q_m-b+1)$ is
the SW-most box in $R_m$.  If $(x,y)\in R_m$, then
$d_{A(x,y),B(x,y)}\in J_m$ by definition.  Otherwise, $x\geq p_m+a$, or
$y\leq q_m-b$.  We give the remaining details of this proof only for
latter case where $y\leq q_m-b$ and leave the completely analogous
argument in the former case to the reader.

By Lemma~\ref{lem:dbipartition}, $r_w(x,y)=r_m$ for all $(x,y)\in
R_m$, so, in particular, $r_w(p_m,q_m-b+1)=r_m$.  Note
$r_w(p_m,q_m-b+1)$ is exactly the number of non-diagram boxes directly
W of $(p_m,q_m-b+1)$, and there are exactly $q_m-b$ columns to the
left of $(p_m,q_m-b+1)$.  Therefore, there exist $q_m-b-r_m$ diagram
boxes directly W of $(p_m,q_m-b+1)$.  In particular, since $1+r_m\leq
y\leq q_m-b$, $q_m-b-r_m\geq 1$, so there exists a diagram box W of
$(p_m,q_m-b+1)$.  Let $q^\prime$ be the largest index such that
$q^\prime\leq q_m-b$ and $(p_m,q^\prime)\in D(w)$.  Note that
$r_m-r_w(p_m,q^\prime)$ is the number of non-diagram columns between
$q_m$ and $q^\prime$, so $q_m-b-q^\prime=r_m-r_w(p_m,q^\prime)$.
Since $y\leq q_m-b$, this implies $y-r_m\leq
q^\prime-r_w(p_m,q^\prime)$.

Since $(p_m,q^\prime)$ is the first diagram box directly W of $R_m$,
either $\Wpred(m)=0$, in which case $r_w(p_m,q^\prime)=0$, or
$(p_m,q^\prime)\in R_{\Wpred(m)}$.  If $r_w(p_m,q^\prime)=0$, then
$r_w(p_m,y-r_m)=0$, and $z_{p,y-r_m}\in I_0$ for all $p\geq p_m$.
Hence, all the entries in an entire column of the submatrix defining
$d_{A(x,y),B(x,y)}$ are in $I_0$, and $d_{A(x,y),B(x,y)}\in I_0$.

In the other case where $\Wpred(m)>0$, let $m^\prime=\Wpred(m)$,
noting that $q^\prime=q_{m^\prime}$ and
$r_w(p_m,q^\prime)=r_{m^\prime}$, so $y-r_m\leq
q_{m^\prime}-r_{m^\prime}$.  Now define sets $F$ and $G$
as follows.  Let
\[
F=\dbrac{p_{m^\prime},p_{m^\prime}+r_{m^\prime}-1}\cup\dbrac{p_m,p_m+r_m-1}\cup\{x+r_m\},
\]
and let
\[
G=\dbrac{q_{m^\prime}-r_{m^\prime}+1,q_{m^\prime}}\cup\dbrac{q_m-r_m+1,q_m}\cup\{y-r_m\}.
\]
Note that
\[
x+r_m> p_m+r_m-1> p_{m^\prime}+r_{m^\prime}-1
\]
and
\[
y-r_m<q_{m^\prime}-r_{m^\prime}+1<q_m-r_m+1;
\]
hence
\[
\#F=r_{m^\prime}+r_m+1-\max\{0,p_{m^\prime}+r_{m^\prime}-p_m\},
\]
while
\[
\#G=r_{m^\prime}+r_m+1-\max\{0,q_{m^\prime}-(q_m-r_m)\}.
\]
By Lemma~\ref{lem:dbilocation}, $r_m=q_m-p_m+1$ while
$r_{m^\prime}=q_{m^\prime}-p_{m^\prime}+1$.  This implies
\[
p_{m^\prime}+r_{m^\prime}-p_m=q_{m^\prime}-(q_m-r_m),
\]
so $F$ and $G$
have the same number of elements.  We let
\begin{align*}
c &=r_{m^\prime}+r_m+1-\#F \\
&=\max\{0,q_{m^\prime}-(q_m-r_m)\} \\
&=\#\left(\dbrac{q_{m^\prime}-r_{m^\prime}+1,q_{m^\prime}}\cap\dbrac{q_m-r_m+1,q_m}\right).
\end{align*}

Our proof is by expanding the determinant $d_{F,G}$ in two different
ways.  Let
\[
G^\prime=\{y-r_m\}\cup\dbrac{q_{m^\prime}-r_{m^\prime}+1,q_{m^\prime}},
\]
note that $G^\prime$ has $r_{m^\prime}$ elements, and consider the
Laplace expansion of $d_{F,G}$ using the columns in $G^\prime$, which
is given by
\[
d_{F,G}=\sum_{F^\prime\subseteq F} \pm
d_{F^\prime,G^\prime}d_{F\setminus F^\prime, G\setminus G^\prime},
\]
where the sum is over all subsets $F^\prime\subseteq F$ of
size $r_{m^\prime}$.  Since $y-r_m\leq q_{m^\prime}$,
$G^\prime\subseteq\dbrac{1,q_{m^\prime}}$, and
$F\subseteq\dbrac{q_{m^\prime},n}$, so $d_{F^\prime,G^\prime}$ is
in the ideal $I_{m^\prime}$ and hence by the inductive hypothesis in
$J_{m^\prime}\subseteq J_m$.  Therefore, $d_{F,G}\in
J_m$.

Now let
\[
F^{\prime\prime}=F\setminus
A(x,y)=\dbrac{p_{m^\prime},p_{m^\prime}+r_{m^\prime}-c-1},
\]
and
consider the Laplace expansion of $d_{F,G}$ using the rows of
$F^{\prime\prime}$, which is given by
\[
d_{F,G}=\sum_{G^{\prime\prime}\subseteq G} \pm
d_{F^{\prime\prime},G^{\prime\prime}}d_{A(x,y), G\setminus
G^{\prime\prime}}.
\]
Consider the term where
\[
G^{\prime\prime}=G\setminus
B(x,y)=\dbrac{q_{m^\prime}-r_{m^\prime}+1, q_{m^\prime}-c}.
\]
By Lemma~\ref{lem:dbilocation}, $r_{m^\prime}=q_{m^\prime}-p_{m^\prime}+1$, so
$q_{m^\prime}-r_{m^\prime}+1=p_{m^\prime}$, and
\[
q_{m^\prime}-c=p_{m^\prime}+r_{m^\prime}-c-1.
\]
Therefore, $F^{\prime\prime}=G^{\prime\prime}$, so
$d_{F^{\prime\prime},G^{\prime\prime}}=1$ for this choice of
$G^{\prime\prime}$.  Our Laplace expansion is therefore
\[
d_{F,G}=\pm
d_{A(x,y),B(x,y)} + \sum_{G^{\prime\prime}} \pm
d_{F^{\prime\prime},G^{\prime\prime}}d_{A(x,y), G\setminus
G^{\prime\prime}},
\]
where the sum is now over all
$G^{\prime\prime}\subseteq G$ of size $r_{m^\prime}-c$ other than
$G\setminus B(x,y)$.

If
\[
\sum_{u\in G^{\prime\prime}}u \geq \sum_{u\in G\setminus B(x,y)}
u,
\]
then there exists $u^\prime\in G^{\prime\prime}$ with
\[
u^\prime>q_{m^\prime}-c=p_{m^\prime}+r_{m^\prime}-c-1.
\]
Note
$u^\prime>p$ for all $p\in F^{\prime\prime}$, which implies
$d_{F^{\prime\prime},G^{\prime\prime}}=0$ as an entire column of the submatrix is $0$.

On the other hand, if
\[
\sum_{u\in G^{\prime\prime}}u <\sum_{u\in G\setminus B(x,y)} u,
\]
then
\begin{align*}
\deg d_{A(x,y), G\setminus G^{\prime\prime}} &= \deg d_{A(x,y),B(x,y)}-
\sum_{u\in G\setminus B(x,y)} u + \sum_{u\in G^{\prime\prime}}u\\ &<\deg d_{A(x,y),B(x,y)}.
\end{align*}
Since $d_{A(x,y), G\setminus G^{\prime\prime}}\in I_{(p_m,q_m,r_m)}=I^\prime_{(p_m,q_m,r_m)}$, by the inductive hypothesis for our induction on the degree of
$d_{A(x,y),B(x,y)}$,
\[
d_{A(x,y), G\setminus G^{\prime\prime}}\in J_m.
\]

Since $d_{F,G}\in J_m$ and
\[
d_{F^{\prime\prime},G^{\prime\prime}}d_{A(x,y), G\setminus
G^{\prime\prime}}\in J_m
\]
for all $G^{\prime\prime}\subseteq G$ of size
$r_{m^\prime}-c$ not including $G\setminus B(x,y)$, we have proven
$d_{A(x,y),B(x,y)}\in J_m$.\qed

\begin{example}
Consider $w=819732654$ as in Example~\ref{ex:ess-819732654}.  For
$m=2$, $(p_2,q_2)=(4,6)$, and $r_2=3$, our proof shows that
$d_{\{4,5,6,x+3\},\{y-3,4,5,6\}}\in J_m$ for all $x$ and $y$ with
$4\leq x\leq 6$ and $4\leq y\leq 6$.  The only cases where this is not
true by definition are those where $y=4$.  Note $m^\prime=\Wpred(2)=1$.

We illustrate the proof in the case where $x=5$ and $y=4$.  Consider the determinant
\[
d_{\{4,5,6,8\},\{1,4,5,6\}}=\begin{vmatrix}
z_{4,1} & 1 & 0 & 0 \\
z_{5,1} & z_{5,4} & 1 & 0 \\
z_{6,1} & z_{6,4} & z_{6,5} & 1 \\
z_{8,1} & z_{8,4} & z_{8,5} & z_{8,6}\end{vmatrix},
\]
which has degree $7$ in our grading.  Our proof writes
$d_{\{4,6,7,9\},\{1,2,4,6\}}$ in terms of determinants either of
smaller degree in $I_{(p_2,q_2,r_2)}$ or known to be in $I_{m^\prime}$ as
follows.

Here, $F=\{2,4,5,6,8\}$, $G=\{1,2,4,5,6\}$, and $c=0$.  Hence we consider
\[
d_{\{2,4,5,6,8\},\{1,2,4,5,6\}}=\begin{vmatrix}
z_{2,1} & 1 & 0 & 0 & 0\\
z_{4,1} & z_{4,2} & 1 & 0 & 0 \\
z_{5,1} & z_{5,2} & z_{5,4} & 1 & 0 \\
z_{6,1} & z_{6,2} & z_{6,4} & z_{6,5} & 1 \\
z_{8,1} & z_{8,2} & z_{8,4} & z_{8,5} & z_{8,6}\end{vmatrix}.
\]

For this example, $G^\prime=\{1,2\}$.  Note that the size $2$ minors
involving the columns 1 and 2 are in $I_{(2,2,1)}\subseteq I_1$.
Therefore, $d_{\{2,4,5,6,8\},\{1,2,4,5,6\}}\in I_1$.

Now, $F^{\prime\prime}=\{2\}$.  Therefore, we expand
$d_{\{2,4,5,6,8\},\{1,2,4,5,6\}}$ along row 2 to get
\[
d_{\{2,4,5,6,8\},\{1,2,4,5,6\}}=
z_{2,1}\begin{vmatrix}
z_{4,2} & 1 & 0 & 0 \\
z_{5,2} & z_{5,4} & 1 & 0 \\
z_{6,2} & z_{6,4} & z_{6,5} & 1 \\
z_{8,2} & z_{8,4} & z_{8,5} & z_{8,6}\end{vmatrix}
-1\begin{vmatrix}
z_{4,1} & 1 & 0 & 0 \\
z_{5,1} & z_{5,4} & 1 & 0 \\
z_{6,1} & z_{6,4} & z_{6,5} & 1 \\
z_{8,1} & z_{8,4} & z_{8,5} & z_{8,6}\end{vmatrix}+0-0+0.
\]
The second term is $-d_{\{4,5,6,8\}\{1,4,5,6\}}$, and the determinant
in the first term is in $I_2$ and has degree $6<7$.  Hence, by induction,
it is in $J_2$ and $d_{\{4,5,6,8\}\{1,4,5,6\}}\in J_2$ as desired. \qed
\end{example}

\subsection{The general case}
\label{sect:nondbisufficiency}

Now we treat the general case.  Let $w$ be almost defined by
inclusions, and let $(p,q)\in E^{\prime\prime}(w)$.  Let
\[
A^\prime(p,q)=\dbrac{p,p+r_w(p,q)}
\]
and
\[
B^\prime(p,q)=\dbrac{q-r_w(p,q),q}.
\]
Define
\[
f_{(p,q)}=d_{A^\prime(p,q),B^\prime(p,q)}.
\]
We show that $I_w$ is
generated by $I_v$ and $f_{(p,q)}$ for $(p,q)\in E^{\prime\prime}(w)$,
where $v$ is the defined by inclusions permutation associated to $w$
by Theorem~\ref{thm:dbifromadbi}.  We do so by showing the following
Lemma.

\begin{lemma}
\label{lem:adbigenerators}
Let $w$ be almost defined by inclusions, $v$ the defined by inclusions
permutation associated to $w$, and let $(p,q)\in E^{\prime\prime}(w)$.
Then
\[
I_{(p,q,r_w(p,q))}\subseteq I_v+\langle f_{(p,q)}\rangle.
\]
\end{lemma}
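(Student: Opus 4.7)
The plan is to prove the lemma by a case analysis on the type (XZ, XY, WZ, WY) of the essential box $(p, q) \in E^{\prime\prime}(w)$, combined with a strong induction on the degree of the generator $d_{A, B} \in I_{(p, q, r)}$, where $r = r_w(p, q)$.

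The key structural input is Lemma~\ref{lem:dbilocation} applied to $v$: any essential box of $v$ satisfying Condition B with rank $\rho$ lies on the antidiagonal $q^\star - p^\star + 1 = \rho$. Hence when the type of $(p, q)$ contains X, the witness $(p_X, q) \in E(v)$ has $p_X = q - r$, and when it contains Z, the witness $(p, q_Z) \in E(v)$ has $q_Z = p + r$. The canonical minor $f_{(p, q)} = d_{\dbrac{p, p+r}, \dbrac{q-r, q}}$ is thereby framed on the north and east by these neighbors.

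For the archetypal type XZ, I would first handle the canonical generators, namely those of the form $d_{A, B}$ with $A = \dbrac{p, p+r-1} \cup \{a\}$ and $B = \{b\} \cup \dbrac{q-r+1, q}$ where $a \geq p + r$ and $b \leq q - r$. The $(r+2)$-minor $d_{A \cup \{q-r\}, B \cup \{q-r\}}$ lies in $I_{(q-r, q, r+1)} \subseteq I_v$; Laplace-expanding along row $q-r$, whose only nonzero entries in the relevant columns are a $1$ at column $q-r$ and $z_{q-r, b}$ at column $b$ (entries east of $q-r$ being $0$ above the diagonal), produces
\begin{equation*}
d_{A, B} = \pm d_{A \cup \{q-r\}, B \cup \{q-r\}} \mp z_{q-r, b}\, d_{A, \dbrac{q-r, q}}.
\end{equation*}
A symmetric argument using the $(r+2)$-minor $d_{A \cup \{p+r\}, \dbrac{q-r, q} \cup \{p+r\}} \in I_{(p, p+r, r+1)} \subseteq I_v$, Laplace-expanded along column $p+r$ (whose only nonzero entries in the relevant rows are a $1$ at row $p+r$ and $z_{a, p+r}$ at row $a$), yields
\begin{equation*}
d_{A, \dbrac{q-r, q}} = \pm d_{A \cup \{p+r\}, \dbrac{q-r, q} \cup \{p+r\}} \mp z_{a, p+r}\, f_{(p, q)}.
\end{equation*}
Combining the two identities places $d_{A, B}$ in $I_v + \langle f_{(p, q)} \rangle$. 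To reduce an arbitrary $d_{A, B}$ to canonical form, I would mimic the proof of Lemma~\ref{lem:dbinejustify}: expand a suitable $(r+2)$-minor, obtained by adjoining a missing index from $\dbrac{p, p+r-1}$ or $\dbrac{q-r+1, q}$ together with a compensating column or row, in two ways to write $d_{A, B}$ as an element of $I_v$ plus a combination of strictly smaller-degree generators of $I_{(p, q, r)}$, which lie in $I_v + \langle f_{(p, q)} \rangle$ by the outer induction.

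For types XY and WZ, which each contain exactly one of X or Z, the corresponding Laplace trick above proceeds unchanged, while the missing side is handled by the structural content of W or Y. If the relevant column $q-1$ or row $p+1$ is absent from $D(w)$, then certain matrix entries vanish and the Laplace expansion collapses; otherwise, the adjacent essential box of rank $r$ in $E(v)$, whose location is again pinned down by Lemma~\ref{lem:dbilocation}, supplies an ideal inside $I_v$ that absorbs the unwanted terms. The main obstacle is type WY, where no rank-$(r+1)$ witness is adjacent to $(p, q)$ in $E(v)$. Here the single extra generator $f_{(p, q)}$ must bridge the gap using only rank-$r$ neighbors (or outright absence from $D(w)$) on both sides. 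I expect this case to require the most delicate bookkeeping, combining several Laplace expansions so that every contribution outside $I_v$ collapses into a single multiple of $f_{(p, q)}$.
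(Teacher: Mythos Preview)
Your proposal has a genuine gap: the type WY case is not proven, only hoped for, and the vague description of the W and Y side-arguments (``certain matrix entries vanish and the Laplace expansion collapses'') is not correct as stated---the matrix $M_{\id}$ does not change when $(p,q-1)\notin D(w)$, so nothing vanishes for free.

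The organizational choice of splitting by the four types is what creates this apparent difficulty. The paper's proof instead splits by which of $A$, $B$ deviates from the canonical $A'(p,q)$, $B'(p,q)$. If $B\neq B'(p,q)$, only the W/X alternative matters; if $A\neq A'(p,q)$, only the Y/Z alternative matters (by the obvious symmetry). Since $A=A'$ and $B=B'$ gives $d_{A,B}=f_{(p,q)}$ outright, the two conditions never need to be invoked simultaneously, and type WY is no harder than any other type.

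Under this organization the X subcase is essentially your Laplace trick: adjoin $b\in B'(p,q)\setminus B$ to both index sets, observe that the resulting $(r+2)$-minor lies in $I_{(q-r,\,q,\,r+1)}\subseteq I_v$, and expand along row $b$ to recover $d_{A,B}$ modulo smaller-degree generators of $I_{(p,q,r)}$. The W subcase, however, is genuinely different from what you sketched. One first locates the largest $q'<q$ with $(p,q')\in D(w)$ and argues (using the almost-defined-by-inclusions hypothesis) that there is some $(p',q')\in E(v)$ of rank $r'=r-(q-q'-1)$, which need not equal $r$. Setting $G=B\cap\dbrac{1,q'}$, either $\#G\geq r'+1$ and a Laplace expansion along the columns of $G$ puts $d_{A,B}\in I_v$ directly, or $\#G=r'$ and one adjoins $b\in\dbrac{q'-r',q'}\setminus G$, shows the enlarged minor lies in $I_{(p',q',r')}\subseteq I_v$ via a Laplace expansion along the columns $G\cup\{b\}$, and then expands along row $b$ as in the X subcase. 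No rank-$(r+1)$ neighbor is needed.
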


\begin{proof}
Let $r=r_w(p,q)$.  The ideal $I_{(p,q,r)}$ is generated by all
$d_{A,B}$ where $A\subseteq \dbrac{p,n}$, $B\subseteq
\dbrac{1,q}$, and $\#A=\#B=r+1$.  We need to show that
\[
d_{A,B}\in I_w+\langle f_{(p,q)}\rangle
\]
for all such $A$ and $B$.  We do so by
induction on the degree of $d_{A,B}$.  If $A=A^\prime(p,q)$ and
$B=B^\prime(p,q)$, then $d_{A,B}=f_{(p,q)}$ and we are done.  We are
left with the cases where $A\neq A^\prime(p,q)$ and where $B\neq
B^\prime(p,q)$.  We treat only the case where $B\neq B^\prime(p,q)$
and leave the entirely analogous argument in the other case to the
reader.

Since $w$ is almost defined by inclusions, $(p,q)$ satisfies either
condition W or condition X.  We treat these two cases separately.

Suppose $(p,q)$ satisfies condition X.  Then there exists
$(p^\prime,q)\in E(v)$ with $p^\prime<p$ and
\[
r_v(p^\prime,q)=r_w(p^\prime,q)=r+1.
\]
Let $b\in B^\prime(p,q)\setminus B$.  Consider the determinant
$d_{A\cup\{b\},B\cup\{b\}}$.  (If $b\in A$, we mean to consider the determinant of the matrix where row $b$ occurs twice.  This determinant is identically $0$, but we can still consider its expansions
formally.)  Note that
\[
b\geq q-r=q-r_v(p^\prime,q)+1.
\]
Since $(p^\prime,q)\in E(v)$ and $r_v(p^\prime,q)\neq 0$, by
Lemma~\ref{lem:dbilocation},
\[
p^\prime=q-r_w(p^\prime,q)+1,
\]
so $b\geq p^\prime$.  Therefore,
\[
A\cup\{b\}\subseteq\dbrac{p^\prime,n}.
\]
Also,
\[
B\cup\{b\}\subseteq\dbrac{1,q},
\]
and
\[
\#A=\#B=r+1=r_v(p^\prime,q)+1,
\]
so
\[
d_{A\cup\{b\},B\cup\{b\}}\in
I_v.
\]

Now we expand $d_{A\cup\{b\},B\cup\{b\}}$ along row $b$.  This
expansion is given by
\[
d_{A\cup\{b\},B\cup\{b\}}=\pm d_{A,B}+\sum_{b^\prime\in B}
z_{b,b^\prime}d_{A,B\cup\{b\}\setminus\{b^\prime\}}.
\]
If $b^\prime>b$, then $z_{b,b^\prime}=0$.  If $b^\prime<b$, then
\[
\deg d_{A,B\cup\{b\}\setminus\{b^\prime\}}=\deg d_{A,B}-b+b^\prime<\deg d_{A,B},
\]
so by the inductive hypothesis,
\[
d_{A,B\cup\{b\}\setminus\{b^\prime\}} \in I_w+\langle
f_{(p,q)}\rangle.
\]
Therefore, since every term of its expansion is in
$I_v+\langle f_{(p,q)}\rangle$,
\[
d_{A,B}\in I_v+\langle f_{(p,q)}\rangle.
\]

Next suppose $(p,q)$ satisfies Condition W.  Since $B\neq
B^\prime(p,q)$, $q>r+1$, so there exists $s<q$ with $(p,s)\in D(w)$.
Let $q^\prime$ be the largest such $s$.  If $q^\prime=q-1$, Condition
W states that there exists $p^\prime<p$ where $(p^\prime,q^\prime)\in
E(v)$.

If $q^\prime<q-1$, then $(p,q^\prime)$ is not in $E(w)$, because
otherwise $(p,q^\prime)$ would violate both Conditions Y and Z.  Since
$(p,q^\prime+1)\not\in D(w)$, this implies that there exists
$p^\prime<p$ with $(p^\prime,q^\prime)\in E(w)$.  Now
$(p^\prime+1,q^\prime)\in D(v)$ but $(p^\prime+1,q^\prime+1)\not\in
D(w)$, so $(p^\prime,q^\prime)$ does not satisfy Condition Y.

Furthermore, since $q^\prime<w^{-1}(p-1)\leq q$ (as $(p-1,q^\prime)\in
D(w)$ but $(p-1,q)\not\in D(w)$) but $w(s)>p$ for all $s$ with
$q^\prime<s<q$ (as $(p,s)\in D(w)$), it must be that $w(q)=p-1$.  Now
if there exists a unique $q^{\prime\prime}>q^\prime$ with
$(p^\prime,q^{\prime\prime})\in E(w)$, then $q^{\prime\prime}>q$
(since $w(s)\geq p-1\geq p^\prime$ for all $s$ with $q^\prime<s\leq
q$).  Note that $(p^\prime,q^\prime+1)$ and $(p^\prime,q)$ are two
distinct boxes not in $D(w)$ but between $(p^\prime,q^\prime)$ and
$(p^\prime,q^{\prime\prime})$, so
$r_w(p^\prime,q^{\prime\prime})-r_w(p^\prime,q^\prime)\geq 2$.
Therefore, $(p^\prime,q^\prime)$ does not satisfy Condition Z.  Since
$w$ is almost defined by inclusions, it must be the case that
$(p^\prime,q^\prime)$ satisfies Condition A or B, and hence
$(p^\prime,q^\prime)\in E(v)$.  Let $r^\prime=r_v(p^\prime,q^\prime)=r_w(p^\prime,q^\prime)$.

Let $G=B\cap\dbrac{1,q^\prime}$.  Note that, by the definition of
$q^\prime$, every box between $(p,q^\prime)$ and $(p,q)$ is not in
$D(v)$, and $r-r^\prime$ is the number of such boxes, so
$r-r^\prime=q-q^\prime-1$.  Since $B\subseteq\dbrac{1,q}$, this
implies
\[
\#G\geq r+1-(q-q^\prime)=r^\prime.
\]
If $\#G\geq r^\prime+1$,
then $d_{F,G}\in I_v$ for any $F\subseteq A$ with $\#G$ elements.  Since,
by Laplace expansion using the columns of $G$,
\[
d_{A,B}=\sum_{F\subseteq A} \pm d_{F,G}d_{A\setminus F,B\setminus G},
\]
it must follow that $d_{A,B}\in I_v$.

Otherwise, $G$ has exactly $r^\prime$ elements.  Let
\[
b\in \dbrac{q^\prime-r^\prime, q^\prime}\setminus G.
\]
We complete the
proof by expanding the determinant $d_{A\cup\{b\},B\cup\{b\}}$ in two
ways.  First let $G^\prime=G\cup\{b\}$, and consider the Laplace
expansion of $d_{A\cup\{b\},B\cup\{b\}}$ using the columns of
$G^\prime$, which is given by
\[
d_{A\cup\{b\},B\cup\{b\}}=\sum_{F^\prime\subseteq A\cup\{b\}} \pm
d_{F^\prime,G^\prime}d_{A\cup\{b\}\setminus F^\prime,
  B\cup\{b\}\setminus G^\prime},
\]
where the sum is over all subsets
$F^\prime\subseteq A\cup\{b\}$ of size $r^\prime$.  By
Lemma~\ref{lem:dbilocation}, $p^\prime=q^\prime-r^\prime+1$, so $b\geq
p^\prime$.  Therefore, $A\cup\{b\}\subseteq\dbrac{p^\prime,n}$, and
\[
d_{F^\prime,G^\prime}\in I_{(p^\prime,q^\prime,r^\prime)}\subseteq I_v
\]
for all choices of $F^\prime$.  Hence
\[
d_{A\cup\{b\},B\cup\{b\}}\in I_v.
\]

Now we expand $d_{A\cup\{b\},B\cup\{b\}}$ along row $b$ and use the
last part of the argument in the case where $(p,q)$ satisfies Condition X
to show that
\[
d_{A,B}\in I_v+\langle f_{(p,q)}\rangle.\qedhere
\]
\end{proof}

\begin{example}
Consider $w=819372564$ as in Example~\ref{ex:ess-819372564}.  In this
case, $v=819732654$ is the permutation associated to $w$ by Theorem~\ref{thm:dbifromadbi}.  Lemma~\ref{lem:adbigenerators} states that
\[
I_{(4,4,2)}\subseteq
I_v+\langle d_{\{4,5,6\},\{2,3,4\}}\rangle
\]
and that
\[
I_{(6,7,3)}\subseteq I_v+\langle d_{\{6,7,8,9\},\{4,5,6,7\}}\rangle.
\]

To illustrate the proof, consider $d_{\{4,5,7\},\{1,3,4\}}\in I_{(4,4,2)}$, so
$(p,q)=(4,4)$ and $B^\prime(p,q)=\{2,3,4\}$.  Note the essential set
box at $(4,4)$ is of type WZ.  Here $(p^\prime,q^\prime)=(2,2)$, so
$r^\prime=1$.  Hence $G=\{1\}$, and $b=2$.

Therefore we consider
\[
d_{\{2,4,5,7\},\{1,2,3,4\}}=\begin{vmatrix}
z_{2,1} & 1 & 0 & 0 \\
z_{4,1} & z_{4,2} & z_{4,3} & 1 \\
z_{5,1} & z_{5,2} & z_{5,3} & z_{5,4} \\
z_{7,1} & z_{7,2} & z_{7,3} & z_{7,4}\end{vmatrix}.
\]
Since the size $2$ minors
involving the columns 1 and 2 are in $I_{(2,2,1)}\subseteq I_v$,
$d_{\{2,4,5,7\},\{1,2,3,4\}}\in I_v$.

Then we expand $d_{\{2,4,5,7\},\{1,2,3,4\}}$ along row $b=2$, getting
\[
d_{\{2,4,5,7\},\{1,2,3,4\}}=
z_{2,1}\begin{vmatrix}
z_{4,2} & z_{4,3} & 1 \\
z_{5,2} & z_{5,3} & z_{5,4} \\
z_{7,2} & z_{7,3} & z_{7,4}\end{vmatrix}
-1\begin{vmatrix}
z_{4,1} & z_{4,3} & 1 \\
z_{5,1} & z_{5,3} & z_{5,4} \\
z_{7,1} & z_{7,3} & z_{7,4}\end{vmatrix}+0-0.
\]
The second term is $-d_{\{4,5,7\},\{1,3,4\}}$, and the first term
involves $d_{\{4,5,7\},\{2,3,4\}}$, which is in $I_{(4,4,2)}$ and has
smaller degree.  Hence by induction $d_{\{4,5,7\},\{1,3,4\}}\in
I_v+\langle d_{\{4,5,6\},\{2,3,4\}}\rangle$.\qed
\end{example}

Finally we finish the proof of half of our theorem.

\begin{theorem}
Suppose $w$ avoids $52431$, $52341$, $53241$, $35142$, $42513$, and
$351624$.  Then $X_w$ is a local complete intersection.
\end{theorem}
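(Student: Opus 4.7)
The plan is to combine Theorems~\ref{thm:adbipatterns}, \ref{thm:dbifromadbi}, \ref{thm:dbilci} and Lemma~\ref{lem:adbigenerators} to write $I_w$ as an ideal generated by exactly $\binom{n}{2}-\ell(w)$ explicit polynomials. Because the non-lci locus on $X_w$ is closed and $B$-stable, hence contains the Schubert point $e_{\id}$ whenever it is nonempty, it suffices to prove that $X_w$ is lci at $e_{\id}$. By Proposition~\ref{prop:alleqns} together with the observation from Section~2.1 that a codimension-$c$ subvariety of affine space cut out by $c$ equations is automatically a complete intersection, the goal reduces to generating $I_w$ by $\#D(w)=\binom{n}{2}-\ell(w)$ polynomials.

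First I would invoke the contrapositive of Theorem~\ref{thm:adbipatterns} to conclude that the avoidance hypothesis forces $w$ to be almost defined by inclusions, and then apply Theorem~\ref{thm:dbifromadbi} to produce the associated defined-by-inclusions permutation $v$ satisfying $E(v)=E(w)\setminus E''(w)$, with $r_v$ and $r_w$ agreeing on $E(v)$, and with the length identity $\ell(v)-\ell(w)=\#E''(w)$. Theorem~\ref{thm:dbilci} then provides $I_v=J_v$ as an ideal generated by the $\#D(v)$ polynomials $\{f_{(x,y)}\}_{(x,y)\in D(v)}$ constructed in Section~\ref{sec:dbisufficiency}.

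Next I would show
\[
I_w=I_v+\langle f_{(p,q)}\rangle_{(p,q)\in E''(w)}.
\]
Splitting the defining sum $I_w=\sum_{(p,q)\in E(w)}I_{(p,q,r_w(p,q))}$ according to whether the essential-set box lies in $E(v)$ or in $E''(w)$, the terms of the first kind sit inside $I_v$ because the ranks agree on $E(v)$, while the terms of the second kind sit inside $I_v+\langle f_{(p,q)}\rangle$ by Lemma~\ref{lem:adbigenerators}. The reverse containment is immediate: $I_v\subseteq I_w$ via $E(v)\subseteq E(w)$, and each $f_{(p,q)}=d_{A'(p,q),B'(p,q)}$ is by construction one of the generators of $I_{(p,q,r_w(p,q))}\subseteq I_w$.

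Finally the generator count reads
\[
\#D(v)+\#E''(w)=\binom{n}{2}-\ell(v)+\ell(v)-\ell(w)=\binom{n}{2}-\ell(w),
\]
which matches the codimension of $\mathcal{N}_{\id,w}$ inside $\Spec S$. Hence $\mathcal{N}_{\id,w}$ is a complete intersection, $X_w$ is lci at $e_{\id}$, and by the opening reduction $X_w$ is lci. Since all of the serious work has already been carried out in Section~3 and Subsection~\ref{sec:dbisufficiency}, the argument here is essentially bookkeeping; the only thing that requires care is verifying the generator count via the length identity of Theorem~\ref{thm:dbifromadbi}, and I therefore expect no genuine obstacle in this step.
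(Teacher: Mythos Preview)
Your proposal is correct and follows essentially the same argument as the paper's proof: invoke Theorem~\ref{thm:adbipatterns} to get that $w$ is almost defined by inclusions, apply Theorem~\ref{thm:dbifromadbi} to obtain the associated defined-by-inclusions permutation $v$, use Theorem~\ref{thm:dbilci} and Lemma~\ref{lem:adbigenerators} to generate $I_w$ by $\binom{n}{2}-\ell(w)$ polynomials, and conclude via Proposition~\ref{prop:alleqns} and the $B$-stability of the non-lci locus. Your verification of the reverse containment $I_v+\langle f_{(p,q)}\rangle\subseteq I_w$ is a nice addition of detail that the paper leaves implicit.
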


\begin{proof}
By Theorem~\ref{thm:adbipatterns}, $w$ is almost defined by
inclusions.  Now by Theorem~\ref{thm:dbifromadbi}, there exists a
permutation $v$ which is defined by inclusions such that
$\ell(v)-\ell(w)$ is the number of boxes in $E^{\prime\prime}(w)$.  By
Theorem~\ref{thm:dbilci}, $I_v$ is generated by $\binom{n}{2}-\ell(v)$
polynomials.  Furthermore, by Lemma~\ref{lem:adbigenerators}, $I_w$ is
generated by $I_v$ and $\ell(v)-\ell(w)$ polynomials, so $I_w$ is
generated by $\binom{n}{2}-\ell(w)$ polynomials.  Hence, by
Proposition~\ref{prop:alleqns}, $X_w$ is a local complete intersection
in a neighborhood of $e_\id$.  Hence $X_w$ is a local complete
intersection.
\end{proof}

\section{Necessity}
Our strategy for the proof of the reverse direction is as follows.  We
identify two infinite families and eleven isolated intervals $[u,v]$
such that the Kazhdan--Lusztig variety $\mathcal{N}_{u,v}$ is not lci,
and hence $X_v$ is not lci at $e_u$.  It follows from~\cite[Thm.\
2.6]{WYGov} that $X_w$ is not lci if $w$ interval contains $[u,v]$.
We show that, if $w$ contains one of the six given patterns, then $w$
will interval contain either one of the eleven intervals or an
interval from one of the two infinite families.  This is accomplished
using marked mesh patterns, which were previously defined by the first author in~\cite[Subsec.~4.1]{Ulf}.

We now list our two infinite families and eleven isolated intervals,
along with drawings as mesh patterns.

\textbf{Family $\mathbf A$} consists of intervals of the form
\[
[(a+1)a\cdots1(a+b+2)\cdots(a+2), (a+b+2)(a+1)a\cdots2(a+b+1)\cdots(a+2)1],
\]
where $a,b>0$ and $a>1$ or $b>1$. We list the first few members of the family in Figure~\ref{fig:famA}.
\begin{figure}[htbp]
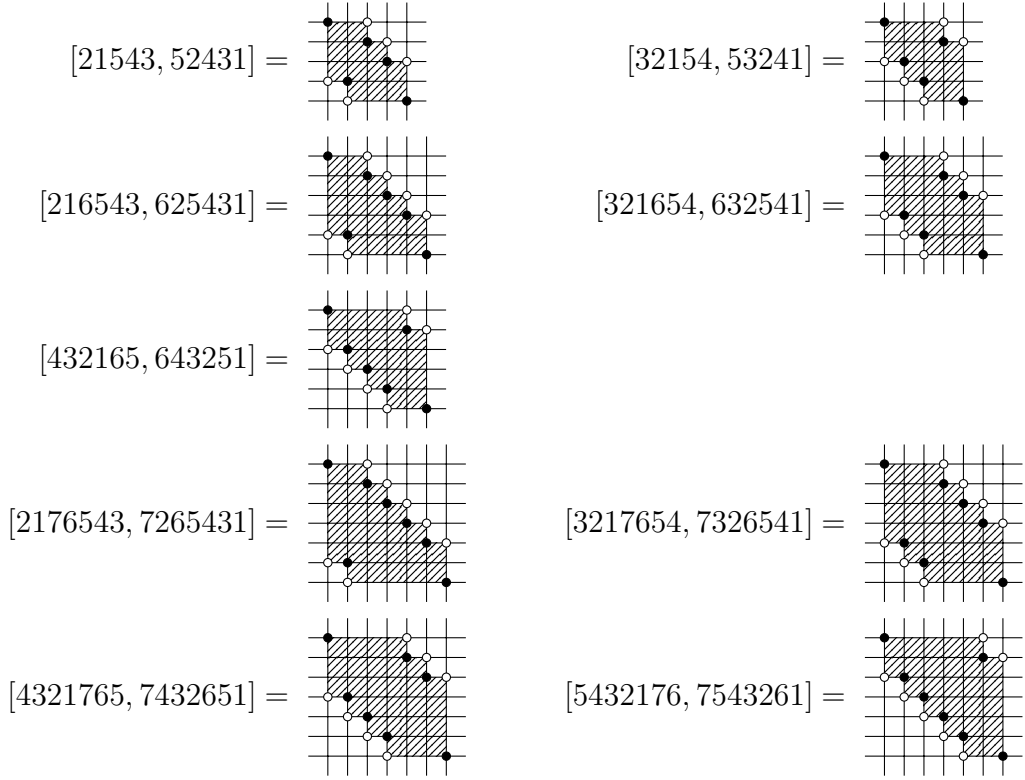

\begin{center}

\begin{align*}
[21543,52431] &=
\impattern{scale=0.75}{ 5 }{ 1/5, 2/2, 3/4, 4/3, 5/1 }
{ 1/2, 2/1, 3/5, 4/4, 5/3 }{ 1/2, 1/3, 1/4, 2/1, 2/2, 2/3, 2/4, 3/1, 3/2, 3/3, 4/1, 4/2 }
&[32154, 53241] &=
\impattern{scale=0.75}{ 5 }{ 1/5, 2/3, 3/2, 4/4, 5/1 }
{ 1/3, 2/2, 3/1, 4/5, 5/4 }{ 1/3, 1/4, 2/2, 2/3, 2/4, 3/1, 3/2, 3/3, 3/4, 4/1, 4/2, 4/3 } \\
[216543, 625431] &= 
\impattern{scale=0.75}{ 6 }{ 1/6, 2/2, 3/5, 4/4, 5/3, 6/1 }
{ 1/2, 2/1, 3/6, 4/5, 5/4, 6/3 }{ 1/2, 1/3, 1/4, 1/5, 2/1, 2/2, 2/3, 2/4, 2/5, 3/1, 3/2, 3/3, 3/4, 4/1, 4/2, 4/3, 5/1, 5/2 }
&[321654, 632541] &=
\impattern{scale=0.75}{ 6 }{ 1/6, 2/3, 3/2, 4/5, 5/4, 6/1 }
{ 1/3, 2/2, 3/1, 4/6, 5/5, 6/4 }{ 1/3, 1/4, 1/5, 2/2, 2/3, 2/4, 2/5, 3/1, 3/2, 3/3, 3/4, 3/5, 4/1, 4/2, 4/3, 4/4, 5/1, 5/2, 5/3 } \\
[432165, 643251] &=
\impattern{scale=0.75}{ 6 }{ 1/6, 2/4, 3/3, 4/2, 5/5, 6/1 }
{ 1/4, 2/3, 3/2, 4/1, 5/6, 6/5 }{ 1/4, 1/5, 2/3, 2/4, 2/5, 3/2, 3/3, 3/4, 3/5, 4/1, 4/2, 4/3, 4/4, 4/5, 5/1, 5/2, 5/3, 5/4 } \\
[2176543, 7265431] &= 
\impattern{scale=0.75}{ 7 }{ 1/7, 2/2, 3/6, 4/5, 5/4, 6/3, 7/1 }
{ 1/2, 2/1, 3/7, 4/6, 5/5, 6/4, 7/3 }{ 1/2, 1/3, 1/4, 1/5, 1/6, 2/1, 2/2, 2/3, 2/4, 2/5, 2/6, 3/1, 3/2, 3/3, 3/4, 3/5, 4/1, 4/2, 4/3, 4/4, 5/1, 5/2, 5/3, 6/1, 6/2 }
&[3217654, 7326541] &=
\impattern{scale=0.75}{ 7 }{ 1/7, 2/3, 3/2, 4/6, 5/5, 6/4, 7/1 }
{ 1/3, 2/2, 3/1, 4/7, 5/6, 6/5, 7/4 }{ 1/3, 1/4, 1/5, 1/6, 2/2, 2/3, 2/4, 2/5, 2/6, 3/1, 3/2, 3/3, 3/4, 3/5, 3/6, 4/1, 4/2, 4/3, 4/4, 4/5, 5/1, 5/2, 5/3, 5/4, 6/1, 6/2, 6/3 } \\
[4321765, 7432651] &=
\impattern{scale=0.75}{ 7 }{ 1/7, 2/4, 3/3, 4/2, 5/6, 6/5, 7/1 }
{ 1/4, 2/3, 3/2, 4/1, 5/7, 6/6, 7/5 }{ 1/4, 1/5, 1/6, 2/3, 2/4, 2/5, 2/6, 3/2, 3/3, 3/4, 3/5, 3/6, 4/1, 4/2, 4/3, 4/4, 4/5, 4/6, 5/1, 5/2, 5/3, 5/4, 5/5, 6/1, 6/2, 6/3, 6/4 }
&[5432176, 7543261] &=
\impattern{scale=0.75}{ 7 }{ 1/7, 2/5, 3/4, 4/3, 5/2, 6/6, 7/1 }
{ 1/5, 2/4, 3/3, 4/2, 5/1, 6/7, 7/6 }{ 1/5, 1/6, 2/4, 2/5, 2/6, 3/3, 3/4, 3/5, 3/6, 4/2, 4/3, 4/4, 4/5, 4/6, 5/1, 5/2, 5/3, 5/4, 5/5, 5/6, 6/1, 6/2, 6/3, 6/4, 6/5 }
\end{align*}

\caption{The first few members of the family $A$}
\label{fig:famA}
\end{center}
\end{figure}

\textbf{Family $\mathbf B$} consists of intervals of the form
\begin{align*}
[&(a+1)\cdots1(a+3)(a+2)(a+b+4)\cdots(a+4), \\
&(a+3)(a+1)\cdots2(a+b+4)1(a+b+3)\cdots(a+4)(a+2)],
\end{align*}
where $a,b\geq 0$ and $a+b\geq 1$. We list the first few members of the family in Figure~\ref{fig:famB}.
\begin{figure}[htbp]
\begin{center}
\begin{align*}
[13254, 35142] &=
\impattern{scale=0.75}{ 5 }{ 1/3, 2/5, 3/1, 4/4, 5/2 }
{ 1/1, 2/3, 3/2, 4/5, 5/4 }{ 1/1, 1/2, 2/1, 2/2, 2/3, 2/4, 3/2, 3/3, 3/4, 4/2, 4/3 }
&[21435, 42513] &=
\impattern{scale=0.75}{ 5 }{ 1/4, 2/2, 3/5, 4/1, 5/3 }
{ 1/2, 2/1, 3/4, 4/3, 5/5 }{ 1/2, 1/3, 2/1, 2/2, 2/3, 3/1, 3/2, 3/3, 3/4, 4/3, 4/4 } \\
[132654, 361542] &=
\impattern{scale=0.75}{ 6 }{ 1/3, 2/6, 3/1, 4/5, 5/4, 6/2 }
{ 1/1, 2/3, 3/2, 4/6, 5/5, 6/4 }{ 1/1, 1/2, 2/1, 2/2, 2/3, 2/4, 2/5, 3/2, 3/3, 3/4, 3/5, 4/2, 4/3, 4/4, 5/2, 5/3 }
&[214365, 426153] &=
\impattern{scale=0.75}{ 6 }{ 1/4, 2/2, 3/6, 4/1, 5/5, 6/3 }
{ 1/2, 2/1, 3/4, 4/3, 5/6, 6/5 }{ 1/2, 1/3, 2/1, 2/2, 2/3, 3/1, 3/2, 3/3, 3/4, 3/5, 4/3, 4/4, 4/5, 5/3, 5/4 } \\
[321546, 532614] &=
\impattern{scale=0.75}{ 6 }{ 1/5, 2/3, 3/2, 4/6, 5/1, 6/4 }
{ 1/3, 2/2, 3/1, 4/5, 5/4, 6/6 }{ 1/3, 1/4, 2/2, 2/3, 2/4, 3/1, 3/2, 3/3, 3/4, 4/1, 4/2, 4/3, 4/4, 4/5, 5/4, 5/5 } \\
[1327654, 3716542] &= 
\impattern{scale=0.75}{ 7 }{ 1/3, 2/7, 3/1, 4/6, 5/5, 6/4, 7/2 }
{ 1/1, 2/3, 3/2, 4/7, 5/6, 6/5, 7/4 }{ 1/1, 1/2, 2/1, 2/2, 2/3, 2/4, 2/5, 2/6, 3/2, 3/3, 3/4, 3/5, 3/6, 4/2, 4/3, 4/4, 4/5, 5/2, 5/3, 5/4, 6/2, 6/3}
&[2143765, 4271653] &=
\impattern{scale=0.75}{ 7 }{ 1/4, 2/2, 3/7, 4/1, 5/6, 6/5, 7/3 }
{ 1/2, 2/1, 3/4, 4/3, 5/7, 6/6, 7/5 }{ 1/2, 1/3, 2/1, 2/2, 2/3, 3/1, 3/2, 3/3, 3/4, 3/5, 3/6, 4/3, 4/4, 4/5, 4/6, 5/3, 5/4, 5/5, 6/3, 6/4 } \\
[3215476, 5327164] &=
\impattern{scale=0.75}{ 7 }{ 1/5, 2/3, 3/2, 4/7, 5/1, 6/6, 7/4 }
{ 1/3, 2/2, 3/1, 4/5, 5/4, 6/7, 7/6 }{ 1/3, 1/4, 2/2, 2/3, 2/4, 3/1, 3/2, 3/3, 3/4, 4/1, 4/2, 4/3, 4/4, 4/5, 4/6, 5/4, 5/5, 5/6, 6/4, 6/5 }
&[4321657, 6432715] &=
\impattern{scale=0.75}{ 7 }{ 1/6, 2/4, 3/3, 4/2, 5/7, 6/1, 7/5 }
{ 1/4, 2/3, 3/2, 4/1, 5/6, 6/5, 7/7 }{ 1/4, 1/5, 2/3, 2/4, 2/5, 3/2, 3/3, 3/4, 3/5, 4/1, 4/2, 4/3, 4/4, 4/5, 5/1, 5/2, 5/3, 5/4, 5/5, 5/6, 6/5, 6/6 }
\end{align*}

\caption{The first few members of the family $B$}
\label{fig:famB}
\end{center}
\end{figure}

We list the exceptional intervals in Figure~\ref{fig:except_ints}.
\begin{figure}[htbp]
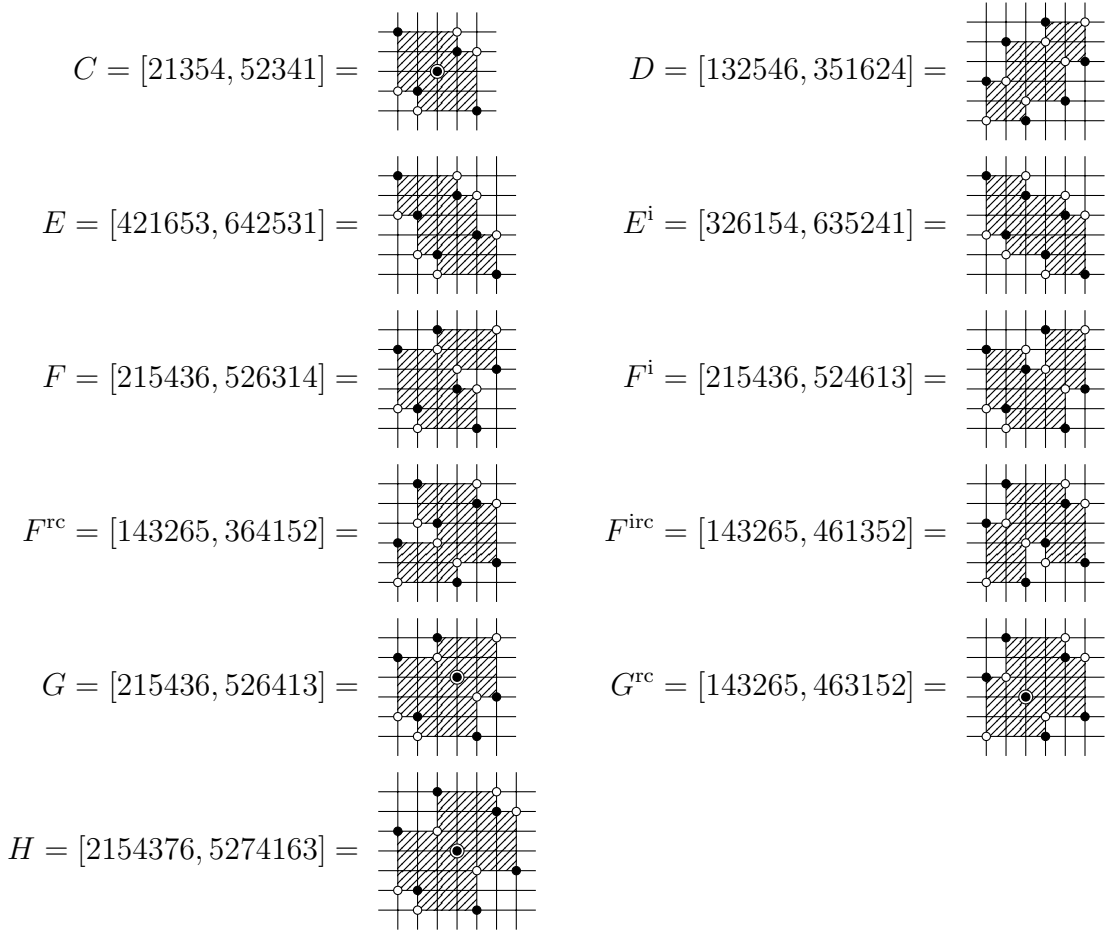

\begin{center}
\begin{align*}
C = [21354, 52341] &=
\imopattern{scale=0.75}{ 5 }{ 1/5, 2/2, 3/3, 4/4, 5/1 }
{ 1/2, 2/1, 4/5, 5/4 }{ 3/3 }{ 1/2, 1/3, 1/4, 2/1, 2/2, 2/3, 2/4, 3/1, 3/2, 3/3, 3/4, 4/1, 4/2, 4/3 }
&D = [132546, 351624] &=
\impattern{scale=0.75}{ 6 }{ 1/3, 2/5, 3/1, 4/6, 5/2, 6/4 }
{ 1/1, 2/3, 3/2, 4/5, 5/4, 6/6 }{ 1/1, 1/2, 2/1, 2/2, 2/3, 2/4, 3/2, 3/3, 3/4, 4/2, 4/3, 4/4, 4/5, 5/4, 5/5 } \\
E = [421653, 642531] &=
\impattern{scale=0.75}{ 6 }{ 1/6, 2/4, 3/2, 4/5, 5/3, 6/1 }
{ 1/4, 2/2, 3/1, 4/6, 5/5, 6/3 }{ 1/4, 1/5, 2/2, 2/3, 2/4, 2/5, 3/1, 3/2, 3/3, 3/4, 3/5, 4/1, 4/2, 4/3, 4/4, 5/1, 5/2 }
&E^{\rmi} = [326154, 635241] &=
\impattern{scale=0.75}{ 6 }{ 1/6, 2/3, 3/5, 4/2, 5/4, 6/1 }
{ 1/3, 2/2, 3/6, 4/1, 5/5, 6/4 }{ 1/3, 1/4, 1/5, 2/2, 2/3, 2/4, 2/5, 3/2, 3/3, 3/4, 4/1, 4/2, 4/3, 4/4, 5/1, 5/2, 5/3 } \\
F = [215436, 526314] &=
\impattern{scale=0.75}{ 6 }{ 1/5, 2/2, 3/6, 4/3, 5/1, 6/4 }
{ 1/2, 2/1, 3/5, 4/4, 5/3, 6/6 }{ 1/2, 1/3, 1/4, 2/1, 2/2, 2/3, 2/4, 3/1, 3/2, 3/3, 3/4, 3/5, 4/1, 4/2, 4/4, 4/5, 5/4, 5/5 }
&F^{\rmi} = [215436, 524613] &=
\impattern{scale=0.75}{ 6 }{ 1/5, 2/2, 3/4, 4/6, 5/1, 6/3 }
{ 1/2, 2/1, 3/5, 4/4, 5/3, 6/6 }{ 1/2, 1/3, 1/4, 2/1, 2/2, 2/3, 2/4, 3/1, 3/2, 3/3, 4/1, 4/2, 4/3, 4/4, 4/5, 5/3, 5/4, 5/5 } \\
F^{\rmrc} = [143265, 364152] &=
\impattern{scale=0.75}{ 6 }{ 1/3, 2/6, 3/4, 4/1, 5/5, 6/2 }
{ 1/1, 2/4, 3/3, 4/2, 5/6, 6/5 }{ 1/1, 1/2, 2/1, 2/2, 2/4, 2/5, 3/1, 3/2, 3/3, 3/4, 3/5, 4/2, 4/3, 4/4, 4/5, 5/2, 5/3, 5/4 }
&F^{\rmirc} = [143265, 461352] &=
\impattern{scale=0.75}{ 6 }{ 1/4, 2/6, 3/1, 4/3, 5/5, 6/2 }
{ 1/1, 2/4, 3/3, 4/2, 5/6, 6/5 }{ 1/1, 1/2, 1/3, 2/1, 2/2, 2/3, 2/4, 2/5, 3/3, 3/4, 3/5, 4/2, 4/3, 4/4, 4/5, 5/2, 5/3, 5/4 } \\
G = [215436, 526413] &=
\imopattern{scale=0.75}{ 6 }{ 1/5, 2/2, 3/6, 4/4, 5/1, 6/3 }
{ 1/2, 2/1, 3/5, 5/3, 6/6 }{ 4/4 }{ 1/2, 1/3, 1/4, 2/1, 2/2, 2/3, 2/4, 3/1, 3/2, 3/3, 3/4, 3/5, 4/1, 4/2, 4/3, 4/4, 4/5, 5/3, 5/4, 5/5 } \quad
&G^{\rmrc} = [143265, 463152] &=
\imopattern{scale=0.75}{ 6 }{ 1/4, 2/6, 3/3, 4/1, 5/5, 6/2 }
{ 1/1, 2/4, 4/2, 5/6, 6/5 }{ 3/3 }{ 1/1, 1/2, 1/3, 2/1, 2/2, 2/3, 2/4, 2/5, 3/1, 3/2, 3/3, 3/4, 3/5, 4/2, 4/3, 4/4, 4/5, 5/2, 5/3, 5/4 } \\
H = [2154376, 5274163] &=
\imopattern{scale=0.75}{ 7 }{ 1/5, 2/2, 3/7, 4/4, 5/1, 6/6, 7/3 }
{ 1/2, 2/1, 3/5, 5/3, 6/7, 7/6 }{ 4/4 }{ 1/2, 1/3, 1/4, 2/1, 2/2, 2/3, 2/4, 3/1, 3/2, 3/3, 3/4, 3/5, 3/6, 4/1, 4/2, 4/3, 4/4, 4/5, 4/6, 5/3, 5/4, 5/5, 5/6, 6/3, 6/4, 6/5 } 
\end{align*}
\caption{The exceptional intervals}
\label{fig:except_ints}
\end{center}
\end{figure}

Note the set of intervals listed is invariant under two symmetries.
The {\bf reverse complement} of a permutation $v$, an interval pattern
$[u,v]$, or a mesh pattern $(v,R)$ is obtained by rotating the graph
$G(v)$ along with the conditions from $R$ (or from $u$) by 180
degrees, or equivalently by conjugating both $u$ and $v$ by $w_0$ and
moving $R$ accordingly.  The {\bf inverse} of a permutation $v$ is the
permutation $v^{-1}$; this is equivalent to reflecting the graph
$G(v)$ along with the conditions across the line $y=x$.  One can of
course also apply both symmetries.  Geometrically, one can explain
this invariance by noting that
$\mathcal{N}_{u,v}\cong\mathcal{N}_{w_0uw_0,w_0vw_0}\cong\mathcal{N}_{u^{-1},v^{-1}}\cong\mathcal{N}_{w_0u^{-1}w_0,w_0v^{-1}w_0}.$
Note that $w$ avoids $v$ (or $[u,v]$ or $(v,R)$) if and only if the
reverse complement of $w$ avoids the reverse complement of $v$ (or of
$[u,v]$ or of $(v,R)$), and the same holds for the inverse and also for the
inverse of the reverse complement.  We will appeal to these symmetries
in the remainder of this section.

\begin{proposition} \label{prop:nonlciinterv}
For each of the intervals $[u,v]$ in Family A, Family B, and listed in
Figure~\ref{fig:except_ints}, the variety $\mathcal{N}_{u,v}$ is not
lci (precisely only at the origin $\mathbf{0}$).
\end{proposition}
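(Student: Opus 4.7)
The plan is to verify, for each interval $[u,v]$ in the list, that the Kazhdan--Lusztig variety $\mathcal N_{u,v}$ fails to be lci at the origin $\mathbf 0$ by applying Proposition~\ref{prop:alleqns} to obtain an explicit presentation and showing that the minimal number of generators of $I_{u,v}$ at the origin strictly exceeds the codimension $\binom{n}{2}-\ell(v)$. Since $\mathcal O_{\mathcal N_{u,v},\mathbf 0}$ is the quotient of the regular local ring $(S_u)_{(\mathbf 0)}$ by the localization of $I_{u,v}$, being lci at $\mathbf 0$ is precisely the statement that this ideal admits a generating set of cardinality equal to its height, so any excess rules it out. Throughout I would certify minimality via the grading on $S_u$ (recall $\deg z_{p,q}=p-u(q)>0$ since $(p,q)\in D(u)$): the generalized Pl\"ucker coordinates are homogeneous, so any collection of homogeneous generators lying in pairwise distinct graded components is automatically a minimal generating set. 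The parenthetical claim that the non-lci locus is exactly $\{\mathbf 0\}$ then follows because it is closed and invariant under the natural $T$-action on $\Omega^\circ_u$, and this $T$-action has the origin as its only fixed point in $\mathcal N_{u,v}$.

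Family~A admits a uniform treatment. A direct computation shows that for $v=(a+b+2)(a+1)\cdots 2(a+b+1)\cdots(a+2)1$ the essential set is $E(v)=\{(a+2,a+1)\}$ with rank one, so $X_v$ is cut out by the single condition that the SW $(b+1)\times(a+1)$ submatrix have rank at most one. Inspecting $M_u$ for the corresponding permutation $u=(a+1)\cdots 1(a+b+2)\cdots(a+2)$ verifies that this SW block consists entirely of algebraically independent variables $z_{p,q}$, with no $1$s or fixed zeros intruding. Hence $\mathcal N_{u,v}$ is isomorphic to the affine variety of $(b+1)\times(a+1)$ matrices of rank at most one, i.e., the affine cone over the Segre embedding of $\mathbb P^b\times\mathbb P^a$. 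This variety has codimension $ab$, whereas its defining ideal is minimally generated by the $\binom{b+1}{2}\binom{a+1}{2}$ generic $2\times 2$ minors; the equality $\binom{a+1}{2}\binom{b+1}{2}=ab$ with $a,b\geq 1$ forces $(a+1)(b+1)=4$ and hence $a=b=1$, which the Family~A hypothesis $\max(a,b)>1$ excludes. Therefore $\mathcal N_{u,v}$ is not lci in any case of Family~A.

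Family~B and the eleven exceptional intervals are treated in the same spirit but case by case. For each such $[u,v]$ I would compute $E(v)$, list the generalized Pl\"ucker generators of $I_{u,v}$ via Proposition~\ref{prop:alleqns}, use any zero-rank essential boxes to eliminate variables as linear equations, and reduce the higher-rank Pl\"ucker conditions to a minimal set by discarding generators expressible as $S_u$-combinations of previously-listed ones (typically multiples by entries of adjacent rows of $M_u$). The remaining generators form a combination of linear forms together with $2\times 2$ determinantal relations extracted from a small submatrix of $M_u$; in every case the total strictly exceeds $\binom{n}{2}-\ell(v)$ by an amount readable from the diagram. The interval $F=[215436,526314]$ is already treated by Example~\ref{ex:kleqns526314}, where $\mathcal N_{u,v}$ has codimension $3$ yet its ideal needs $5$ minimal generators. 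The main obstacle will be this combinatorial bookkeeping, in particular ruling out unexpected linear dependencies between the zero-rank linear generators and the quadratic determinantal generators uniformly over the Family~B parameters $(a,b)$; I would manage this by tracking the grading $\deg z_{p,q}=p-u(q)$ throughout the reduction so that each claimed generator sits in its own graded component, at which point minimality is immediate.
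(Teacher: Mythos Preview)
Your overall strategy---identify $\mathcal N_{u,v}$ explicitly via Proposition~\ref{prop:alleqns} and compare the minimal number of generators of $I_{u,v}$ to the codimension---is exactly what the paper does, and your treatment of Family~A is correct and matches the paper's identification of these varieties as generic rank-one determinantal varieties.  However, there are three genuine problems.

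First, your minimality criterion is false as stated: homogeneous generators lying in pairwise distinct graded components need \emph{not} be minimal.  The ideal $(x^2,x^3)\subset k[x]$ is a counterexample.  What you need is that no generator lies in the ideal generated by those of strictly smaller degree, and that is precisely the nontrivial content; distinct degrees only tell you that the images in $I/\mathfrak m I$ are in distinct graded pieces, not that they are nonzero.  For Family~A you implicitly rely instead on the classical fact that the $2\times 2$ minors of a generic matrix minimally generate the rank-one ideal, which is fine, but your proposed bookkeeping for Family~B and the exceptionals rests on the flawed criterion.

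Second, and more seriously, you have not actually carried out Family~B or any of the eleven exceptional intervals; you have only described a plan.  The paper does execute these: it identifies Family~B with rank-one $(a+b+2)\times 2$ matrices (citing Cortez and Manivel), groups $C,D,E,F$ together as the rank-one locus of a $3\times 3$ matrix with one corner removed (codimension $3$, five minimal generators), and writes out explicit equations for $G$ and $H$.  Without this work your proof is incomplete.

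Third, your argument that the non-lci locus equals $\{\mathbf 0\}$ is not valid.  $T$-invariance and closedness, together with $\mathbf 0$ being the unique $T$-fixed point, show only that any nonempty closed $T$-stable subset \emph{contains} $\mathbf 0$; it does not force such a subset to be $\{\mathbf 0\}$.  To conclude that every other point is lci you must show something further---for Families~A and~B the paper observes these varieties are smooth away from the origin, and for the exceptionals one appeals to direct computation.
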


\begin{proof}
The varieties $\mathcal{N}_{u,v}$ for the infinite families A and B
were determined independently by Cortez~\cite{Cor} and
Manivel~\cite{Man}.  For Family A, they are the varieties of
$(a+1)\times (b+1)$ matrices of rank at most 1; these are not lci except
in the case where $a=b=1$.  For Family B, they are the varieties of
$(a+b+2)\times 2$ matrices of rank at most 1; these are not lci unless
$a+b+2=2$.  In the case of Family A, this is immediate from the
definition of the ideal $I_{v,w}$ given in Subsection~\ref{ss:loc-KL-ee}.  For Family B,
this description requires a change in coordinates changing half the
variables by a sign.  Note that for both of these families,
$\mathcal{N}_{u,v}$ is only singular at the origin, and $X_u$ is in
fact an irreducible component of the singular locus of $X_v$, so some
generic singularity of $X_v$ is not lci.

For the intervals in C, D, E, and F, the variety $\mathcal{N}_{u,v}$
is isomorphic (after respectively 0, 2, 0, and 1 change in sign) to
the variety of $3\times 3$ ``matrices'' with a corner cut out of
``rank'' 1.  In other words, their coordinate ring is generated by the
$2\times 2$ ``determinants'' of the ``matrix''
\[
\begin{pmatrix}
a & b & \\
c & d & e \\
f & g & h
\end{pmatrix},
\]
namely $ad-bc$, $ag-bf$, $cg-df$, $ch-ef$, and $dh-eg$.  This is a
codimension 3 subvariety which is the vanishing locus of 5 linearly
independent degree 2 polynomials and hence is not lci.  Note that
this variety has two components in its singular locus which meet at
$\mathbf{0}$.

For the two intervals in G, the variety $\mathcal{N}_{u,v}$ is
isomorphic to one defined by equations $ae-bd$, $af-cd$, $bf-ce$,
$ag+bh+ci$, and $dg+eh+fi$ over the variables
\[\begin{pmatrix}
a & b & c\\
d & e & f \\
g & h & i
\end{pmatrix}.
\]
The equations state that the first two rows are dependent and both
orthogonal to the third row.  This is a codimension 3 subvariety which
is the vanishing locus of 5 polynomials and hence not lci.

For the interval in H, the variety $\mathcal{N}_{u,v}$ is isomorphic
to one defined by equations $ae-bd$, $af-cd$, $bf-ce$, $gk-hj$,
$gl-ij$, $hl-ik$, $ag+bh+ci$, $dg+eh+fi$, $aj+bk+cl$, and $dj+ek+fl$
over the variables
\[
\begin{pmatrix}
a & b & c\\
d & e & f \\
g & h & i \\
j & k & l
\end{pmatrix}.
\]
The equations say that the first two rows are dependent, the last two rows are dependent, and the first two rows are orthogonal to the last two rows.  This is a codimension 5 subvariety which is the vanishing locus of 10
polynomials and hence not lci.
\end{proof}
The calculations in the proof can be verified by hand as in Example~\ref{ex:kleqns526314} or by using the Macaulay 2
code accompanying~\cite{WYGov} currently available at the both authors' websites.

We believe that this list is not merely helpful in proving our
theorem but is actually the complete list specifying exactly the non-lci
loci of all Schubert varieties in the sense of~\cite[Thm. 2.6]{WYGov}.

\begin{theorem} \label{thm:nec}
If the permutation $w$ contains one of the patterns
$53241$, $52341$, $52431$, $35142$, $42513$, and $351624$,
then the Schubert variety $X_w$ is not a local complete intersection.
\end{theorem}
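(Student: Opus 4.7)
The plan is to combine Proposition~\ref{prop:nonlciinterv} with \cite[Cor.~2.7]{WYGov}, which states that if $w$ interval contains any interval $[u,v]$ whose Kazhdan--Lusztig variety $\mathcal{N}_{u,v}$ fails to be lci, then $X_w$ itself fails to be lci. Since Proposition~\ref{prop:nonlciinterv} already supplies the geometric input, the remaining content of Theorem~\ref{thm:nec} is purely combinatorial: every permutation containing one of the classical patterns $53241$, $52341$, $52431$, $35142$, $42513$, $351624$ must interval contain at least one of the intervals from Family A, Family B, or the exceptional list $C, D, E, E^{\rmi}, F, F^{\rmi}, F^{\rmrc}, F^{\rmirc}, G, G^{\rmrc}, H$.

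To attack this combinatorial claim I would use the formalism of marked mesh patterns. By \cite[Lemma 22]{Ulf}, each listed interval is equivalent to a marked mesh pattern, and those translations are exactly the diagrams in Figures~\ref{fig:famA}, \ref{fig:famB}, and~\ref{fig:except_ints}. The task is then, for each of the six forbidden classical patterns, to show that every occurrence in $w$ can be promoted to an occurrence of one of these marked mesh patterns. The natural device is a minimality argument: for each classical pattern I would fix an occurrence whose positions and values are lexicographically extremal, which forces the entries of $w$ outside the chosen positions to respect the shadings in the marked mesh diagrams. One then splits on how many additional entries of $w$ lie in the strips between consecutive values of the fixed occurrence. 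Absence of such flanking entries immediately produces an occurrence of the appropriate member of Family A or Family B; the presence of one or two such entries is what funnels the configuration into one of the exceptional intervals.

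The case analysis can be substantially reduced by exploiting the reverse-complement and inverse symmetries described just before Proposition~\ref{prop:nonlciinterv}. These two involutions act simultaneously on the six classical patterns and on the list of non-lci intervals (note that the list $C,D,E,E^{\rmi},F,F^{\rmi},F^{\rmrc},F^{\rmirc},G,G^{\rmrc},H$ is manifestly closed under both operations), so up to symmetry the work splits into roughly four principal cases: $53241$ and $52431$ as entry points into Family A with $C$ as the exceptional output; $35142$ and $42513$ as entry points into Family B with $D,E,F$ and their symmetric variants as exceptions; $52341$ as a separate case funneling into $C$ or $H$; and $351624$ as an entry point into $D$ or $E$.

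I expect the main obstacle to be the exceptional intervals. Families A and B are the generic components of the singular locus of $X_w$ identified independently in \cite{Cor,Man}, so producing them from an ``abundant'' occurrence of one of the six patterns is relatively clean. The exceptional intervals $C$ through $H$ exist precisely to cover the arrangements of extra $w$-entries that narrowly fail to yield a Family A or B occurrence, and verifying that every such near-miss configuration does embed one of them requires direct comparison with the marked mesh diagrams in Figure~\ref{fig:except_ints}. I expect this bookkeeping, rather than any single clever idea, to be where the bulk of the work sits.
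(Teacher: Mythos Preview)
Your overall strategy matches the paper's: reduce to combinatorics via Proposition~\ref{prop:nonlciinterv} and \cite{WYGov}, translate the intervals into marked mesh patterns, exploit the reverse-complement and inverse symmetries, and then execute a case analysis. The paper organizes the case analysis somewhat differently, though, and the differences are worth knowing before you start the bookkeeping.

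First, rather than treating $53241$, $52341$, $52431$ as separate entry points, the paper observes that containing any of them is equivalent to containing the single marked mesh pattern
\[
\patternsbm{scale=1}{ 4 }{ 1/4, 2/2, 3/3, 4/1 }{}{1/2/4/3/1}
\]
(an occurrence of $4231$ with at least one entry in the horizontal strip between the $2$ and the $3$), and handles all three at once in one lemma leading to Family~A or the exceptionals $C$, $E$, $E^{\rmi}$. Your proposed split of $52341$ off as a separate case leading to $C$ or $H$ is not how it falls out; $H$ actually arises only in the analysis of $35142$ (and, by symmetry, $42513$), not from $52341$. Similarly, the $351624$ case does not go directly to $D$ or $E$ but largely reduces back to the $35142$ case via a ``slab permutation'' argument. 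So your guesses about which classical patterns feed which exceptionals are off, and you should expect the routing to be less tidy than you sketched.

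Second, in place of a lexicographic-extremality argument, the paper relies on two named devices: the ``shading lemma'' of \cite{HJSV11}, which lets one add shaded boxes to a mesh pattern without changing the avoidance class, and a ``staircasing'' operation that picks out all NE-most (or SW-most) elements in a designated box to manufacture the long descending runs required by Family~A and Family~B. These replace your minimality choice and are what make the inductions on box contents terminate cleanly.
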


In Lemmas~\ref{lem:lcilem1}, \ref{lem:lcilem2}, and \ref{lem:lcilem3}, we show that, if a permutation $w$ contains one of the six classical patterns in the theorem, then it also interval contains one of the intervals in family $A$ or $B$ or one of the exceptional intervals in Figure~\ref{fig:except_ints}. Then the Schubert variety $X_w$ is not lci by Proposition~\ref{prop:nonlciinterv} and~\cite[Thm. 2.6]{WYGov}.

Note that a permutation $w$ contains at one of the
first three patterns in the theorem if and only if it contains the marked mesh pattern
\begin{equation} \label{lcimeshpatt1}
\patternsbm{scale=1}{ 4 }{ 1/4, 2/2, 3/3, 4/1 }{}{1/2/4/3/1}
\end{equation}
where the area marked with a $1$ must contain at least one element.

\begin{lemma} \label{lem:lcilem1}
A permutation $w$ contains the marked mesh pattern \eqref{lcimeshpatt1}
if and only if it contains a mesh pattern from family $A$, the
exceptional $C$, or one of the exceptionals $E$, $E^{\rmi}$.
\end{lemma}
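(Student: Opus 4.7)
My plan is to prove both implications of the stated equivalence. The easier direction asserts that each mesh pattern from the list contains the marked mesh pattern~\eqref{lcimeshpatt1}. Since~\eqref{lcimeshpatt1} is equivalent to containing at least one of the classical patterns $53241$, $52341$, or $52431$, it suffices to check that the underlying classical permutation of each listed mesh pattern contains one of these. For $C$ and for the two smallest members of family $A$, the underlying permutations \emph{are} $52341$, $53241$ (when $a=2,b=1$), and $52431$ (when $a=1,b=2$); for larger members of family $A$ with $a+b\geq 3$, one locates a $4231$ at positions $(1,a+1,a+b+1,a+b+2)$ with the marked entry chosen from the decreasing block at positions $2,\ldots,a$ or $a+2,\ldots,a+b$, so the condition $(a,b)\neq(1,1)$ ensures such an entry exists; and for $E$ and $E^{\rmi}$ the containment of $52431$ (and indeed also of $53241$) is a direct inspection. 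The shaded regions of the listed mesh patterns impose only further constraints on $w$; they do not obstruct the presence of the marked subpattern.

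For the converse direction, suppose $w$ contains the marked mesh pattern via indices $i_1<i_2<i_3<i_4$ forming a $4231$ together with a position $k$ satisfying $i_1<k<i_4$ and $w(i_2)<w(k)<w(i_3)$. The relative position of $k$ gives three sub-cases: $k\in(i_1,i_2)$, $k\in(i_2,i_3)$, or $k\in(i_3,i_4)$, corresponding respectively to the classical $53241$, $52341$, $52431$, which are the underlying permutations of family $A$ with $(a,b)=(2,1)$, the exceptional $C$, and family $A$ with $(a,b)=(1,2)$. To upgrade this classical occurrence to a mesh-pattern occurrence on our list, I would select the witness extremally --- for instance, $i_1$ as large as possible, $i_4$ as small as possible, and $w(i_2),w(i_3),w(k)$ similarly optimized subject to preserving the marked mesh shape. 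Under this extremal choice, any entry $w(j)$ that violates a shading region of the candidate mesh pattern either contradicts extremality (by giving a replacement producing a strictly more extreme occurrence) or can be incorporated alongside the existing five entries to yield an occurrence of a larger member of family $A$ or of one of the exceptionals $E$, $E^{\rmi}$.

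The main obstacle will be the case analysis in the converse direction. For each of the three sub-cases one must enumerate the possible positions of an offending entry $w(j)$ (classified by its column relative to $i_1,i_2,i_3,k,i_4$ and its value relative to the five established values), and in each case either derive a contradiction with extremality or exhibit the enlarged mesh pattern from our list whose shading conditions are still met. While no individual case is conceptually difficult, ensuring the enumeration is exhaustive and that the enlarged mesh pattern's own shading conditions are preserved by the extremal choice of witnesses constitutes the bulk of the verification. The two-parameter family $A$ provides enough flexibility to absorb most types of offending entries (by enlarging $a$ or $b$), while the exceptionals $E$ and $E^{\rmi}$ are needed precisely to handle configurations in which an offending entry interacts with the ``$5$'' and ``$1$'' at the extremes of the underlying $4231$, where simply increasing $a$ or $b$ would not suffice.
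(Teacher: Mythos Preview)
Your ``if'' direction is fine and matches the paper's one-line treatment. For the ``only if'' direction, however, what you have written is a plan rather than a proof: you propose to choose an extremal occurrence and then argue that any entry violating a shaded box either contradicts extremality or can be absorbed into a larger listed pattern, but you do not specify the extremality precisely (the order in which you optimize $i_1,i_4,w(i_2),w(i_3),w(k)$ matters), and you do not carry out any of the case analysis you yourself identify as ``the main obstacle.'' That analysis is where the content lies, and it is not routine---in particular, when several shaded regions are violated simultaneously, absorbing one offending entry can change which target pattern you are aiming for, and your single-extremal-witness framing does not obviously handle this.

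The paper's argument is organized differently and uses two tools you do not mention. First, instead of splitting on the column of the marked entry $k$ in one fixed occurrence, the paper splits on a \emph{global} condition: either some occurrence of \eqref{lcimeshpatt1} has its marked element confined to the leftmost (or by symmetry rightmost) of the three boxes, or every occurrence has elements in both the left-or-middle and middle-or-right boxes. In the latter case, the paper can invoke ``we are back in Case~1'' to discard sub-configurations, which prunes the analysis substantially. Second, the paper uses \emph{staircasing}---selecting all NE-most or SW-most elements of a region at once---together with the shading lemma of Hilmarsson et~al.\ to grow a partial occurrence into a genuine member of family~$A$ of the correct size. Staircasing is what cleanly handles regions containing many offending entries; your extremality heuristic is a relative of it but does not by itself tell you how to absorb an entire block. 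The appearance of $E$ and $E^{\rmi}$ in the paper is tied to specific empty-box configurations in this framework rather than to the vague ``interaction with the $5$ and $1$'' you describe.
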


In the proof below we will talk about ``staircasing'' boxes.  Depending on context, this will mean picking either all of the NE-most or all of the SW-most elements in the box.  A NE-most element $i$ is one which has no elements to its NE in the box; to be specific, this means for all $j$ in the box, either $j<i$ or $w(j)<w(i)$.  Similarly, a SW-most element $i$ is one with no elements to its SW, so for all $j$ in the box, either $j>i$ or $w(j)>w(i)$.  For example, suppose that we have an occurrence of the mesh pattern
\[
\pattern{scale=1}{ 3 }{ 1/3, 2/1, 3/2 }{2/1,3/1},
\]
in a permutation. Assume that for this particular occurrence there are exactly four elements in the box $(1,1)$, and furthermore, that these four elements have the pattern $1423$. By ``staircasing'' this box we mean adding the elements corresponding to $3$ and $4$ (in the box) and disregarding the other elements. This produces the pattern
\[
\pattern{scale=1}{ 5 }{ 1/5, 2/3, 3/2, 4/1, 5/4 }{4/1,5/1,4/2,5/2,4/3,5/3, 3/1,3/2,3/3, 2/2,2/3, 1/3}.
\]

In the following three lemmas we will need to use the ``shading lemma'' of Hilmarsson, J{\'o}nsd{\'o}ttir, Sigur{\dh}ard{\'o}ttir, {\'Ulfarsson} and Vi{\dh}arsd{\'o}ttir~\cite{HJSV11}, which gives conditions under which we can shade extra boxes in a particular mesh pattern without changing the set of permutations that avoid the pattern. We will use this lemma so frequently in the proofs below that when we add more shading to a pattern without giving a particular reason, the reader can assume this lemma is being used.  The arguments for this lemma are elementary.

\begin{proof}
The ``if'' part is easily verified. We consider two cases for the ``only if'' part.
\begin{enumerate}

\item \label{meshproof1-case1} There exists an occurence of \eqref{lcimeshpatt1} in which the second and third boxes of the marked region are empty or in which the first and second boxes of the marked region are empty.

The case where the first and second boxes of the marked region are empty follows from the other case by the reverse complement symmetry, since the set of stated mesh patterns is invariant under reverse complement.

Therefore we can assume that we have an occurrence of
the pattern on the left below, which implies an occurrence of the pattern on the right.
\begin{equation} \label{lcimeshpatt2}
\patternsbm{scale=1}{ 4 }{ 1/4, 2/2, 3/3, 4/1 }{2/2,3/2}{1/2/2/3/1} \qquad
\patternsbm{scale=1}{ 4 }{ 1/4, 2/2, 3/3, 4/1 }{2/1,3/1,2/2,3/2}{1/2/2/3/1}.
\end{equation}
If we choose the right-most element in the marked box in the occurrence of the pattern on the right, we have an occurrence of the pattern
\begin{equation*} 
\patternsbm{scale=1}{ 5 }{ 1/5, 2/3, 3/2, 4/4, 5/1 }{1/4, 2/2, 2/3, 3/1, 3/2, 3/3, 4/1, 4/2, 4/3}{}.
\end{equation*}
Consider the box with lower left corner with coordinates $(2,4)$. If this box is empty, we can staircase
boxes labelled $(1,3)$ and $(3,4)$, which will produce an occurrence of a member of family $A$. If box
$(2,4)$ is not empty, we choose the lowest element in that box. This will produce an occurrence of the pattern
\begin{equation*} 
\patternsbm{scale=1}{ 6 }{ 1/6, 2/3, 3/5, 4/2, 5/4, 6/1 }{1/4, 1/5, 2/2, 2/3, 2/4, 3/2, 3/3, 3/4, 4/1, 4/2, 4/3, 5/1, 5/2, 5/3}{}.
\end{equation*}
If all of the boxes $(1,3)$, $(2,5)$ and $(4,4)$ in this pattern are empty, we have an occurrence of the exceptional $E^\rmi$; otherwise we
staircase these boxes and produce a member of the family $A$.

\item \label{meshproof1-case3} In every occurence of \eqref{lcimeshpatt1}, either the second or third box of the marked region is non-empty and either the left or right box of the marked region is non-empty.

Since the second or third box is non-empty, we contain the pattern below.
\begin{equation} \label{lcimeshpatt5}
\patternsbm{scale=1}{ 4 }{ 1/4, 2/2, 3/3, 4/1 }{}{2/2/4/3/1}
\end{equation}
We choose the lowest element in these two boxes and we consider two cases -- depending on whether this element is in the first or second box.

\begin{enumerate}

\item \label{meshproof1-case3a}
If the lowest element is in box $(2,2)$ of the pattern in \eqref{lcimeshpatt5} we produce an occurrence of the pattern
\begin{equation} \label{lcimeshpatt6}
\patternsbm{scale=1}{ 5 }{ 1/5, 2/2, 3/3, 4/4, 5/1 }{1/2, 1/4, 2/2, 3/2, 3/3, 4/1, 4/2, 4/3}{}.
\end{equation}
Here we can assume the boxes $(1,3)$ and $(2,3)$ are empty since otherwise we have an occurrence of the pattern on the left in~\eqref{lcimeshpatt1},
which we handled above in case \eqref{meshproof1-case1}. Similarly, we can assume that the boxes $(2,1)$
and $(3,1)$ are empty. Finally the boxes $(2,4)$ and $(3,4)$ can be assumed to be empty; otherwise we
are back in case \eqref{meshproof1-case1} (with the first and second boxes empty). Having shaded these extra boxes in the pattern \eqref{lcimeshpatt6}, we
have produced the exceptional $C$.

\item \label{meshproof1-case3b}
If the lowest element is in box $(3,2)$ of the pattern \eqref{lcimeshpatt5} we produce an occurrence of the pattern
\begin{equation*}
\patternsbm{scale=1}{ 5 }{ 1/5, 2/2, 3/4, 4/3, 5/1 }{1/2, 1/4, 2/2, 2/3, 3/2, 3/3, 4/1, 4/2}{}.
\end{equation*}
Now we can assume that the box $(3,1)$ is empty since otherwise we would have an occurrence of the pattern in \eqref{lcimeshpatt2}. We consider two cases, depending on whether the box $(1,3)$ is empty or not.

\begin{enumerate}

\item The box $(1,3)$ is not empty.
This produces an occurrence of the pattern
\begin{equation*}
\patternsbm{scale=1}{ 5 }{ 1/5, 2/2, 3/4, 4/3, 5/1 }{1/2, 1/4, 2/2, 2/3, 3/1, 3/2, 3/3, 4/1, 4/2}{1/3/2/4/{1}}.
\end{equation*}
We can assume the marked box has a unique element that is both right-most and top-most because we would otherwise produce a member
of family $A$. We therefore have an occurrence of the pattern
\begin{equation*}
\patternsbm{scale=1}{ 6 }{ 1/6, 2/4, 3/2, 4/5, 5/3, 6/1 }{1/2, 1/4, 1/5, 2/2, 2/3, 2/4, 2/5, 3/2, 3/3, 3/4, 4/1, 4/2, 4/3, 4/4, 5/1, 5/2}{}.
\end{equation*}
If the box $(3,1)$ is non-empty, we can staircase it and produce a member of family $A$.
If this box is empty, but the box $(3,5)$ is non-empty, we can staircase it and produce a member of family $A$. Finally, if both of these boxes are empty we have the exceptional $E$.

\item Both of the boxes $(1,3)$ and $(3,1)$ are empty.
This produces an occurrence of the pattern
\begin{equation*} 
\patternsbm{scale=1}{ 5 }{ 1/5, 2/2, 3/4, 4/3, 5/1 }{1/2, 1/3, 1/4, 2/2, 2/3, 3/1, 3/2, 3/3, 4/1, 4/2}{}.
\end{equation*}
If both of the boxes $(2,1)$ and $(2,4)$ are empty, we have a member
of family $A$. The same is true if one of the boxes are empty. If both
boxes contain elements, we staircase them (taking NE-most elements in
$(2,1)$ and SW-most elements in $(2,4)$). Two things can occur: either
the stair produced in $(2,1)$ is completely to the left of the stair
in $(2,4)$, which produces a pattern from family $A$, or the
stairs will cross at some point, in which case the first element in the stair
for $(2,4)$ that is to the left of an element in the lower stair can
be used to form a pattern from the family $A$. \qedhere

\end{enumerate}

\end{enumerate}

\end{enumerate}

\end{proof}

\begin{lemma} \label{lem:lcilem2}
\begin{enumerate}

\item \label{lem:lcilem2:part1}
If a permutation $w$ contains the pattern $35142$, then it contains at least one of the mesh patterns in family $A$ or $B$ or one of the exceptionals $C$, $E$, $E^\rmi$, $F^\rmrc$, $F^\rmirc$, $G$, or $H$.

\item
If a permutation $w$ contains the pattern $42513$, then it contains at least one of the mesh patterns in family $A$ or $B$ or one of the exceptionals $C$, $E$, $E^\rmi$, $F$, $F^\rmi$, $G^\rmrc$, or $H$.

\end{enumerate}

\end{lemma}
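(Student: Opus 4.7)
The plan is to prove part (1) directly by case analysis mirroring the proof of Lemma~\ref{lem:lcilem1}, and then to deduce part (2) from part (1) via reverse complement symmetry. For the reduction, observe that $42513 = 35142^{\rmrc}$; that families $A$ and $B$ are individually closed under reverse complement; that the exceptionals $C$, $E$, $E^{\rmi}$, and $H$ are rc-invariant as interval patterns; and that the pairs $\{F^{\rmrc}, F\}$, $\{F^{\rmirc}, F^{\rmi}\}$, and $\{G, G^{\rmrc}\}$ swap under rc. Consequently, $w$ contains $42513$ iff $w^{\rmrc}$ contains $35142$, and the list of mesh patterns in part (2) is the rc-image of the list in (1), so applying part (1) to $w^{\rmrc}$ immediately yields part (2) for $w$.

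For part (1), the key observation is that the smallest member of family $B$ is precisely the mesh pattern $[13254, 35142]$, whose $11$ shaded cells are exactly those cells around an occurrence of $35142$ that must be empty to witness containment of the smallest $B$-pattern. If some occurrence of $35142$ in $w$ has all these cells empty, then $w$ contains the smallest member of family $B$ and we are done. Otherwise every occurrence of $35142$ places at least one extra element in some shaded cell, and I proceed by case analysis on which cell is occupied. In each case I apply the staircasing technique from Lemma~\ref{lem:lcilem1}: pick the NE-most (or SW-most) element of the offending cell and adjoin it to the occurrence. The resulting six- or seven-element extension corresponds to a larger mesh pattern with its own shading requirements. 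If those requirements are satisfied, the extension is already a member of family $A$, a larger member of family $B$, or one of the exceptionals, and we are done; otherwise we iterate the procedure on any newly-violated shaded cell. When no further extension is forced, the configuration is rigid, and it must match one of the exceptionals $C$, $E$, $E^{\rmi}$, $F^{\rmrc}$, $F^{\rmirc}$, $G$, or $H$. Throughout, we use the shading lemma of~\cite{HJSV11} to justify auxiliary shadings in the extended mesh patterns without altering the avoidance set.

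The principal difficulty is the combinatorial bookkeeping: with eleven shaded cells in the base pattern and additional shadings appearing at each extension, the case tree branches widely. Two symmetries help cut the work. First, $35142$ is its own inverse and the shading of $[13254, 35142]$ is invariant under coordinate swap, so the inverse symmetry identifies pairs of cases (indeed, $F^{\rmrc}$ and $F^{\rmirc}$ form one such pair, as do $F$ and $F^{\rmi}$ in part (2)). Second, a local reverse complement can be applied within subcases once they have been isolated. The most delicate point is recognising when a rigid configuration yields the exceptional $G$ rather than collapsing into a member of family $A$, and, analogously, identifying the seven-element configuration giving $H$; these require direct inspection of the mesh patterns recorded in Figure~\ref{fig:except_ints} against the extended configuration produced by the case analysis.
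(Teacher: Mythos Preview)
Your reduction of part~(2) to part~(1) by reverse complement is exactly what the paper does, and your symmetry bookkeeping is correct.

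For part~(1), however, your decomposition differs from the paper's in a way that matters. You propose to start from the eleven shaded cells of the smallest $B$-pattern $[13254,35142]$ and case on which of those cells is occupied. The paper instead begins by applying the shading lemma to an occurrence of $35142$ to shade only six boxes for free, and then focuses on the central $2\times 2$ region (boxes $(2,2),(2,3),(3,2),(3,3)$). The crucial shortcut is this: if that region contains \emph{two or more} elements, then $w$ contains the marked mesh pattern~\eqref{lcimeshpatt1}, and Lemma~\ref{lem:lcilem1} already handles that case, producing family~$A$, $C$, $E$, or $E^{\rmi}$. This single observation eliminates most of the branching you anticipate. What remains is the case of zero or one element in the central region; with zero, staircasing boxes $(1,1)$ and $(4,3)$ gives family~$B$; with one, there are four sub-cases (reduced to three by inverse symmetry), each of which is dispatched in a few lines and yields $F^{\rmrc}$, $G$, $H$, family~$A$, or family~$B$.

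Your outline is not wrong in principle, but as written it is a plan rather than a proof: you never actually execute the case analysis, and you acknowledge that identifying $G$ and $H$ ``require direct inspection'' without performing it. More importantly, you miss the leverage provided by Lemma~\ref{lem:lcilem1}, which is what makes the paper's argument short. If you attempted your eleven-cell case split directly, several of the branches would themselves reduce to~\eqref{lcimeshpatt1}, and you would end up reinventing that lemma's content inside this proof.
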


\begin{proof}
It suffices to prove part \eqref{lem:lcilem2:part1} since the other part
can be obtained by applying reverse-complement to everything. If a permutation
$w$ contains the pattern $35142$, then it will also contain the mesh pattern
\[
\patternsbm{scale=1}{ 5 }{ 1/3, 2/5, 3/1, 4/4, 5/2 }{1/2,1/3,2/1,2/4,3/1,3/4,
4/2}{}.
\]
Consider the region made up of the boxes $(2,2)$, $(2,3)$, $(3,2)$ and $(3,3)$. If this region contains more than one element, we will have an occurrence of the pattern in \eqref{lcimeshpatt1}. Then Lemma~\ref{lem:lcilem1} would imply what we are trying to prove. We can therefore assume that there is at most one element in this region.
\begin{enumerate}
\item If the region has no elements then by staircasing boxes $(1,1)$ and $(4,3)$ we produce an interval from family $B$.
\item If there is a single element in the region we need to look at four cases, depending on the box it belongs to. We can actually do just the cases where the element is in boxes $(2,2)$, $(2,3)$ and $(3,3)$ since the case of box $(3,2)$ follows from applying the inverse symmetry to the case involving box $(2,3)$.
\begin{enumerate}
\item The element is in box $(2,3)$. We get the mesh pattern
\[
\patternsbm{scale=1}{ 6 }{ 1/3, 2/6, 3/4, 4/1, 5/5, 6/2 }{1/2, 1/3, 1/4, 2/1, 2/2, 2/3, 2/4, 2/5, 3/1, 3/2, 3/3, 3/4, 3/5, 4/1, 4/2, 4/3, 4/4, 4/5, 5/2}{}.
\]
If box $(5,4)$ is not empty, we get the marked mesh pattern
in~\eqref{lcimeshpatt1}.  If box $(5,3)$ or $(1,1)$ is not empty, we
get a mesh pattern from family $B$ by staircasing both boxes. If all
four boxes are empty, we get the exceptional $F^\rmrc$.
\item The element is in box $(2,2)$. We get the mesh pattern
\[
\patternsbm{scale=1}{ 6 }{ 1/4, 2/6, 3/3, 4/1, 5/5, 6/2 }{1/2, 1/3, 1/4, 2/1, 2/2, 2/3, 2/4, 2/5, 3/1, 3/2, 3/3, 3/4, 3/5, 4/1, 4/2, 4/3, 4/4, 4/5, 5/2, 5/3}{}.
\]
If box $(5,4)$ is not empty, we get the mesh pattern in~\eqref{lcimeshpatt1}. We can therefore assume that box
is empty. If box $(1,1)$ is not empty, and staircasing the box only adds one element to the pattern, we get
the exceptional $H$. If staircasing adds more than one element, then we get an interval in family $A$. If both boxes are empty, we get the exceptional $G$.
\item The element is in box $(3,3)$. We get the mesh pattern
\[
\patternsbm{scale=1}{ 6 }{ 1/3, 2/6, 3/1, 4/4, 5/5, 6/2 }{1/2, 1/3, 1/4, 2/1, 2/2, 2/3, 2/4, 2/5, 3/1, 3/2, 3/3, 3/4, 3/5, 4/1, 4/2, 4/3, 4/4, 4/5, 5/2}{}.
\]
If the boxes $(1,1)$ and $(5,3)$ are empty, we get an interval from family $B$. If these boxes are not empty, the elements occupying them can also be used to produce an interval from the same family. 
\qedhere
\end{enumerate}
\end{enumerate}
\end{proof}

\begin{lemma} \label{lem:lcilem3}
If a permutation $w$ contains the pattern $351624$, then
it contains at least one of the mesh patterns in family $A$ or
$B$ or one of the exceptionals $C$, $D$, $E$, $F$, $G$ or $H$ (or their symmetries under inverse and reverse-complement).
\end{lemma}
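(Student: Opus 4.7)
The plan is to mimic the structure of Lemmas~\ref{lem:lcilem1} and~\ref{lem:lcilem2}: start from the classical occurrence of $351624$, record the shadings that come automatically from the pattern being ``as placed,'' and then do a careful case analysis on a small number of critical boxes, appealing to the shading lemma of~\cite{HJSV11} throughout. As a first reduction, I would note that whenever one of the key internal boxes contains two elements in decreasing order, we recover an instance of the marked mesh pattern~\eqref{lcimeshpatt1}, and hence by Lemma~\ref{lem:lcilem1} we already have one of the required patterns; whenever we contain $35142$ or $42513$ in a way that reuses the ambient shading of $351624$, Lemma~\ref{lem:lcilem2} finishes us. So in the remainder we may assume that every such ``escape to a shorter pattern'' fails, which shades many boxes in the $351624$ diagram.

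From the occurrence of $351624$ (with its six dots at $(1,3),(2,5),(3,1),(4,6),(5,2),(6,4)$) the automatic shadings are the empty rectangles forced by the classical pattern; I would add the further shadings that come from the two reductions above. First I would handle the ``interior'' rectangle lying between the $5$ and $6$ columns and between the $3$ and $4$ rows (roughly the boxes $(3,3),(3,4),(4,3),(4,4)$): if any of these contains more than one element the reduction to~\eqref{lcimeshpatt1} applies, so it contains at most one element. Splitting on which box (if any) contains this lone element, and on whether it is NE-most or SW-most among its neighbours, gives four main branches, each of which I would then ``staircase'' as in the proof of Lemma~\ref{lem:lcilem1}.

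Once the interior is controlled, the second round of case analysis is over the four corner rectangles (SW of the $3$, NE of the $4$, and the two rectangles adjacent to the entries $2$ and $6$ that give $351624$ its complication beyond $35142$). If any of these rectangles is sufficiently populated, staircasing its NE-most or SW-most chain produces either a member of family $A$ (when a descending chain appears) or a member of family $B$ (when the chain interacts with the outer $3{-}1$ or $6{-}4$ descent). If the rectangles are nearly empty, the configuration collapses onto one of the exceptional intervals: the pattern $D=[132546,351624]$ is the ``all rectangles empty'' case (giving exactly $351624$ with all shadings), while $C,E,F,G,H$ arise from the remaining low-population configurations, matched by inspection against the diagrams in Figure~\ref{fig:except_ints}; the symmetries under inverse and reverse-complement cut the number of branches one actually has to check roughly in half.

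The main obstacle will be the bookkeeping: one has to verify that the shadings forced by ``no reduction to Lemma~\ref{lem:lcilem1} or Lemma~\ref{lem:lcilem2}'' are exactly the shadings appearing in one of the target diagrams, and that the staircasing arguments (particularly the ``two chains either separate or cross, and crossing yields a family~$A$ pattern'') carry through when two adjacent corner boxes are simultaneously populated. I expect to reuse the crossing argument from case~\eqref{meshproof1-case3b} of Lemma~\ref{lem:lcilem1} essentially verbatim in two of the subcases here. Once each branch is pinned to a specific target pattern, the lemma follows by the same invocation of~\cite[Thm.~2.6]{WYGov} used in Theorem~\ref{thm:nec}.
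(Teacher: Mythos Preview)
Your overall strategy---reduce to Lemma~\ref{lem:lcilem1} whenever the marked pattern~\eqref{lcimeshpatt1} appears, reduce to Lemma~\ref{lem:lcilem2} whenever a usable $35142$ or $42513$ emerges, and then case-split on the remaining configurations---is exactly what the paper does. But the paper organizes the case analysis very differently and far more efficiently, and your sketch as written has a structural gap.

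The paper's key move is to look not at the single $2\times 2$ block $\alpha=\{(3,3),(3,4),(4,3),(4,4)\}$ but at the union $\alpha\cup\beta$, where $\beta=\{(2,2),(2,3),(3,2),(3,3)\}$. The observation is that if $w$ avoids~\eqref{lcimeshpatt1}, then the elements of $w$ landing in $\alpha\cup\beta$ must form a \emph{slab permutation} (one avoiding $213$, $123$, and $132$, equivalently the marked mesh pattern displayed just before the proof). This single structural constraint replaces your entire branch-by-branch analysis. With the slab in hand there are only three cases: the slab lies entirely in $\alpha$ or entirely in $\beta$ (either gives family~$B$, after staircasing boxes $(1,1)$ and $(5,5)$ if needed), or it straddles both, in which case one obtains a $35142$ occurrence and Lemma~\ref{lem:lcilem2} finishes (the other orientation of the straddle is handled by inverse symmetry). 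When $\alpha\cup\beta$ is empty, one gets the exceptional $D$ if the corners $(1,1)$ and $(5,5)$ are also empty, and family~$B$ otherwise. That is the whole proof.

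By contrast, your plan analyzes only $\alpha$ in the first pass, so elements in $\beta\setminus\alpha$ are unaccounted for; you would have to recover them in your ``corner rectangles'' round, and the sketch does not make clear that your four-plus-four branches are exhaustive. The crossing argument you anticipate lifting from Lemma~\ref{lem:lcilem1} is unnecessary once the slab observation is in place. I would replace your interior/corner decomposition by the $\alpha\cup\beta$ slab argument; it collapses the bookkeeping you were worried about to essentially nothing.
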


To prove this lemma, we need the following definition.
A {\bf slab permutation} is a permutation that avoids the patterns
$213, 123$, and $132$ or equivalently avoids the marked mesh pattern
\[
\patternsbm{scale=1}{ 2 }{ 1/1, 2/2 }{}{0/1/3/2/1}.
\]
A typical permutation of this sort is
\[
87564231 = \pattern{scale=1}{ 8 }{ 1/8, 2/7, 3/5, 4/6, 5/4, 6/2, 7/3, 8/1 }{}.
\]
These permutations are counted by the Fibonacci numbers, as was first shown by Simion and Schmidt~\cite[Prop.\ 15]{SimSchmi}.

\begin{proof}
If a permutation $w$ contains the pattern $351624$, then it will also
contain the mesh pattern
\[
\patternsbm{scale=1}{ 6 }{ 1/3, 2/5, 3/1, 4/6, 5/2, 6/4 }{1/2, 2/1, 2/4, 4/2, 4/5, 5/4}{}.
\]
Let $\alpha$ be the region consisting of the boxes $(3,3)$, $(3,4)$, $(4,3)$, $(4,4)$, and
$\beta$ be the region consisting of the boxes $(2,2)$, $(2,3)$, $(3,2)$, $(3,3)$.
If $w$ contains the mesh pattern \eqref{lcimeshpatt1} we are done by Lemma \ref{lem:lcilem1}
above. We therefore assume $w$ does not contain that mesh pattern. This implies that the elements in $\alpha \cup \beta$
must form a slab permutation. If the slab lies entirely in $\alpha$ or
entirely in $\beta$, then we get a member of family $B$.  (The boxes $(1,1)$ and $(5,5)$ may need to staircased.)  Otherwise we
can assume that the slab starts in the upper left corner of $\alpha$
and ends in the lower right corner of $\beta$.  (For the other possibility
we can use the inverse symmetry.)  This will give us an occurrence of the
pattern $35142$ which implies what we want by Lemma \ref{lem:lcilem2}.
Finally $\alpha \cup \beta$ could be empty. If the boxes $(1,1)$ and $(5,5)$ are both empty, we get the exceptional $D$. If either of these boxes is non-empty, we staircase both and get an interval in family $B$.
\end{proof}

\section{Remarks and applications}
\subsection{Singularity implications with patterns}
In this section, we show that each of the pattern avoidance criteria
that determine the geometric properties we have discussed imply one
another.

We fix some notation for the patterns we will be using.
Define the {\bf smooth patterns} as
\[
s = 3412, \quad s_c = 4231;
\]
the {\bf factorial patterns} as
\[
f = \patternsbm{scale=1}{ 4 }{ 1/3, 2/4, 3/1, 4/2 }{2/0, 2/1, 2/2, 2/3, 2/4}{}
, \quad f_c = 4231;
\]
the {\bf dbi patterns} as
\begin{align*}
&d_1 = 35142,\quad  d_2 = 42513, \quad
d_3 = 351624, \\
&d_c = 4231;
\end{align*}
the {\bf lci patterns} as
\begin{align*}
&\ell_1 = 35142, \quad \ell_2 = 42513, \quad
\ell_3 = 351624, \\
&\ell_{c_1} = 53241, \quad \ell_{c_2} = 52341, \quad
\ell_{c_3} = 52431;
\end{align*}
and the {\bf Gorenstein patterns} as
\begin{align*}
g_1 &= 31524\, \brur{1}{5},\, \brur{2}{3} 
     = \patternsbm{scale=1}{ 5 }{ 1/3, 2/5, 3/1, 4/4, 5/2 }{2/0,2/1,2/2,2/3,2/4,2/5,0/3,1/3,2/3,3/3,4/3,5/3}{}, \\
g_2 &= 42513\, \brur{1}{5},\, \brur{3}{4} 
     = \patternsbm{scale=1}{ 5 }{ 1/4, 2/2, 3/5, 4/1, 5/3 }{3/0,3/1,3/2,3/3,3/4,3/5,0/2,1/2,2/2,3/2,4/2,5/2}{}.
\end{align*}
Finally we define the two {\bf Gorenstein corner families}.  Let
\begin{align*}
 	\cG_1 
	&= 
	\left(
	\pattern{scale=1}{ 5 }{ 1/5, 2/3, 3/2, 4/4, 5/1 }{
	0/2,0/3,0/4,
	1/2,1/3,1/4,
	2/2,2/3,2/4,
	3/2,3/3,3/4,
	4/2,4/3,4/4,
	5/2,5/3,5/4}\,,
	\pattern{scale=1}{ 7 }{ 1/7, 2/4, 3/3, 4/2, 5/6, 6/5, 7/1 }{
	0/2,0/3,0/4,0/5,0/6,
	1/2,1/3,1/4,1/5,1/6,
	2/2,2/3,2/4,2/5,2/6,
	3/2,3/3,3/4,3/5,3/6,
	4/2,4/3,4/4,4/5,4/6,
	5/2,5/3,5/4,5/5,5/6,
	6/2,6/3,6/4,6/5,6/6,
	7/2,7/3,7/4,7/5,7/6}\,,
	\pattern{scale=1}{ 9 }{ 1/9, 2/5, 3/4, 4/3, 5/2, 6/8, 7/7, 8/6, 9/1 }{
	0/2,0/3,0/4,0/5,0/6,0/7,0/8,
	1/2,1/3,1/4,1/5,1/6,1/7,1/8,
	2/2,2/3,2/4,2/5,2/6,2/7,2/8,
	3/2,3/3,3/4,3/5,3/6,3/7,3/8,
	4/2,4/3,4/4,4/5,4/6,4/7,4/8,
	5/2,5/3,5/4,5/5,5/6,5/7,5/8,
	6/2,6/3,6/4,6/5,6/6,6/7,6/8,
	7/2,7/3,7/4,7/5,7/6,7/7,7/8,
	8/2,8/3,8/4,8/5,8/6,8/7,8/8,
	9/2,9/3,9/4,9/5,9/6,9/7,9/8}\,, \dotsc
	\right).
 \end{align*}
\begin{align*}
 	\cG_2
	&= 
	\left(
	\pattern{scale=1}{ 5 }{ 1/5, 2/2, 3/4, 4/3, 5/1 }{
	0/1,0/2,0/3,
	1/1,1/2,1/3,
	2/1,2/2,2/3,
	3/1,3/2,3/3,
	4/1,4/2,4/3,
	5/1,5/2,5/3}\,,
	\pattern{scale=1}{ 7 }{ 1/7, 2/3, 3/2, 4/6, 5/5, 6/4, 7/1 }{
	0/1,0/2,0/3,0/4,0/5,
	1/1,1/2,1/3,1/4,1/5,
	2/1,2/2,2/3,2/4,2/5,
	3/1,3/2,3/3,3/4,3/5,
	4/1,4/2,4/3,4/4,4/5,
	5/1,5/2,5/3,5/4,5/5,
	6/1,6/2,6/3,6/4,6/5,
	7/1,7/2,7/3,7/4,7/5,}\,,
	\pattern{scale=1}{ 9 }{ 1/9, 2/4, 3/3, 4/2, 5/8, 6/7, 7/6, 8/5, 9/1 }{
	0/1,0/2,0/3,0/4,0/5,0/6,0/7,
	1/1,1/2,1/3,1/4,1/5,1/6,1/7,
	2/1,2/2,2/3,2/4,2/5,2/6,2/7,
	3/1,3/2,3/3,3/4,3/5,3/6,3/7,
	4/1,4/2,4/3,4/4,4/5,4/6,4/7,
	5/1,5/2,5/3,5/4,5/5,5/6,5/7,
	6/1,6/2,6/3,6/4,6/5,6/6,6/7,
	7/1,7/2,7/3,7/4,7/5,7/6,7/7,
	8/1,8/2,8/3,8/4,8/5,8/6,8/7,
	9/1,9/2,9/3,9/4,9/5,9/6,9/7,}\,, \dotsc
	\right).
 \end{align*}

Then the Schubert variety $X_w$ is
\begin{enumerate}

\item smooth if and only if $w$ avoids the smooth patterns~\cite{LakSan},
\item factorial if and only if $w$ avoids the factorial patterns~\cite{BousBut},
\item defined by inclusions if and only if $w$ avoids the dbi patterns~\cite{GasRei},
\item a local complete intersection if and only if $w$ avoids the lci patterns (Theorem~\ref{thm:main}),
\item Gorenstein if and only if $w$ avoids the Gorenstein patterns,
and every associated Grassmannian permutation avoids every member of the corner families~\cite{WYGor}.

\end{enumerate}

It is clear that avoidance of the smooth patterns implies avoidance of the factorial
patterns.

\begin{lemma}
Avoidance of the factorial patterns implies avoidance of the dbi patterns.
\end{lemma}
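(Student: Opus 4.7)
The plan is to prove the contrapositive: any permutation $w$ containing one of the dbi patterns $4231$, $35142$, $42513$, or $351624$ must contain the factorial pattern $f$ or $f_c = 4231$. The $d_c = f_c = 4231$ case is immediate, so the real work lies in $35142$, $42513$, and $351624$. The key observation is that each of these three patterns, viewed as a permutation in its own right, already realizes the mesh pattern $f$: the $3412$ sub-occurrence at positions $(1,2,3,5)$ of $35142$ (similarly for $351624$) and at positions $(1,3,4,5)$ of $42513$ has its middle two positions adjacent in the pattern. When $w$ embeds such a dbi pattern with that adjacency preserved in $w$, an $f$-occurrence is produced immediately; the hard work is when other elements of $w$ occupy the gap between the two middle positions of the embedding.

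For $d_1 = 35142$, I would pick an embedding $j_1 < \cdots < j_5$ minimizing $j_3 - j_2$. If $j_3 = j_2 + 1$ then $(j_1,j_2,j_3,j_5)$ is an $f$-occurrence and we are done. Otherwise fix some $j'$ with $j_2 < j' < j_3$ and partition on the relative value of $w(j')$: when $w(j') > w(j_4)$ the embedding $(j_1, j', j_3, j_4, j_5)$ is itself a $35142$ with middle distance $j_3 - j' < j_3 - j_2$, contradicting minimality; when $w(j') < w(j_5)$ the embedding $(j_1, j_2, j', j_4, j_5)$ is a $35142$ with middle distance $j' - j_2$, likewise contradicting minimality; and when $w(j_5) < w(j') < w(j_4)$ the values at $(j_2, j', j_4, j_5)$ are in relative order $(4,2,3,1)$, realizing a $4231$ in $w$ and contradicting avoidance of $f_c$. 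These three value ranges exhaust the possibilities. The case $d_2 = 42513$ then follows from $d_1$ by reverse-complement symmetry, since $(35142)^{\rmrc} = 42513$ and both $f$ and $f_c$ are invariant under reverse-complement.

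For $d_3 = 351624$, I would run the analogous minimization over $351624$-embeddings, splitting into seven cases by the rank of an intermediate $w(j')$ relative to $\{w(j_1),\ldots,w(j_6)\}$: four cases produce strictly shorter $351624$-embeddings by swapping $j_2$ or $j_3$ with $j'$, the case $w(j') > w(j_4)$ produces a $35142$ at $(j_1, j', j_3, j_4, j_5)$ in $w$, and the two cases with $w(j_5) < w(j') < w(j_6)$ produce a $42513$ at $(j_2, j', j_4, j_5, j_6)$ in $w$; the last three outcomes then reduce to the $d_1$ and $d_2$ cases already settled. The main obstacle throughout is the bookkeeping of the six- and seven-element patterns that arise across the case analyses, but each individual case reduces to a short rank-ordering verification.
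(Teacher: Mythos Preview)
Your argument is correct and follows essentially the same strategy as the paper: prove the contrapositive by a case analysis on an element sitting inside a ``gap'' of the embedding, coupled with an extremal/inductive argument to reduce the gap. Your handling of $d_1$ matches the paper's almost exactly (you phrase it via minimality, the paper phrases it via looking at the letter immediately right of $5_w$, but these are the same idea), and your use of reverse-complement symmetry for $d_2$ is a clean shortcut where the paper just says ``similar.''

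The only noteworthy variation is in $d_3 = 351624$: you minimize the gap $j_3 - j_2$ (between the letters playing $5$ and $1$), whereas the paper inducts on the gap between $j_4$ and $j_5$ (between the letters playing $6$ and $2$). Both choices work, and both eventually reduce to a previously handled $d_1$ or $d_2$ occurrence. Your seven-way split is accurate: ranges (ii), (iii), (vi), (vii) shorten the embedding by replacing $j_2$ or $j_3$; range (i) yields a $35142$; ranges (iv), (v) yield a $42513$. The paper's three-way split is slightly more economical (one shortening case, one direct $f$-occurrence, one reduction to $d_1$), but this is a cosmetic difference rather than a different method.
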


\begin{proof}
Since $d_c = f_c$ it suffices to show that containment of $d_i$, for $i = 1,2,3$, implies
containment of $f$ or $f_c$. The cases $i = 1$ and $i = 2$ are similar so we only
do the former.

Assume we have an occurrence of $d_1 = 35142$ in a permutation
$w$. If the letters $5_w$ and $1_w$ are adjacent in the occurrence,
then we have an occurrence of $f$. (Here we write ``$5_w$'' instead of ``the letter in $w$ corresponding to $5$''.)
So assume they are not adjacent, and let the letter
to the right of $5_w$ be $a$. If $4_w < a$  or $a < 2_w$, we have an occurrence
of $f$. So assume that $2_w < a < 4_w$. Then the letters
$5_w$, $a$, $4_w$, $2_w$ form an occurrence of $f_c$.

Now assume we have an occurrence of $d_3 = 351624$ in a permutation
$w$. We prove this case by induction on the size of the gap between
$6_w$ and $2_w$. If these letters are adjacent we are done. If $5_w <
a$, we can move $6_w$ to $a$ and shorten the gap. If $a < 4_w$, then the
letters $3_w$, $5_w$, $6_w$, $a$, $4_w$ form an occurrence of $f$. Therefore
we can assume that $4_w < a < 5_w$. Then $3_w$, $5_w$, $1_w$, $a$,
$2_w$ form an occurrence of $d_1$, and we know that this implies
containment of either $f$ or $f_c$.
\end{proof}

Clearly we have that avoidance of the dbi patterns implies avoidance of the lci
patterns. So all that remains is:

\begin{lemma}
Avoidance of the lci patterns implies avoidance of the Gorenstein patterns
and the corner families for the associated Grassmannians.
\end{lemma}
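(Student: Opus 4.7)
My plan is to argue by contrapositive: show that if $w$ contains one of the Gorenstein patterns, or one of the members of the corner families, then $w$ must contain one of the six lci patterns. Because containment of any marked mesh pattern always forces containment of its underlying classical permutation, this reduces the lemma to the purely combinatorial task of verifying that each of $g_1$, $g_2$, and each member of $\cG_1$ and $\cG_2$ admits an ordinary classical subpattern equal to one of the six lci patterns.

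For the two Gorenstein patterns the reduction is immediate: the underlying classical permutation of $g_1$ (read from its mesh diagram) is $35142$, and that of $g_2$ is $42513$, both of which are themselves among the six lci patterns. Hence containment of either $g_1$ or $g_2$ classically implies containment of an lci pattern.

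For the two infinite corner families I would exhibit uniform five-element subsequences in each member. The $k$-th member of $\cG_1$ has underlying permutation
\[
(2k+3)\,(k+2)(k+1)\cdots 2\,(2k+2)(2k+1)\cdots(k+3)\,1,
\]
and selecting the entries at positions $1, 2, 3, k+3, 2k+3$ gives values $(2k+3), (k+2), (k+1), (2k+2), 1$, whose relative order is $53241$. Similarly, the $k$-th member of $\cG_2$ has underlying permutation
\[
(2k+3)\,(k+1)k\cdots 2\,(2k+2)(2k+1)\cdots(k+2)\,1,
\]
and selecting positions $1, 2, k+2, 2k+2, 2k+3$ gives values with relative order $52431$. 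Both $53241$ and $52431$ are lci patterns, so each corner family member contains an lci pattern.

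The step I expect to require the most care is converting these classical containment statements into the conclusion about associated Grassmannian permutations of $w$, since the Gorenstein characterization imposes corner family avoidance on the Grassmannian projections of $w$ rather than on $w$ directly. One must therefore check that an occurrence of a corner family member inside an associated Grassmannian permutation of $w$ lifts to an occurrence of an lci pattern inside $w$ itself. This is essentially a standard fact about the construction of associated Grassmannian permutations as projections of $w$, and once it is in hand the combinatorial verifications above complete the proof.
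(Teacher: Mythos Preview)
Your approach is essentially the same as the paper's: both argue by contrapositive and observe that containing any of $g_1$, $g_2$, or a member of $\cG_1$ or $\cG_2$ forces containment of the underlying classical permutation, which in turn contains one of the six lci patterns. The paper's proof is terser, simply asserting that containment of $g_i$ implies containment of $\ell_i$, containment of a $\cG_1$ pattern implies containment of $\ell_{c_1}=53241$, and containment of a $\cG_2$ pattern implies containment of $\ell_{c_3}=52431$; your explicit choice of positions makes this verification concrete and matches exactly.

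Your final paragraph raises a concern that turns out to be unnecessary. The mesh patterns displayed for $\cG_1$ and $\cG_2$ already \emph{are} conditions on $w$ itself: the full horizontal shading is precisely the device (from the first author's translation in~\cite{Ulf}) that encodes the associated Grassmannian constraint as a mesh pattern on $w$. Thus ``$w$ contains a member of $\cG_i$'' is a statement about $w$, not about a projection, and the implication ``mesh pattern containment $\Rightarrow$ classical containment of the underlying permutation'' applies directly to $w$ with no lifting required. The paper's proof omits any discussion of this point for that reason. Once you drop the lifting worry, your argument is complete and coincides with the paper's.
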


\begin{proof}
The containment of $g_i$ implies the containment of $\ell_i$ for $i = 1,2$.
Also if a permutation contains one of the patterns in the Gorenstein corner
family $\cG_1$ then it contains $\ell_{c_1}$.
Finally if a permutation contains one of the patterns in the Gorenstein corner
family $\cG_2$, then it contains $\ell_{c_3}$.
\end{proof}

Figure~\ref{fig:hierarchy} gives another way of viewing some of the patterns above. Where possible, the classical patterns have been grouped together into a single marked mesh pattern to emphasize the underlying classical patterns $3421$ and $4231$ that characterize the smooth varieties.

\begin{figure}[h]
\begin{center}
 \begin{tikzpicture}
 [scale = 1.25, place/.style = {circle,draw = green!50,fill = green!20,thick,minimum size = 5pt},auto]
 
 \node[] at (0,8) (sm)   {smooth};
 \node[] at (4,8) (sm1) {\patternsbm{scale=0.9}{ 4 }{ 1/3, 2/4, 3/1, 4/2 }{}{}};
 \node[] at (8,8) (sm2) {\patternsbm{scale=0.9}{ 4 }{ 1/4, 2/2, 3/3, 4/1 }{}{}};
 
 \node[] at (0,6) (fac) {factorial};
 \node[] at (4,6) (fac1) {\patternsbm{scale=0.9}{ 4 }{ 1/3, 2/4, 3/1, 4/2 }{2/0, 2/1, 2/2, 2/3, 2/4}{}};
 \node[] at (8,6) (fac2) {\patternsbm{scale=0.9}{ 4 }{ 1/4, 2/2, 3/3, 4/1 }{}{}};
 
 \node[] at (0,4) (dbi) {def.\ by incl.\ };
 \node[] at (3,4) (dbi1) {\patternsbm{scale=0.9}{ 4 }{ 1/3, 2/4, 3/1, 4/2 }{}{1/1/2/2/{},3/3/4/4/\scriptscriptstyle{1}}};
 \node[] at (5,4) (dbi1') {\patternsbm{scale=0.9}{ 4 }{ 1/3, 2/4, 3/1, 4/2 }{}{0/2/1/3/\scriptscriptstyle{1},1/0/2/1/\scriptscriptstyle{1}}};
 \node[] at (8,4) (dbi2) {\patternsbm{scale=0.9}{ 4 }{ 1/4, 2/2, 3/3, 4/1 }{}{}};
 
 \node[] at (0,2) (lci) {local compl.\ inters.\ };
 \node[] at (3,2) (lci1) {\patternsbm{scale=0.9}{ 4 }{ 1/3, 2/4, 3/1, 4/2 }{}{1/1/2/2/{},3/3/4/4/\scriptscriptstyle{1}}};
 \node[] at (5,2) (lci1') {\patternsbm{scale=0.9}{ 4 }{ 1/3, 2/4, 3/1, 4/2 }{}{0/2/1/3/\scriptscriptstyle{1},1/0/2/1/\scriptscriptstyle{1}}};
 \node[] at (8,2) (lci2) {\patternsbm{scale=0.9}{ 4 }{ 1/4, 2/2, 3/3, 4/1 }{}{1/2/4/3/1}};
 
 \node[] at (0,0) (gor) {Gorenstein};
 \node[] at (4,0) (gor1) {\patternsbm{scale=0.9}{ 4 }{ 1/3, 2/4, 3/1, 4/2 }{2/0, 2/1, 2/2, 2/3, 2/4, 0/2, 1/2, 3/2, 4/2}{1/1/2/2/{},3/3/4/4/\scriptscriptstyle{1}}};
 \node[] at (8,0) (gor2) {two inf.\ families of patterns};

 \draw[->] (sm) to (fac);
 \draw[->] (fac) to (dbi);
 \draw[->] (dbi) to (lci);
 \draw[->] (lci) to (gor);
 
 \draw[->] (sm1) to (fac1);
 \draw[->] (dbi1) to (lci1);
 \draw[->] (dbi1') to (lci1');
 \draw[->] (lci1) to (gor1);
 
 \draw[->] (sm2) to (fac2);
 \draw[->] (fac2) to (dbi2);
 \draw[->] (dbi2) to (lci2);
 \draw[->] (lci2) to (gor2);
 
 \end{tikzpicture}
  \caption{Properties of Schubert varieties described with pattern avoidance. An arrow between two patters means that avoidance of the first pattern implies avoidance of the second}
 \label{fig:hierarchy}
 \end{center}
\end{figure}
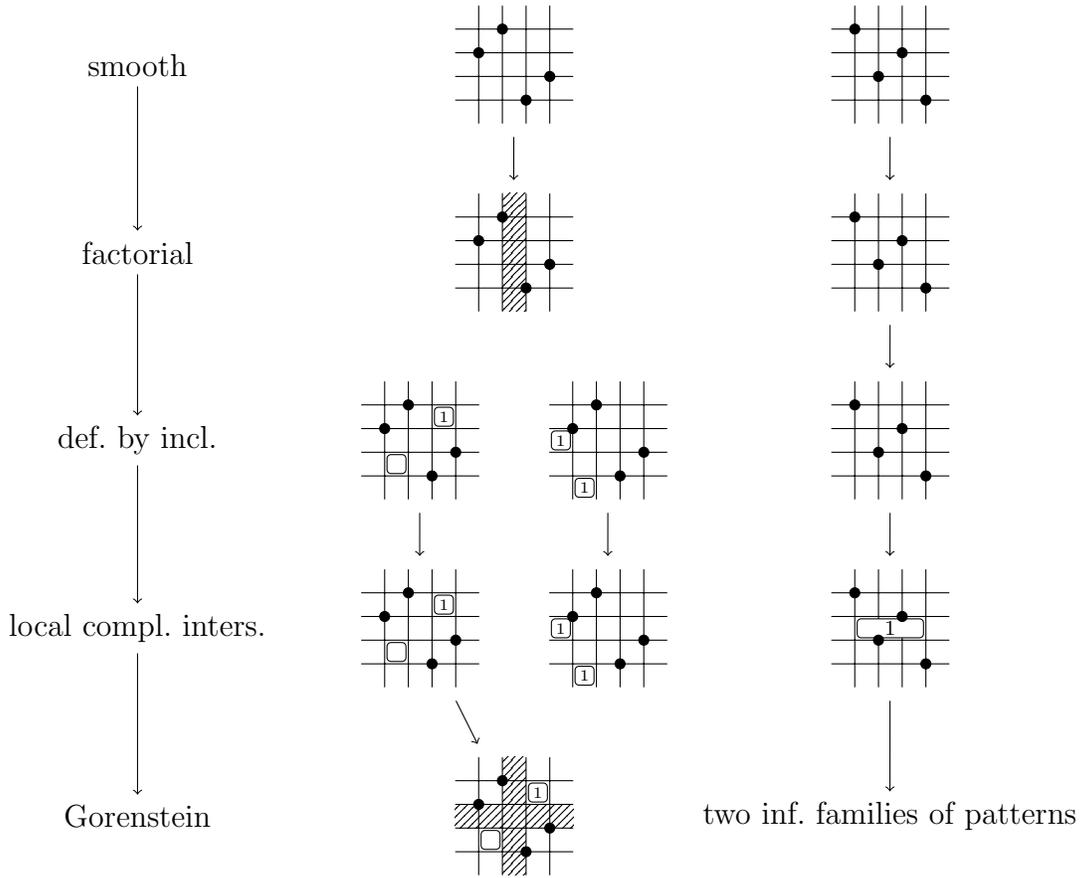

Since we are emphasizing the underlying patterns $3421$ and $4231$ it is worth noting that the first pattern is present in two other known properties of permutations. Billey and Warrington~\cite{BilWar01} introduced $321$-hexagon avoiding permutations as those permutations that avoid $321$ as well as four classical patterns from $S_8$. Alternatively, these can be characterized as avoiding $321$ and
\[
\patternsbm{scale=1}{ 4 }{ 1/3, 2/4, 3/1, 4/2 }{}{2/4/3/5/\scriptscriptstyle{1}, 0/2/1/3/\scriptscriptstyle{1}, 4/2/5/3/\scriptscriptstyle{1}, 2/0/3/1/\scriptscriptstyle{1}}.
\]
Tenner~\cite{T11} studies permutations with the property that the number of repeated letters in the reduced decomposition equals the number of occurrences of $321$ and $3412$. She shows that these are exactly the permutations avoiding $4321$ and $9$ classical patterns from $S_5$. It is easy to verify that these are the permutations that avoid $4321$ and
\[
\patternsbmTenner{scale=1}{ 4 }{1/3,2/4,3/1,4/2}{}{}{2/2/3/3/1}.
\]

\subsection{Lci matrix Schubert varieties}
Let $\pi: \mathrm{GL}_n \rightarrow G/B$ be the natural quotient map, and let
$i: \mathrm{GL}_n \rightarrow M_n$ be the inclusion of $\mathrm{GL}_n$ into the affine
space of $n\times n$ matrices.  The {\bf matrix Schubert variety}
$Y_w$ is the closure
\[
Y_w:=\overline{i(\pi^{-1}(X_{w_0w}))};
\]
it was
introduced by Fulton in~\cite{FulMSV}.

As Fulton notes, $Y_w$ is also the Kazhdan--Lusztig variety
$\mathcal{N}_{v_n,\widetilde{w}}$, where $v_n\in S_{2n}$ is the
permutation defined by $v(i)=n+1-i$ if $i\leq n$ and $v(i)=n+(2n+1-i)$
if $i>n$ and $\widetilde{w}$ is defined by
$\widetilde{w}(i)=n+w_0w(i)$ if $i\leq n$ and
$\widetilde{w}(i)=2n+1-i$ if $i>n$.  Furthermore, looking at the
matrices, it is clear that
$\mathcal{N}_{v_n,\widetilde{w}}\times\mathbb{C}^{n(n-1)}\cong\mathcal{N}_{\id,\widetilde{w}},$
since the generalized Pl\"ucker coordinates defining $I_{\widetilde{w}}$
do not involve any of the additional variables found in $M^{(\id)}$ but not
in $M^{(v_n)}$.  Therefore, $Y_w$ is lci if and only if
$X_{\widetilde{w}}$ is.  As a consequence, we have the following corollary.

\begin{corollary}
The matrix Schubert variety $Y_w$ is a local complete intersection if
and only if $w$ avoids 1342, 1432, 1423, 31524, 24153, and 426153.
\end{corollary}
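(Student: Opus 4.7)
The plan is to reduce to Theorem~\ref{thm:main} via the identification $Y_w \cong \mathcal{N}_{v_n,\widetilde w}$ discussed immediately above the corollary. That discussion already yields that $Y_w$ is lci if and only if $X_{\widetilde w}$ is lci, which by Theorem~\ref{thm:main} is in turn equivalent to $\widetilde w$ avoiding the six patterns $53241$, $52341$, $52431$, $35142$, $42513$, $426153$. The remaining task is therefore to translate these six avoidance conditions on $\widetilde w \in S_{2n}$ into conditions on $w \in S_n$.

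The translation exploits the rigid form of $\widetilde w$: positions $n+1,\ldots,2n$ contain the values $n,n-1,\ldots,1$ in strictly decreasing order, while the first half holds the values $\{n+1,\ldots,2n\}$ arranged so that, as a relative pattern, it is the complement of $w$. For any embedding of a length-$k$ pattern $p$ into $\widetilde w$, let $j$ be the number of embedding positions lying in the first half. Since all first-half indices precede all second-half indices and all first-half values exceed all second-half values, the first $j$ entries of $p$ must equal its top-$j$ values (in some order) and the last $k-j$ entries must equal its bottom $k-j$ values arranged in strictly decreasing order. This admissibility condition on the pair $(p,j)$ is a purely combinatorial test that eliminates most splits immediately.

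For each admissible split, the induced sub-pattern on the top-$j$ entries of $p$ must embed in the first half of $\widetilde w$, equivalently its complement must embed in $w$. Running the case analysis: $p=53241$ admits $j=5$ and $j=4$, forcing $w$ to contain $13425$ or $1342$; $52341$ and $52431$ yield similarly $14325$ or $1432$, and $14235$ or $1423$; while $35142$, $42513$, and $426153$ admit only $j=k$, yielding $31524$, $24153$, and $351624$ respectively. Since the induced length-$5$ and length-$6$ patterns $13425$, $14325$, $14235$, $351624$ each contain a shorter pattern already in the primary list, and since containing $426153$ forces $w$ to contain $1432$ on four of its six entries, the stated list $\{1342,\,1432,\,1423,\,31524,\,24153,\,426153\}$ characterizes the same set of permutations as our directly-derived list, and hence cuts out the lci locus. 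The only nontrivial step is the admissibility casework in this paragraph, and that is brief because only a handful of splits survive for each pattern.
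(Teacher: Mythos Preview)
Your proof is correct and follows exactly the route the paper intends: the discussion preceding the corollary already establishes that $Y_w$ is lci if and only if $X_{\widetilde w}$ is, and you carry out the remaining step---translating the six avoidance conditions on $\widetilde w$ into conditions on $w$---by the natural split-by-halves argument. Your observation that the sixth pattern you derive ($351624$) and the sixth pattern the paper states ($426153$) are each redundant in the presence of the five shorter patterns is a nice bonus; the minimal list is in fact $\{1342,\,1432,\,1423,\,31524,\,24153\}$.
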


One can also reformulate this statement in terms of the diagram of
$w$.  Doing so recovers a theorem of Jen-Chieh Hsiao~\cite[Theorem 5.2]{Hsiao}.

\subsection{Local $K$-theory and cohomology classes at the identity}

Given a smooth variety $X$ with an action of an algebraic torus
$T=(\mathbb{C}^*)^n$ satisfying certain conditions (including that the
fixed points are isolated), Goresky, Kottwicz, and Macpherson~\cite{GKM} show
that the map on equivariant cohomology
\[
H^*_T(X)\rightarrow \bigoplus_{p\in X^T} H^*_T(p)
\]
induced by the
inclusion of the fixed points into $X$ is an injection and describe
its image.  This result was extended by Knutson and
Rosu~\cite[Thm. A.5]{KnuRosu} to $K$-theory and the map
\[
K^*_T(X)\rightarrow \bigoplus_{p\in X^T} K^*_T(p).
\]
(All cohomology and $K$-theory will be with $\mathbb{Q}$
coefficients.)  Since $H^*_T(p)$ is isomorphic to
$\mathbb{Q}[t_1,\ldots,t_n]$ and $K^*_T(p)$ is isomorphic to
$\mathbb{Q}[t_1^\pm,\ldots,t_n^\pm]$, this theory provides in many
cases an easier method for calculating in the cohomology or $K$-theory
ring.  Indeed, for the case of the flag variety $G/B$, this theory was
implicitly anticipated by work of Kostant and Kumar~\cite{KosKumCoh,
  KosKumK}, who furthermore gave recursive formulas in
$\bigoplus_{p\in X^T} H^*_T(p)$ and $\bigoplus_{p\in X^T} K^*_T(p)$
for sets of elements of $H^*_T(G/B)$ and $K^*_T(G/B)$ which form bases
for these rings as free $H^*_T(p)$ and $K^*_T(p)$ modules.
Kumar~\cite{KumSmooth} later showed that these basis elements are
actually the classes of Schubert varieties.

The $T$-fixed points of $G/B$ are the Schubert points $e_u$ for all
$u\in S_n$.  Hence, the classes $[\mathcal{O}_{X_w}]\mid_{e_u}$ (both
in $H^*_T(e_u)$ and $K^*_T(e_u)$) determine the cohomology and
$K$-theory class of the Schubert variety $X_w$.  In addition, these
classes encode significant geometric information about $X_w$.  Our aim
in this section is to describe the consequences of our explicit
equations for lci Schubert varieties on the class
$[\mathcal{O}_{X_w}]\mid_{e_\id}$ in the case where $X_w$ is lci.  We
recover in part half of a theorem of Kumar in the case where $X_w$ is
smooth.  We will need to use the algebraic machinery of
$K$-polynomials and multidegrees, which we briefly describe here but
are described in greater detail in~\cite[Chap. 8]{MilSturm}
and~\cite[Section 2.3]{KnuMil}.

Let $\mathbf{a}_1,\ldots,\mathbf{a}_n$ denote an integral basis for the weight lattice
of $T=(\mathbb{C}^*)^n$, which we identify with $\mathbb{Z}^n$.  An
action of $T$ on a polynomial ring $S=\mathbb{C}[x_1,\ldots,x_k]$
(assuming the $x_i$ are eigenvectors for the action) corresponds to
the grading assigning each variable $x_i$ a degree
$\mathbf{\lambda}^{(i)}=\sum_{j=1}^n
\lambda^{(i)}_j\mathbf{a}_j\in\mathbb{Z}^n$, where $\mathbf{\lambda}^{(i)}$ is
the weight of the action of $T$ on $x_i$.  In the case where our
grading is positive, meaning that the $\mathbf{\lambda}^{(i)}$
generate a pointed cone in $\mathbb{Z}^n$, any finitely generated
graded $S$-module $M$ has a {\bf Hilbert series} (also known as formal
$T$-character)
\[
\mathcal{H}(M)=\sum_{\mathbf{\lambda}\in\mathbb{Z}^n} \dim M_\lambda \mathbf{t}^{\mathbf{\lambda}},
\]
where the
sum is over all weights of $T$, $M_\lambda$ is the weight space for
$\lambda$, and $\mathbf{t}^{\mathbf{\lambda}}$ denotes $\prod_{j=1}^n t_j^{\lambda_j}$ where $\mathbf{\lambda}=\sum_{j=1}^n \lambda_j\mathbf{a}_j$.

The {\bf $K$-polynomial} of $M$ can be defined as
\[
\mathcal{K}(M)=\mathcal{H}(M)\prod_{i=1}^k
(1-\mathbf{t}^{\mathbf{\lambda}^{(i)}}).
\]
Given a finite
$\mathbb{Z}^n$ graded free resolution
\[
0\leftarrow M\leftarrow E_0\leftarrow \cdots\leftarrow E_L\leftarrow 0
\]
of $M$ with
\[
E_k=\bigoplus_j S(-\mathbf{\lambda}^{(j,k)}),
\]
the $K$-polynomial satisfies
\[
\mathcal{K}(M)=\sum_{k=1}^L (-1)^k \sum_j \mathbf{\lambda}^{(j,k)},
\]
so
$\mathcal{K}(M)$ is a representative for the class of $M$ in
$K^*_T(\Spec S)$.

Define the {\bf multidegree} of $M$, denoted $\mathcal{C}(M)$, as the
sum of the lowest degree terms of
$\mathcal{K}(M,\mathbf{1}-\mathbf{t})$.  (This means we substitute
$1-t_i$ for $t_i$ for each $i\in\dbrac{1,n}$.)  Taking the lowest
degree terms in this way is, up to a sign change which conveniently
agrees with a difference between the usual conventions for
Grothendieck and Schubert polynomials, the same as taking the Chern
map from $K$-theory to cohomology, so $\mathcal{C}(M)$ can be regarded
as a representative for the class of $M$ in $H^*_T(\Spec S)$.

Given a graded complete intersection $S/I$, where $I$ is generated by
$f_1,\ldots,f_L$,
the $K$-polynomial
$\mathcal{K}(S/I)$ and multidegree $\mathcal{C}(S/I)$ are easily seen
to be
\[
\mathcal{K}(S/I)=\prod_{i=1}^L (1-\mathbf{t}^{\deg f_i})
\]
and hence
\[
\mathcal{C}(S/I)=\prod_{i=1}^L \left(\sum_{j=1}^n\langle \mathbf{a}_j, \deg f_i\rangle t_j\right),
\]
since the Koszul resolution is a free resolution of $S/I$.

Now we consider specifically the case of $T$-invariant subvarieties of
$G/B$, and more specifically $\Omega_{\id}$.  In this case,
$S=S_{\id}$, and since $T$ under its usual action on $G/B$ acts on the
matrix entry at $(i,j)$ with weight $\mathbf{a}_i-\mathbf{a}_j$ (where $\mathbf{a}_i$ denotes as usual
the homomorphism from $T$ to $C^*$ picking
out the $i$-th diagonal entry), it acts on the variable $z_{i,j}$, which is the coordinate function for the matrix entry at $(i,j)$, with
weight $\mathbf{a}_j-\mathbf{a}_i$.  Moreover,
$\Spec S$ equivariantly retracts onto $e_\id$, so we can identify
classes in $H^*_T(\Spec S)$ and $K^*_T(\Spec S)$ respectively with
classes in $H^*_T(e_\id)$ and $K^*_T(e_\id)$.

Hence we can identify $[\mathcal{O}_{X_w}]\mid_{e_\id}$ with
$\mathcal{C}(S/I_w)$ (for cohomology) or $\mathcal{K}(S/I_w)$ (for
$K$-theory).  Furthermore, it is a well-known folklore theorem
(see~\cite{Gol,WYGrob} for proofs for which none of the authors claim
originality) that, in the coordinates for the weight space we use,
\[
[\mathcal{O}_{X_w}]\mid_{e_v}=\Schub_{w_0w}(t_{v(1)},\ldots,t_{v(n)};t_n,\ldots,t_1)
\]
in cohomology, and
\[
[\mathcal{O}_{X_w}]\mid_{e_v}=\Groth_{w_0w}(t_{v(1)},\ldots,t_{v(n)};t_n,\ldots,t_1)
\]
in $K$-theory, where $\Schub_w$ and $\Groth_w$ are the double Schubert and
Grothendieck polynomials of Lascoux and Sch\"utzenberger~\cite{LSSchub,LSGroth}.  Hence we
can and will state our results purely as an identity between
polynomials.

Since we have explicit generators for the ideal defining
$\mathcal{N}_{\id,w}$ whenever $X_w$ is lci, we can calculate their
degrees to obtain explicit formulas for the local $K$-theory and
cohomology classes of $X_w$ at the identity.

\begin{proposition}
Suppose $X_v$ is defined by inclusions, $(x,y)\in D(v)$, $r=r_v(x,y)$,
and $f_{(x,y)}$ is the generator of $I_v$ defined in
Section~\ref{sect:dbisufficiency}.  Then $\deg
f_{(x,y)}=\mathbf{a}_{x+r}-\mathbf{a}_{y-r}$ (regardless of the choices made in defining
$f_{(x,y)}$).
\end{proposition}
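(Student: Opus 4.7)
The strategy is to compute $\deg f_{(x,y)}$ directly from the definition, exploiting the simple lower-triangular structure of $M_{\id}$ and the cancellation afforded by Lemma~\ref{lem:dbilocation}. The plan has three steps: check homogeneity of an arbitrary minor, evaluate the formula on the specific sets defining $f_{(x,y)}$, and confirm independence from the bookkeeping choices.

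First I would verify that any non-zero minor $d_{A,B}$ of $M_{\id}$ with $\#A = \#B$ is homogeneous of degree $\sum_{p \in A}\mathbf{a}_p - \sum_{q \in B}\mathbf{a}_q$. The matrix $M_{\id}$ carries $z_{p,q}$ (of degree $\mathbf{a}_p - \mathbf{a}_q$) strictly below the diagonal, $1$'s on the diagonal, and $0$'s above. Any non-vanishing term in the Laplace expansion corresponds to a bijection $\sigma \colon A \to B$ with $\sigma(p) \leq p$ for every $p \in A$; each factor $z_{p,\sigma(p)}$ contributes $\mathbf{a}_p - \mathbf{a}_{\sigma(p)}$, while each diagonal factor $M_{\id}(p,p) = 1$ contributes $\mathbf{a}_p - \mathbf{a}_p = 0$. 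The total degree telescopes to $\sum_{p \in A}\mathbf{a}_p - \sum_{q \in B}\mathbf{a}_q$ for every admissible $\sigma$, so the minor is homogeneous. A ``staircase'' $\sigma$ provides a non-zero term built entirely from $z$'s, so the minor itself is non-zero.

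Next I would plug in the sets defining $f_{(x,y)}$. If $r = 0$, then $A(x,y) = \{x\}$ and $B(x,y) = \{y\}$, so $\deg f_{(x,y)} = \mathbf{a}_x - \mathbf{a}_y = \mathbf{a}_{x+r} - \mathbf{a}_{y-r}$. If $r > 0$, the box $(x,y)$ lies in a rectangle $R_m$ with essential corner $(p_m, q_m)$ of rank $r_m = r$. By Lemma~\ref{lem:dbilocation}, $r = q_m - p_m + 1$, so $p_m + r - 1 = q_m$ and $q_m - r + 1 = p_m$, yielding
\[
A(x,y) = \dbrac{p_m, q_m} \cup \{x+r\}, \qquad B(x,y) = \{y-r\} \cup \dbrac{p_m, q_m}.
\]
Since $x \geq p_m$ and $y \leq q_m$, one has $x+r > q_m$ and $y-r < p_m$, so the common interval $\dbrac{p_m, q_m}$ is disjoint from the two singletons and cancels when the two sums are subtracted, leaving $\deg f_{(x,y)} = \mathbf{a}_{x+r} - \mathbf{a}_{y-r}$.

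Finally, for the parenthetical independence claim, I would observe that the formula depends only on $x$, $y$, and the intrinsic quantity $r = r_v(x,y)$. The only freedom in defining $f_{(x,y)}$ is the admissible ordering of $E^\prime(v)$, which may place $(x,y)$ in a rectangle with a different essential corner; but by Lemma~\ref{lem:dbipartition}, $R_m$ lies in a single connected component of $D(v)$, along which $r_v$ is constant, so $r_m = r$ regardless of the choice. I do not anticipate a serious obstacle: the homogeneity step is a straightforward check, and Lemma~\ref{lem:dbilocation} is precisely the tool needed to make the interval $\dbrac{p_m, q_m}$ cancel cleanly between the index sets.
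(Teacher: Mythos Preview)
Your proposal is correct and follows essentially the same route as the paper's proof: both compute $\deg d_{A,B}$ as the signed sum of the $\mathbf{a}_i$'s over $A$ and $B$, invoke Lemma~\ref{lem:dbilocation} to see that $\dbrac{p_m,p_m+r-1}=\dbrac{q_m-r+1,q_m}$, and observe that this common interval cancels to leave only $\mathbf{a}_{x+r}-\mathbf{a}_{y-r}$. Your added homogeneity and non-vanishing checks, and your explicit discussion of why the answer is independent of the admissible ordering of $E'(v)$, are reasonable elaborations that the paper omits, but the core argument is the same.
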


\begin{proof}
In our grading, $z_{i,j}$ has degree $\mathbf{a}_j-\mathbf{a}_i$.  Hence the degree of
$d_{A,B}$ is $\sum_{j\in B} \mathbf{a}_j - \sum_{i\in A} \mathbf{a}_i$.

Given $(x,y)\in D(v)$, $f_{(x,y)}$ is defined to be $d_{A(x,y),B(x,y)}$ where
\[
A(x,y)=\dbrac{p,p+r-1}\cup \{x+r\},
\]
\[
B(x,y)=\{y-r\}\cup\dbrac{q-r+1,q},
\]
and $(p,q)$ is some essential set
box NE of $(x,y)$ and in the same connected component of the diagram.
At first glance it appears that the degree of $f_{(x,y)}$ depends on
the choice of $(p,q)$, but Lemma~\ref{lem:dbilocation} tells us that
(except in the case where $r=0$ and this is irrelevant), $r=q-p+1$, so
$p=q-r+1$ and $p+r-1=q$.  Hence,
\[
\deg f_{(x,y)}=\mathbf{a}_{y-r}+\sum_{i=0}^{r-1} \mathbf{a}_{q-i}-\mathbf{a}_{x+r}-\sum_{i=0}^{r-1} \mathbf{a}_{p+i} = \mathbf{a}_{y-r}-\mathbf{a}_{x+r}. \qedhere
\]
\end{proof}

\begin{proposition}
Suppose $X_w$ is lci, $(p,q)\in E^{\prime\prime}(w)$, $r=r_w(p,q)$,
and $f_{(p,q)}$ is as defined in Section~\ref{sect:nondbisufficiency}.
Then
\[
\deg f_{(p,q)}=\sum_{i=0}^{r} (\mathbf{a}_{q-r}-\mathbf{a}_{p+r}).
\]
\end{proposition}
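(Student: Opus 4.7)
The plan is to compute the weight of $f_{(p,q)}$ by direct substitution into the grading on $S$. Unlike the defined-by-inclusions case of the previous proposition, the polynomial $f_{(p,q)}$ is by construction a \emph{single} generalized Pl\"ucker coordinate, namely $f_{(p,q)}=d_{A^\prime(p,q),B^\prime(p,q)}$ with
\[
A^\prime(p,q)=\dbrac{p,p+r}\quad\text{and}\quad B^\prime(p,q)=\dbrac{q-r,q},
\]
so there are no choices whose effect on the degree must be reconciled, and correspondingly no analogue of Lemma~\ref{lem:dbilocation} will be needed.

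First I would recall from the discussion preceding the statement that, under the $T$-action on $\Spec S$ giving the matrix entry at $(i,j)$ weight $\mathbf{a}_i-\mathbf{a}_j$, the coordinate $z_{i,j}$ has weight $\mathbf{a}_j-\mathbf{a}_i$. Since every term of the determinantal expansion of $d_{A,B}$ consists of exactly one variable from each row in $A$ and each column in $B$ (counting the fixed $1$'s as weight $0$), the polynomial $d_{A,B}$ is homogeneous of weight
\[
\deg d_{A,B}=\sum_{j\in B}\mathbf{a}_j-\sum_{i\in A}\mathbf{a}_i,
\]
exactly as in the previous proposition.

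With this formula in hand, the rest is arithmetic: substituting $A=\dbrac{p,p+r}$ and $B=\dbrac{q-r,q}$ gives
\[
\deg f_{(p,q)}=\sum_{j=q-r}^{q}\mathbf{a}_j-\sum_{i=p}^{p+r}\mathbf{a}_i=\sum_{i=0}^{r}\bigl(\mathbf{a}_{q-i}-\mathbf{a}_{p+i}\bigr),
\]
which is the claimed formula after reindexing.

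There is no genuine obstacle; the only thing to check carefully is that the weight assignment on $z_{i,j}$ does make $d_{A,B}$ homogeneous of the stated weight, and this follows immediately from the fact that in each monomial of the Leibniz expansion the row and column indices used are exactly $A$ and $B$ (with the contributions of the deterministic entries $1$ in $M_{\id}$ being trivial). If anything, the computation suggests a more general uniform statement: for any box $(p,q)\in E(w)$ of an lci permutation, the $T$-weight of the corresponding minimal generator is an integer combination of the simple root weights $\mathbf{a}_j-\mathbf{a}_i$ read directly off the row- and column-intervals of its defining rectangle.
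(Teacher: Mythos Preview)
Your proof is correct and follows essentially the same approach as the paper: recall that $f_{(p,q)}=d_{A^\prime(p,q),B^\prime(p,q)}$ with $A^\prime(p,q)=\dbrac{p,p+r}$ and $B^\prime(p,q)=\dbrac{q-r,q}$, apply the weight formula $\deg d_{A,B}=\sum_{j\in B}\mathbf{a}_j-\sum_{i\in A}\mathbf{a}_i$, and read off the answer. The paper's proof is even more terse (``the proposition follows immediately''), and your computed expression $\sum_{i=0}^{r}(\mathbf{a}_{q-i}-\mathbf{a}_{p+i})$ is indeed what is used in the subsequent corollary---the printed statement's $\mathbf{a}_{q-r}-\mathbf{a}_{p+r}$ is evidently a typo for $\mathbf{a}_{q-i}-\mathbf{a}_{p+i}$.
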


\begin{proof}
The polynomial $f_{(p,q)}$ is defined as $d_{A^\prime(p,q), B^\prime(p,q)}$ where
\[
A^\prime(p,q)=\dbrac{p,p+r}
\]
and
\[
B^\prime(p,q)=\dbrac{q-r,q}.
\]
The proposition follows immediately.
\end{proof}

The corollaries on follow immediately from the propositions and the
discussion above.

\begin{corollary}
\label{cor:dbilocalschub}
Suppose $X_v$ is defined by inclusions.  Then
\[
\mathcal{K}(S/I_v)=\Groth_{w_0v}(t_1,\ldots,t_n;t_n,\ldots,t_1)=\prod_{(x,y)\in D(v)} (1-t_{y-r_v(x,y)}/t_{x+r_v(x,y)}),
\]
and
\[
\mathcal{C}(S/I_v)=\Schub_{w_0v}(t_1,\ldots,t_n;t_n,\ldots,t_1)=\prod_{(x,y)\in D(v)} (t_{y-r_v(x,y)}-t_{x+r_v(x,y)}).
\]
\end{corollary}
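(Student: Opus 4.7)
The plan is to combine Theorem~\ref{thm:dbilci}, the preceding proposition on $\deg f_{(x,y)}$, and the general formulas for $K$-polynomials and multidegrees of complete intersections. The key observation is that everything has already been set up: we have explicit generators for $I_v$, we know their (equivariant) degrees, and we have the folklore identification of local classes with double Schubert and Grothendieck polynomials recalled just before the corollary.

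First I would invoke Theorem~\ref{thm:dbilci} to conclude that, when $X_v$ is defined by inclusions, $I_v = J_v = \langle f_{(x,y)} \rangle_{(x,y) \in D(v)}$. The number of generators is $\#D(v) = \binom{n}{2} - \ell(v)$, which equals the codimension of $\mathcal{N}_{\id,v}$ in $\Spec S$. Hence by the criterion recalled in Section~2.1 the $f_{(x,y)}$ form a regular sequence, so the Koszul complex gives a free $T$-equivariant resolution of $S/I_v$. From the discussion of $K$-polynomials this yields
\[
\mathcal{K}(S/I_v) = \prod_{(x,y) \in D(v)} \bigl(1 - \mathbf{t}^{\deg f_{(x,y)}}\bigr).
\]

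Next I would substitute the formula from the preceding proposition: $\deg f_{(x,y)} = \mathbf{a}_{y-r_v(x,y)} - \mathbf{a}_{x+r_v(x,y)}$, so $\mathbf{t}^{\deg f_{(x,y)}} = t_{y-r_v(x,y)}/t_{x+r_v(x,y)}$. This gives the product formula for $\mathcal{K}(S/I_v)$. For the multidegree, I would take the lowest-degree terms of $\mathcal{K}(S/I_v)$ after the substitution $t_i \mapsto 1 - t_i$; each factor $1 - (1-t_{y-r})/(1-t_{x+r}) = (t_{y-r} - t_{x+r})/(1-t_{x+r})$ contributes the linear term $t_{y-r_v(x,y)} - t_{x+r_v(x,y)}$ at lowest order, yielding the stated product. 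Equivalently, one uses the formula $\mathcal{C}(S/I) = \prod_i \sum_j \langle \mathbf{a}_j, \deg f_i \rangle t_j$ directly, noting that $\langle \mathbf{a}_j, \mathbf{a}_{y-r} - \mathbf{a}_{x+r} \rangle = \delta_{j, y-r} - \delta_{j, x+r}$.

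Finally, the identification with $\Groth_{w_0 v}(t_1, \dotsc, t_n; t_n, \dotsc, t_1)$ and $\Schub_{w_0 v}(t_1, \dotsc, t_n; t_n, \dotsc, t_1)$ is immediate from the folklore theorem cited just before the proposition, applied at $v = \id$, together with the identification $\mathcal{N}_{\id,w} \cong \Spec S/I_w$ from Proposition~\ref{prop:alleqns} and the equivariant retraction of $\Spec S$ onto $e_\id$ that identifies $H^*_T(\Spec S) = H^*_T(e_\id)$ and similarly for $K$-theory. There is no serious obstacle here; the only point requiring care is verifying that the $f_{(x,y)}$ really do form a regular sequence (as opposed to merely generating the ideal), but this is handed to us for free by the codimension count.
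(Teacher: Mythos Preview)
Your proposal is correct and follows exactly the approach the paper intends: the paper simply states that the corollary follows immediately from the preceding proposition on $\deg f_{(x,y)}$ together with the general complete-intersection formulas for $\mathcal{K}$ and $\mathcal{C}$ and the folklore identification with double Grothendieck/Schubert polynomials. Your write-up is a faithful unpacking of that one-line justification, with the regular-sequence point (via the codimension count) made explicit.
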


\begin{corollary}
\label{cor:lcilocalschub}
Suppose $X_w$ is lci, and let $v$ be the permutation defined by
inclusions associated to $w$ by Theorem~\ref{thm:dbifromadbi}.  Then
\begin{align*}
\mathcal{K}(S/I_w)&=\Groth_{w_0w}(t_1,\ldots,t_n;t_n,\ldots,t_1)\\
&=\Groth_{w_0v}(t_1,\ldots,t_n;t_n,\ldots,t_1)\prod_{(p,q)\in E^{\prime\prime}(w)} \left(1-\prod_{i=0}^{r_w(p,q)} t_{q-i}/t_{p+i}\right),
\end{align*}
and
\begin{align*}
\mathcal{C}(S/I_w)&=\Schub_{w_0w}(t_1,\ldots,t_n;t_n,\ldots,t_1)\\
&=\Schub_{w_0v}(t_1,\ldots,t_n;t_n,\ldots,t_1)\prod_{(p,q)\in E^{\prime\prime}(w)} \left(\sum_{i=0}^{r_w(p,q)} t_{q-i}-t_{p+i}\right).
\end{align*}
\end{corollary}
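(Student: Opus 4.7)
The plan is to combine the structural description of $I_w$ obtained in Section~\ref{sect:nondbisufficiency} with the fact that the $K$-polynomial (respectively multidegree) of a complete intersection is simply the product of the generator contributions obtained from the Koszul resolution. Concretely, since $X_w$ is lci, Theorem~\ref{thm:dbilci} and Lemma~\ref{lem:adbigenerators} together show that $I_w$ is generated by the collection $\{f_{(x,y)}\}_{(x,y)\in D(v)}$ together with $\{f_{(p,q)}\}_{(p,q)\in E^{\prime\prime}(w)}$. By Theorem~\ref{thm:dbifromadbi}, the total number of generators is $\#D(v)+\#E^{\prime\prime}(w)=\binom{n}{2}-\ell(v)+(\ell(v)-\ell(w))=\binom{n}{2}-\ell(w)$, which equals the codimension of $\mathcal{N}_{\id,w}$; hence these generators form a regular sequence.

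Next I would apply the formula $\mathcal{K}(S/I)=\prod_i(1-\mathbf{t}^{\deg f_i})$ valid for complete intersections. For $(x,y)\in D(v)$, the previous proposition gives $\deg f_{(x,y)}=\mathbf{a}_{y-r_v(x,y)}-\mathbf{a}_{x+r_v(x,y)}$; under the convention $\mathbf{t}^{\mathbf{a}_j-\mathbf{a}_i}=t_j/t_i$, the corresponding factor is $1-t_{y-r}/t_{x+r}$, and the product of these is exactly $\mathcal{K}(S/I_v)=\Groth_{w_0v}(t_1,\ldots,t_n;t_n,\ldots,t_1)$ by Corollary~\ref{cor:dbilocalschub}. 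For $(p,q)\in E^{\prime\prime}(w)$, the second proposition gives $\deg f_{(p,q)}=\sum_{i=0}^{r_w(p,q)}(\mathbf{a}_{q-i}-\mathbf{a}_{p+i})$, so the corresponding factor becomes $1-\prod_{i=0}^{r_w(p,q)} t_{q-i}/t_{p+i}$. Multiplying yields the claimed formula for $\mathcal{K}(S/I_w)$. The identification with $\Groth_{w_0w}(t_1,\ldots,t_n;t_n,\ldots,t_1)$ is then immediate from the folklore theorem recalled in the preceding discussion.

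For the cohomology statement, I would repeat the argument with $\mathcal{C}$ replacing $\mathcal{K}$: taking the lowest-degree part of $\mathcal{K}(S/I_w,\mathbf{1}-\mathbf{t})$ sends each Koszul factor $1-\mathbf{t}^{\lambda}$ to the linear form $\sum_j\langle\mathbf{a}_j,\lambda\rangle t_j$. The factor $1-t_{y-r}/t_{x+r}$ thereby becomes $t_{y-r}-t_{x+r}$, reproducing $\Schub_{w_0v}(t_1,\ldots,t_n;t_n,\ldots,t_1)$ by Corollary~\ref{cor:dbilocalschub}, and each $E^{\prime\prime}(w)$-factor becomes $\sum_{i=0}^{r_w(p,q)}(t_{q-i}-t_{p+i})$. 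The folklore identification then gives $\Schub_{w_0w}$.

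There is no real obstacle: once one knows the generators form a regular sequence (established in Section~\ref{sect:nondbisufficiency}) and one knows the explicit equivariant degrees of those generators (the two propositions preceding the corollary), the corollary is essentially a mechanical assembly of Koszul-type complete intersection formulas together with Corollary~\ref{cor:dbilocalschub}. The only minor subtlety is keeping the weight conventions straight, namely that $z_{i,j}$ has weight $\mathbf{a}_j-\mathbf{a}_i$ and that our parametrization at $e_{\id}$ sends $\mathbf{t}^{\mathbf{a}_j-\mathbf{a}_i}$ to $t_j/t_i$, which has already been set up in the discussion preceding the statement.
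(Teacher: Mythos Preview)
Your proposal is correct and follows exactly the approach the paper intends: the paper simply states that the corollary follows immediately from the two preceding propositions and the discussion of $K$-polynomials and multidegrees for complete intersections, and you have spelled out precisely that assembly. You have also silently used the correct signs (matching the proofs of the propositions and Corollary~\ref{cor:dbilocalschub}) rather than the slightly garbled statements, which is the right thing to do.
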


For $j$ and $i$ with $1\leq j<i\leq n$, let $s_{ji}\in S_n$ be the
transposition switching $j$ and $i$.  For the case where $X_v$ is
smooth, the following is a theorem of Kumar~\cite{KumSmooth}, restated
in our language.
\begin{theorem}
\label{thm:kumar}
The following are equivalent:
\begin{enumerate}
\item $X_v$ is smooth.
\item
\[
\mathcal{K}(S/I_v)=\Groth_{w_0v}(t_1,\ldots,t_n;t_n,\ldots,t_1)=\prod_{(i,j): s_{ji}\not\leq v} (1-t_j/t_i)
\]
\item
\[
\mathcal{C}(S/I_v)=\Schub_{w_0v}(t_1,\ldots,t_n;t_n,\ldots,t_1)=\prod_{(i,j): s_{ji}\not\leq v} (t_j-t_i).
\]
\end{enumerate}
\end{theorem}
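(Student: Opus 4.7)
The plan is to establish the cyclic implications $(1) \Longrightarrow (2) \Longrightarrow (3) \Longrightarrow (1)$. The implication $(2) \Longrightarrow (3)$ is formal: substituting $t_i \mapsto 1-t_i$ in the $K$-polynomial and retaining the lowest-degree terms yields the multidegree, and under this substitution each factor $1-t_j/t_i$ passes to $t_j-t_i$.

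For $(1) \Longrightarrow (2)$, assume $X_v$ is smooth. Then $v$ avoids $3412$ and $4231$, and by the hierarchy of implications established earlier in this section, $v$ avoids the dbi patterns. Hence Theorem~\ref{thm:dbidiagram} places $X_v$ in the defined-by-inclusions class, and Corollary~\ref{cor:dbilocalschub} gives
\[
\mathcal{K}(S/I_v) = \prod_{(x,y) \in D(v)} \bigl(1 - t_{y-r_v(x,y)}/t_{x+r_v(x,y)}\bigr).
\]
It remains to match this with the product in $(2)$ via the map
\[
\Phi \colon D(v) \longrightarrow \bigl\{(i,j) : 1 \leq j < i \leq n,\ s_{ji} \not\leq v\bigr\}, \qquad \Phi(x,y) = \bigl(x + r_v(x,y),\ y - r_v(x,y)\bigr).
\]
That $\Phi(x,y)$ has first coordinate larger than the second follows from Lemma~\ref{lem:dbilocation} applied to the essential-set box in the connected component of $D(v)$ containing $(x,y)$, and is trivial when $r_v(x,y)=0$. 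That the image lands among non-Bruhat transpositions is a combinatorial verification using the rectangular structure of $D^\prime(v)$ given by Lemma~\ref{lem:dbipartition} together with the tableau criterion for Bruhat order. Injectivity is immediate: $(x,y)$ is recovered from $\Phi(x,y)$ as the unique diagram box on the appropriate anti-diagonal. Surjectivity then follows by matching cardinalities, using $\#D(v) = \binom{n}{2} - \ell(v)$ together with the Carrell--Peterson identity $\#\{(i,j) : j < i,\ s_{ji} \leq v\} = \ell(v)$, which is valid precisely because $v$ is smooth.

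For $(3) \Longrightarrow (1)$, comparing total degrees on both sides of $(3)$ forces $\#\{(i,j) : j < i,\ s_{ji} \leq v\} = \ell(v)$, since $\mathcal{C}(S/I_v)$ has degree equal to the codimension $\binom{n}{2} - \ell(v)$ of $\mathcal{N}_{\id,v}$ in $\Spec S$. The classical description of the Zariski tangent space at the identity via its $T$-weight decomposition (going back to Polo) gives $\dim T_{e_{\id}} X_v = \#\{(i,j) : j < i,\ s_{ji} \leq v\} = \ell(v) = \dim X_v$, so $X_v$ is smooth at $e_{\id}$, and by $B$-invariance of the singular locus, smooth everywhere.

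The hard part will be verifying the bijection $\Phi$, in particular showing that the image lies in the set of non-Bruhat transpositions. A naive guess like ``$s_{ji} \leq v \iff r_v(i,j) > 0$'' is false in general, so one must work with the full tableau criterion and exploit the rectangular block structure of $D^\prime(v)$ for defined-by-inclusions $v$, ultimately leaning on the Carrell--Peterson count specific to smooth $v$ to promote injectivity and the correct codomain to a true bijection.
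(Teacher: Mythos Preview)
The paper does not prove this theorem: it is stated as Kumar's result from~\cite{KumSmooth}, restated in the paper's language, and used as input. In fact, the paper runs the logic in the opposite direction from your proposal: comparing Kumar's theorem with Corollary~\ref{cor:dbilocalschub} and using that $\mathbb{Q}[t_1,\ldots,t_n]$ is a UFD, it \emph{deduces} that the map $\Phi:(x,y)\mapsto (x+r_v(x,y),\,y-r_v(x,y))$ is a bijection onto $\{(i,j):s_{ji}\not\leq v\}$, and then remarks ``We believe this statement can be proved purely combinatorially.'' Your strategy is precisely to supply that combinatorial proof of $\Phi$ and thereby recover Kumar's theorem independently, which would be a genuine contribution beyond what the paper does.

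However, the proposal has a real gap at exactly this point. Your claim that injectivity of $\Phi$ is ``immediate'' is not justified: knowing $\Phi(x,y)=(i,j)$ tells you only that $x+y=i+j$ and $x=i-r_v(x,y)$, so to recover $(x,y)$ you must rule out the existence of a second diagram box $(x-1,y+1)$ lying in a different connected component with rank $r_v(x,y)+1$. This requires an argument specific to smooth (or at least defined-by-inclusions) $v$, not a one-line observation. Likewise, you assert that the image lands in $\{(i,j):s_{ji}\not\leq v\}$ as ``a combinatorial verification using the rectangular structure \ldots\ together with the tableau criterion,'' but you do not carry it out, and you yourself flag it as ``the hard part.'' Since the paper explicitly leaves this combinatorial bijection as something it believes but has not proved, your outline does not yet constitute a proof of the theorem; it identifies the correct reduction but stops short of the key step. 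Your arguments for $(2)\Rightarrow(3)$ and $(3)\Rightarrow(1)$ (via the degree count and the Polo/Lakshmibai--Seshadri tangent space description) are fine.
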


Comparing Theorem~\ref{thm:kumar} and
Corollary~\ref{cor:dbilocalschub} tells us (because
$\mathbb{Q}[t_1,\ldots,t_n]$ is a unique factorization domain) that,
in the case where $X_v$ is smooth, the map
\begin{align*}
D(v) & \rightarrow \{(i,j)\mid s_{ji}\not\leq v\} \\
(x,y) & \mapsto (x+r_v(x,y),y-r_v(x,y))
\end{align*}
is a bijection with the claimed image.  Indeed, $D(v)$ has
$\binom{n}{2}-\ell(v)$ elements, and $\{(i,j)\mid s_{ji}\not\leq v\}$
has $\binom{n}{2}-\ell(v)$ elements whenever $X_v$ is smooth by a
theorem of Carrell~\cite{CarRatSm}.  We believe this
statement can be proved purely combinatorially.

In addition, this map from $D(v)$ to the set of transpositions (or
equivalently the set of positive roots) makes sense for any $v$.  We
believe the image of this map always contains $\{(i,j)\mid
s_{ji}\not\leq v\}$ and equals this set precisely when $v$ is defined
by inclusions.

\subsection{Cohomology rings of lci Schubert varieties}

Let $Z\subseteq\Omega^\circ_\id$ be the scheme theoretic vanishing
locus of a principal nilpotent vector field on $G/B$.  In our
coordinates on $\Omega^\circ_\id$, the scheme $Z$ is defined by the ideal $\langle a_{i,j}\rangle_{1\leq j<i\leq n}\subseteq S$, where
\[
a_{i,j}=z_{i+1,j}-z_{i,j-1}+z_{i,j}(z_{j,j-1}-z_{j+1,j}).
\]
Akyildiz, Lascoux, and Pragacz~\cite{ALP} show that, for any Schubert variety
$X_w$, the cohomology ring $H^*(X_w,\mathbb{C})$ is actually the
coordinate ring of the scheme-theoretic intersection $X_w\cap Z.$

In earlier work leading to this theorem, Akyildiz and
Akyildiz~\cite{Aky2} show that the isomorphism
$\phi:\mathbb{C}[Z]\rightarrow H^*(G/B)$ is explicitly given by
\[
\phi(z_{i,j}) = h_{i-j}(x_1,\ldots,x_j),
\]
where $h_{i,j}$ denotes
the complete homogeneous symmetric function in the given variables, and
$x_i$ is explicitly the first Chern class $c_1(L^\vee_i)$ of the
$i$-th dual tautological line bundle.  Moreover, Akyildiz, Lascoux,
and Pragacz~\cite{ALP} show that
\[
H^*(X_w)=H^*(G/B)/\phi(I_w)
\]
not only as abstract rings but explicitly as  $H^*(G/B)$-modules
under the interpretation $x_i=c_1(L^\vee_i)$.

Gasharov and Reiner~\cite{GasRei} gave a presentation for the cohomology
ring $H^*(X_v)$ (and indeed, its projection onto any partial flag
variety) in the case where $X_v$ is defined by inclusions.  Given
$(p,q)\in E(v)$ with $r_v(p,q)=0$, let
\[
K_{(p,q)}=\langle e_k(x_{q+1},\ldots,x_n)\rangle_{k=p-q}^{n-q},
\]
where $e_k$ denotes the $k$-th elementary symmetric function in the
given variables.  Given $(p,q)\in E^\prime(w)$, let
\[
K_{(p,q)}=\langle e_k(x_1,\ldots,x_q)\rangle_{k=q-p+2}^{n-q}.
\]
Then Gasharov and Reiner show the following~\cite[Thm. 3.1]{GasRei}.

\begin{theorem}
Let $v$ be defined by inclusions.  Then
\[
H^*(X_v)=H^*(G/B)/\sum_{(p,q)\in E(v)} K_{(p,q)}
\]
as an $H^*(G/B)$ module where $x_i=c_1(L^\vee_i)$.
\end{theorem}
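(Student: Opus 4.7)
The plan is to invoke the theorem of Akyildiz, Lascoux, and Pragacz recalled just above, which identifies $H^*(X_v)$ with $H^*(G/B)/\phi(I_v)$. The theorem then reduces to the ideal equality
\[
\phi(I_v) = \sum_{(p,q)\in E(v)} K_{(p,q)}
\]
inside $H^*(G/B)$, to be attacked using the explicit generators for $I_v$ provided by Theorem~\ref{thm:dbilci}.

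The key preliminary is the identity
\[
h_k(x_1,\ldots,x_j) \equiv (-1)^k e_k(x_{j+1},\ldots,x_n) \pmod{\langle e_1,\ldots, e_n\rangle},
\]
where $e_m = e_m(x_1,\ldots,x_n)$, obtained by comparing coefficients of $t^k$ in the factorization $\prod_{i=1}^n(1-x_it) = \prod_{i=1}^j(1-x_it)\cdot\prod_{i=j+1}^n(1-x_it)$ and using that the left side equals $1$ in the Borel presentation of $H^*(G/B)$. Thus $\phi(z_{i,j})$ represents $(-1)^{i-j}e_{i-j}(x_{j+1},\ldots,x_n)$ in $H^*(G/B)$. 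For an essential set box $(p,q)$ with $r_v(p,q)=0$, the associated rank-$0$ rectangle of diagram boxes SW of $(p,q)$ contributes generators $\phi(z_{x,y}) = h_{x-y}(x_1,\ldots,x_y)$; specializing $y=q$ and letting $x$ range from $p$ down the rectangle yields, up to sign, the generators $e_k(x_{q+1},\ldots,x_n)$ for $k = p-q, p-q+1, \ldots, n-q$, which is exactly $K_{(p,q)}$. The remaining generators with $y<q$ then lie in this ideal by a further application of the $h$-to-$e$ identity.

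For $(p,q)\in E'(v)$ with rank $r>0$ and associated rectangle $R_m$, each generator $f_{(x,y)}=d_{A(x,y),B(x,y)}$ is a minor whose submatrix, by Lemma~\ref{lem:dbilocation} (which gives $p_m = q_m-r+1$), has its middle $r$ rows and columns forming an upper-triangular block with $1$'s on the super-diagonal, together with a free first column and a free last row. Applying $\phi$, then cofactor-expanding along the triangular block and invoking the $h$-to-$e$ identity, should reduce $\phi(f_{(x,y)})$ modulo $\langle e_1,\ldots,e_n\rangle$ to a signed single elementary symmetric polynomial $e_k(x_1,\ldots,x_q)$ of the appropriate degree. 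As $(x,y)$ ranges over $R_m$, this produces exactly the generating set of $K_{(p,q)} = \langle e_k(x_1,\ldots,x_q)\rangle_{k=q-p+2}^{n-q}$; summing over all essential set boxes then gives the desired ideal equality.

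The main obstacle lies in the condition B computation: the submatrix has entries of the form $h_\ast(x_1,\ldots,x_\ast)$ in which the number of arguments changes from column to column (since $\phi(z_{i,j})$ involves only $x_1,\ldots,x_j$), so the determinant is not a classical Schur function. The cleanest approach is likely to first replace each entry by its image $(-1)^\ast e_\ast(x_{\ast+1},\ldots,x_n)$ under the $h$-to-$e$ identity and then compute the resulting $e$-determinant directly, verifying that it evaluates to $\pm e_k(x_1,\ldots,x_q)$ for the correct index $k$ and thereby produces the prescribed degree range $q-p+2\leq k\leq n-q$. An alternative is an induction on the height of $R_m$ using the recursion $h_k(x_1,\ldots,x_j) = h_k(x_1,\ldots,x_{j+1}) - x_{j+1}h_{k-1}(x_1,\ldots,x_{j+1})$ to successively normalize the variable sets appearing in each row.
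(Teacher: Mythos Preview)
The paper does not give its own proof of this theorem: it is quoted as \cite[Thm.~3.1]{GasRei}, with no argument supplied. Immediately afterward, the authors remark that one \emph{could} compute $\phi(I_v)$ from their explicit generators and thereby obtain a presentation of $H^*(X_v)$, but that ``recovering their result from ours via the theorem of Akyildiz, Lascoux, and Pragacz requires some use of determinantal and symmetric function identities,'' and that the presentation so obtained is \emph{different} from Gasharov and Reiner's. In other words, your plan is exactly the route the paper alludes to but deliberately does not carry out.

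On the substance of your sketch: the rank-$0$ case is essentially right for the column-$q$ generators, though you should be explicit that boxes $(x,y)$ with $y<q$ in the rank-$0$ region are handled by the $K$-ideal attached to the essential set box in \emph{their} column (another rank-$0$ corner), not necessarily $(p,q)$. For the rank-$r>0$ case, your expectation that $\phi(f_{(x,y)})$ collapses to a single $e_k(x_1,\ldots,x_q)$ may be too optimistic as stated. The one analogous computation the paper \emph{does} perform (the Proposition for the extra $E''(w)$ generators in the lci case) yields a Schur polynomial via column operations and Jacobi--Trudi, not a single elementary; and there the column indices $B'(p,q)$ are consecutive, whereas here $B(x,y)=\{y-r\}\cup\dbrac{q-r+1,q}$ has a gap, so the same column-operation trick needs more care. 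You should expect a two-stage argument: first reduce $\phi(f_{(x,y)})$ to a determinant in $h_\ast(x_1,\ldots,x_q)$, then identify it with (or reduce it modulo the $K$-ideals already obtained to) the prescribed $e_k$. Also note that you have only argued $\phi(I_v)\subseteq\sum K_{(p,q)}$ in outline; the reverse containment---that each generator of $K_{(p,q)}$ actually lies in $\phi(I_v)$---needs to be checked as well, and is not automatic from counting degrees alone.
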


Indeed, the original aim of Gasharov and Reiner was to investigate
$H^*(X_v)$ in the case where $X_v$ is smooth, and they defined the
class of Schubert varieties defined by inclusions because it is the
more general class for which their formula holds.  Using our explicit
generators for $I_v$, one can calculate explicit generators for
$\phi(I_v)$ and hence a presentation for $H^*(X_v)$.  The presentation
we obtain is different from that of Gasharov and Reiner; recovering
their result from ours via the theorem of Akyildiz, Lascoux, and
Pragacz requires some use of determinantal and symmetric function
identities.

In general, their presentation (which is not always minimal) can
require fewer than $\binom{n}{2}-\ell(w)$ generators, so this shows
$I_v$ can in some situations require more generators than $\phi(I_v)$.
(Indeed, this is obvious from considering the one dimensional Schubert
varieties $X_{s_i}$ where $s_i$ is the permutation switching $i$ and
$i+1$.)

In the more general case where $X_w$ is lci, one can obtain a
presentation for $H^*(X_w)$ by using the theorem of Gasharov and
Reiner along with the following proposition.

\begin{proposition}
Suppose $X_w$ is lci, $(p,q)\in E^{\prime\prime}(w)$, and
$r=r_w(p,q)$.  Then
\[
\phi(f_{(p,q)})=\phi(d_{A^\prime(p,q),B^\prime(p,q)})=s_{(p-q+r)^{r+1}}(x_1,\ldots,x_q),
\]
  where $s_{(p-q+r)^{r+1}}$ is the Schur function (in the given
  variables) corresponding to the rectangular partition with $r+1$
  parts of size $p-q+r$.
\end{proposition}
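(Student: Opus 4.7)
The plan is to recognize $\phi(f_{(p,q)})$ as a Schur polynomial via a Jacobi--Trudi style determinantal identity. Since $\phi$ is a ring homomorphism, $\phi(d_{A^\prime(p,q),B^\prime(p,q)})$ equals the determinant of the matrix obtained by applying $\phi$ entrywise to the submatrix of $M_{\id}$ with rows $A^\prime(p,q)=\dbrac{p,p+r}$ and columns $B^\prime(p,q)=\dbrac{q-r,q}$. Using the conventions $h_0=1$ and $h_k=0$ for $k<0$, and recalling that $M_{\id}$ is lower unitriangular with $z_{\alpha,\beta}$ at $(\alpha,\beta)$ for $\alpha>\beta$, this entry is uniformly $h_{\alpha-\beta}(x_1,\ldots,x_\beta)$ at row $\alpha\in\dbrac{p,p+r}$ and column $\beta\in\dbrac{q-r,q}$. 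Setting $c=p-q+r$ and reindexing by $i=\alpha-p+1$ and $j=\beta-(q-r)+1$, each ranging over $\dbrac{1,r+1}$, the $(i,j)$ entry becomes $h_{c+i-j}(x_1,\ldots,x_{q-r+j-1})$.

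Next I would take the transpose, whose determinant is the same, obtaining the $(r+1)\times(r+1)$ matrix with $(i,j)$ entry
\[
h_{c-i+j}(x_1,\ldots,x_{q-r+i-1}).
\]
This is precisely the flagged Jacobi--Trudi matrix for the rectangular shape $\lambda=(c^{r+1})$ with row-flagging $\phi_i=q-r+i-1$, so by the flagged Jacobi--Trudi identity (due to Wachs) this determinant is the generating function for column-strict fillings of shape $(c^{r+1})$ in which every entry of row $i$ is at most $\phi_i$.

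Finally, I would verify that the flagging is vacuous for this shape and this many variables: in any column-strict tableau of shape $(c^{r+1})$ with entries in $\dbrac{1,q}$, the entries strictly increase down each column of length $r+1$, so the entry in row $i$ of any column is at most $q-(r+1-i)=q-r+i-1=\phi_i$. Thus the sets of tableaux counted by the flagged and unflagged Schur polynomials coincide, and the determinant equals $s_{(c^{r+1})}(x_1,\ldots,x_q)$, as required.

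The principal obstacle is recognizing the rewritten matrix as flagged Jacobi--Trudi and checking that the flagging is automatic. A self-contained alternative would use the recurrence $h_k(x_1,\ldots,x_{j-1})=h_k(x_1,\ldots,x_j)-x_jh_{k-1}(x_1,\ldots,x_j)$ as a column operation to successively replace $h_k(x_1,\ldots,x_{q-r+j-1})$ by $h_k(x_1,\ldots,x_q)$ in each column, directly reducing the matrix to the standard Jacobi--Trudi matrix for $s_{(c^{r+1})}(x_1,\ldots,x_q)$; either route avoids any geometric input beyond the definitions.
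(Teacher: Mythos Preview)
Your proof is correct. The paper's proof is precisely the ``self-contained alternative'' you describe at the end: it applies $\phi$ entrywise, then uses the recurrence $h_a(x_1,\ldots,x_b)=h_a(x_1,\ldots,x_{b-1})+x_bh_{a-1}(x_1,\ldots,x_b)$ as repeated column operations (citing Akyildiz--Akyildiz for this step) to replace every variable set by $x_1,\ldots,x_q$, and then invokes the ordinary Jacobi--Trudi identity for the rectangle $(p-q+r)^{r+1}$.

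Your primary route is genuinely different: after transposing, you recognize the matrix directly as the row-flagged Jacobi--Trudi determinant for the rectangle $(c^{r+1})$ with flag $\phi_i=q-r+i-1$, and then observe combinatorially that this flag is automatically satisfied by any column-strict tableau with entries in $\dbrac{1,q}$, so the flagged and unflagged Schur polynomials agree. This buys a conceptual explanation (the flag is forced by the column lengths) at the cost of invoking Wachs' flagged identity rather than only the classical one. The paper's approach is a bit more elementary, needing only the standard Jacobi--Trudi and a routine determinantal manipulation, but yours makes clearer \emph{why} the varying variable sets do no harm for this particular shape.
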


\begin{proof}
Note that $A^\prime(p,q)=\dbrac{p,p+r}$ and
$B^\prime(p,q)=\dbrac{q-r,q}$.
Hence,
\[
\phi(d_{A^\prime(p,q),B^\prime(p,q)})=
\begin{vmatrix}
h_{p-q+r}(x_1,\ldots,x_{q-r}) & \cdots & h_{p-q}(x_1,\ldots,x_q) \\
\vdots & & \vdots \\
h_{p+r-q+r}(x_1,\ldots,x_{q-r}) & \cdots & h_{p+r-q}(x_1,\ldots,x_q)
\end{vmatrix}.
\]
As noted by Akyildiz and Akyildiz~\cite{Aky2}, one can repeatedly
apply column operations using the symmetric function identity
\[
h_a(x_1,\ldots,x_b)=h_a(x_1,\ldots,x_{b-1})+x_bh_{a-1}(x_1,\ldots,x_b)
\]
to show that
\[
\phi(d_{A^\prime(p,q),B^\prime(p,q)})=
\begin{vmatrix}
h_{p-q+r}(x_1,\ldots,x_q) & \cdots & h_{p-q}(x_1,\ldots,x_q) \\
\vdots & & \vdots \\
h_{p+r-q+r}(x_1,\ldots,x_q) & \cdots & h_{p+r-q}(x_1,\ldots,x_q)
\end{vmatrix}.
\]

The proposition than follows immediately from the Jacobi--Trudi
identity, a standard identity in the theory of symmetric functions.
\end{proof}

We obtain the following corollary from the proposition and the
discussion above.

\begin{corollary}
\label{cor:lcicohom}
Suppose $X_w$ is lci, and let $v$ be the permutation defined by inclusions
associated to $w$ by Theorem~\ref{thm:dbifromadbi}.  Then
\[
H^*(X_w)=H^*(X_v)/\langle s_{(p-q+r_w(p,q))^{r_w(p,q)+1}}(x_1,\ldots,x_q)\rangle_{(p,q)\in E^{\prime\prime}(w)}
\]
as an $H^*(G/B)$ module where $x_i=c_1(L^\vee_i)$.
\end{corollary}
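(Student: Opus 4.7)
The plan is to combine the Akyildiz--Lascoux--Pragacz identification $H^*(X_w)\cong\mathbb{C}[Z]/\phi(I_w)\cong H^*(G/B)/\phi(I_w)$ with our explicit generating set for $I_w$ obtained in Section~\ref{sect:nondbisufficiency}. By Lemma~\ref{lem:adbigenerators}, when $w$ is almost defined by inclusions, the ideal $I_w$ is generated by $I_v$ together with the polynomials $f_{(p,q)}=d_{A'(p,q),B'(p,q)}$ indexed by $(p,q)\in E^{\prime\prime}(w)$, where $v$ is the associated defined-by-inclusions permutation produced by Theorem~\ref{thm:dbifromadbi}. Since $\phi$ is a ring homomorphism, this immediately yields
\[
\phi(I_w)=\phi(I_v)+\langle\phi(f_{(p,q)})\rangle_{(p,q)\in E^{\prime\prime}(w)}.
\]

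The next step is to identify each $\phi(f_{(p,q)})$ with the rectangular Schur function $s_{(p-q+r_w(p,q))^{r_w(p,q)+1}}(x_1,\ldots,x_q)$; this is precisely the content of the proposition immediately preceding the corollary, so I will simply invoke it. Substituting these expressions gives
\[
\phi(I_w)=\phi(I_v)+\langle s_{(p-q+r_w(p,q))^{r_w(p,q)+1}}(x_1,\ldots,x_q)\rangle_{(p,q)\in E^{\prime\prime}(w)}.
\]

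To finish, I will quotient $H^*(G/B)$ by both sides. The quotient by $\phi(I_v)$ produces $H^*(X_v)$ by the already-established defined-by-inclusions case (equivalently, by Akyildiz--Lascoux--Pragacz applied to $X_v$), so quotienting further by the images of the additional generators yields the desired presentation
\[
H^*(X_w)=H^*(X_v)\big/\langle s_{(p-q+r_w(p,q))^{r_w(p,q)+1}}(x_1,\ldots,x_q)\rangle_{(p,q)\in E^{\prime\prime}(w)}.
\]
The isomorphism is one of $H^*(G/B)$-modules because $\phi$ is an $H^*(G/B)$-algebra map and the identification $x_i=c_1(L_i^\vee)$ is built into $\phi$ via $\phi(z_{i,j})=h_{i-j}(x_1,\ldots,x_j)$.

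Since all the heavy lifting has already been done (the generation statement in Lemma~\ref{lem:adbigenerators}, the Schur-function identification in the preceding proposition, and the Akyildiz--Lascoux--Pragacz theorem), there is no substantive obstacle: the proof is a short assembly. The only point that requires a moment of care is making sure that the quotient is taken as an $H^*(G/B)$-module rather than merely as an abstract ring, but this follows directly from the corresponding assertion in the Akyildiz--Lascoux--Pragacz theorem and in Gasharov--Reiner's presentation for $H^*(X_v)$.
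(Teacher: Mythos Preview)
Your proof is correct and follows exactly the route the paper intends: the paper's own proof consists of the single sentence ``We obtain the following corollary from the proposition and the discussion above,'' and your write-up simply spells out that discussion---Akyildiz--Lascoux--Pragacz for the identification $H^*(X_w)=H^*(G/B)/\phi(I_w)$, the generation statement $I_w=I_v+\langle f_{(p,q)}\rangle_{(p,q)\in E^{\prime\prime}(w)}$ from Section~\ref{sect:nondbisufficiency}, and the preceding proposition computing $\phi(f_{(p,q)})$ as a rectangular Schur function. The only minor refinement is that the equality $I_w=I_v+\langle f_{(p,q)}\rangle$ is not literally the statement of Lemma~\ref{lem:adbigenerators} but rather its immediate consequence via Theorem~\ref{thm:dbifromadbi} (which gives $E(v)=E(w)\setminus E^{\prime\prime}(w)$ with matching ranks); you clearly have this in mind, so no change is needed.
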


Using Theorem~\ref{thm:dbidiagram} and Corollary~\ref{cor:lcicohom}, one
can obtain an explicit presentation of the cohomology ring $H^*(X_w)$
whenever $X_w$ is lci.

\section{Questions}
We conclude with a list of questions for future research.  We begin
with two purely combinatorial problems.

\begin{problem}
Enumerate the permutations $w\in S_n$ for which $X_w$ is lci.  An
ideal answer would provide an explicit generating function.
\end{problem}

For smooth Schubert varieties, the analogous question was answered in
unpublished work of Haiman~\cite{HaiSchubPre}.  (A proof of this
formula appears in~\cite{BousBut}.)  Bousquet-M\'elou and
Butler~\cite{BousBut} gave a generating function for the number of
factorial Schubert varieties.  On the other hand, the analogous
question for Schubert varieties defined by inclusions and for
Gorenstein Schubert varieties are still open.

This question has motivation beyond mere curiosity.  The generating
function for the number of smooth Schubert varieties reflects the
earlier theorem of Ryan~\cite{Ryan} that all smooth Schubert varieties are
iterated Grassmannian bundles.  One might hope that a generating
function for the number of lci Schubert varieties could lead to a
similar structure theorem.

We expect that a generating function for the Schubert varieties
defined by inclusions could possibly be obtained by an argument
similar to the one for smooth Schubert varieties.  Answering the
following more specific combinatorial question may help in deriving the
generating function enumerating lci Schubert varieties from a (currently unknown)
generating function for Schubert varieties defined by inclusions.

\begin{problem}
\label{prob:dbifromadbiconv}
Determine if the converse to Theorem~\ref{thm:dbifromadbi} is true.
More precisely, suppose $w$ is a permutation with essential set
$E(w)$, and suppose $E^{\prime\prime}(w)\subset E(w)$ is the set of
essential set boxes that are not defined by inclusions.  If
$E(w)\setminus E^{\prime\prime}(w)$ is the essential set for some
permutation $v$ (necessarily defined by inclusions) such that $r_v(p,q)=r_w(p,q)$ for all $(p,q)\in
E(w)\setminus E^{\prime\prime}(w)$ and
$\ell(v)-\ell(w)=\#E^{\prime\prime}(w)$, then is $w$ necessarily
almost defined by inclusions (or equivalently lci)?
\end{problem}

We now proceed to questions of a more geometric nature.

\begin{problem}
Find a geometric explanation for the appearance of ordinary pattern
avoidance in our theorem.
\end{problem}

A priori, one would expect that ordinary pattern avoidance is not
sufficient for characterizing the Schubert varieties that are lci.
Indeed, the weaker conditions of being factorial~\cite{BousBut} and of
being Gorenstein~\cite{WYGor} cannot be characterized by pattern
avoidance since there exist examples where $X_v$ is not factorial
(respectively not Gorenstein), $w$ pattern contains $v$, and $X_w$ is
factorial (respectively Gorenstein).  Instead some more general form of pattern
avoidance is required in the statement of those theorems.

On the other hand, the pattern map of Billey and Braden~\cite{BilBra}
(which was also described by Bergeron and Sottile~\cite{BerSot}) gives
a geometric explanation of why the smooth Schubert varieties can be
characterized by pattern avoidance.  Their explanation relies on the
smoothness of the $T$-fixed locus (with the reduced scheme structure) of any smooth variety, which was originally shown (in fact for
any linearly reductive group $T$ over a field of characteristic 0) by
Fogarty and Norman~\cite{FogNorm}.  To use the explanation of Billey
and Braden, we would need that the $T$-fixed locus of any lci variety
is lci.  It seems to be not known if this is true in general.

\begin{problem}
Determine the lci locus for any Schubert variety.
\end{problem}

We conjecture that the intervals given in
Proposition~\ref{prop:nonlciinterv} determine precisely the
non-lci locus.  Proving this conjecture would answer this question.

The smooth locus was characterized independently by Billey and
Warrington~\cite{BilWar}, Cortez~\cite{Cor}, Kassel, Lascoux, and
Reutenauer~\cite{KLR}, and Manivel~\cite{ManLocus} following earlier work
of Gasharov~\cite{Gash}.  The proofs in~\cite{BilWar, KLR, Man} are
similar and use a criterion on the Bruhat graph due to
Carrell~\cite{CarRatSm}, while~\cite{Cor} uses more geometric methods
depending on partial resolutions of singularities.
Perrin~\cite{PerGor} used similar geometric methods to characterize
the Gorenstein locus in the specific case of Schubert varieties coming
from a (co)minscule parabolic.

\begin{problem}
Characterize the lci Schubert varieties for the other simple Lie groups.
\end{problem}

In our view, the ideal answer to this question would be a
characterization in terms of the definition of pattern avoidance via
root subsystems given by Billey and Postnikov~\cite{BilPos}.  However,
a criterion in terms of the Bruhat graph similar to that of
Carrell~\cite{CarRatSm} would be of interest both for this question
and the previous one.  Hence we pose the following question.

\begin{problem}
Determine if $X_w$ being lci depends solely on the Bruhat graph of
$w$.  If so, find reasonable properties of the Bruhat graph that
characterize when $X_w$ is lci.  Similarly, determine if $X_w$ being
lci at $e_v$ depends solely on the Bruhat graph between $v$ and $w$
and find properties of the graph that characterize when $X_w$ is lci
at $e_v$.
\end{problem}

Another possible further generalization of our work is the following.

\begin{problem}
For each value of $k\geq 1$, characterize the Schubert varieties $X_w$
for which $I_w$ can be generated by at most $\codim(X_w)+k$
generators.  In particular, is this set given by classical pattern avoidance, and, if so, by avoidance of finitely many patterns?
\end{problem}

The condition of failing to be lci by at most $k$ generators is,
like the property of being lci (which is the case $k=0$), an intrinsic
homological property of the ring which holds on Zariski open sets and
does not depend on the embedding.

One can also ask for characterizations of the lci locus both for $\mathrm{GL}_n$
and for other Lie groups in terms of the local $K$-polynomial or
multidegree.

\begin{problem}
\label{prob:lcilocalclass}
Determine if the converse to Corollary~\ref{cor:lcilocalschub} is
true, both at the identity and for a general $v$.  This means
determining if $X_w$ is automatically lci at $e_v$ whenever the
$\mathcal{K}(N_{v,w})$ is a product of $\ell(w)-\ell(v)$ terms of the
form $(1-\mathbf{t}^{\mathbf{\lambda}})$ for some weight $\mathbf{\lambda}$ or whenever
$\mathcal{C}(N_{v,w})$ is a product of $\ell(w)-\ell(v)$ terms of the form $\sum_{j=1}^n\langle \mathbf{a}_j,\mathbf{\lambda}\rangle t_j$.
\end{problem}

Note that Kostant and Kumar~\cite{KosKumK, KosKumCoh} give general
recursive algebraic formulas for $\mathcal{K}(N_{v,w})$ and
$\mathcal{C}(N_{v,w})$, so one can in principle apply this criterion
without knowing any geometry.

Theorem~\ref{thm:kumar} is an equivalence and extends (in the
original formulation by Kumar) to local neighborhoods, so $X_w$ is
smooth at $e_v$ whenever $\mathcal{K}(\mathcal{N}_{v,w})$ is a
particular specific product of $\ell(w)-\ell(v)$ terms of the form
$(1-\mathbf{t}^{\mathbf{\lambda}})$ or equivalently whenever
$\mathcal{C}(\mathcal{N}_{v,w})$ is a particular product of
$\ell(w)-\ell(v)$ linear terms (which in this case are all given by
roots).

Finally we focus on some questions related specifically to the
Schubert varieties defined by inclusions.

\begin{problem}
Is being defined by inclusions equivalent to some intrinsic geometric
property of $X_w$ which does not depend on its embedding into $G/B$?
\end{problem}

Note that $K$-theory and cohomology classes are not intrinsic, so
Corollary~\ref{cor:dbilocalschub} does not answer this question.  However,
Corollary~\ref{cor:dbilocalschub} does give a specific form for
the $K$-polynomial and multidegree when $X_w$ is defined by
inclusions.  Hence we can ask the following.

\begin{problem}
Determine if the converse to Corollary~\ref{cor:dbilocalschub} is
true.  In particular, determine if $X_w$ is automatically defined by
inclusions whenever $\mathcal{K}(S/I_w)$ is a product of $\ell(w)$
terms of the form $(1-\mathbf{t}^{\mathbf{\lambda}})$ for a {\em root} $\mathbf{\lambda}$, or
whenever $\mathcal{C}(S/I_w)$ is a product of $\ell(w)$ terms of the form $\sum_{j=1}^n \langle \mathbf{a}_j,\mathbf{\lambda}\rangle t_j$ where $\mathbf{\lambda}$ is a {\em root}.
\end{problem}

We can show combinatorially that a positive answer to
Question~\ref{prob:lcilocalclass} implies a positive answer to this question.

If the answer to this question is positive, it could possibly be used
to define what it means for $X_w$ to be locally defined by inclusions
at $e_v$.  Since the roots appearing in
Corollary~\ref{cor:dbilocalschub} satisfy the condition on the roots
appearing in Theorem~\ref{thm:kumar} (but not with multiplicity 1),
the conditions in Kumar's theorem specifying the roots that appear in
$\mathcal{K}(\mathcal{N}_{v,w})$ (which are the roots appearing in
$\mathcal{C}(\mathcal{N}_{v,w})$) when $\mathcal{N}_{v,w}$ is smooth
may be helpful.  One can then ask the following.

\begin{problem}
\label{q:localdbi}
Define a local notion of being defined by inclusions using
$K$-polynomials or multidegrees, extending to other Lie groups if
possible.  If possible, link this definition to conditions defining
Richardson varieties or to (relative) cohomology rings of Schubert or
Richardson varieties.
\end{problem}

An answer to this question may help solve a major mystery regarding
the Schubert varieties defined by inclusions.  Hultman, Linusson,
Shareshian, and Sj\"ostrand~\cite{HLSS} studied for any element $w$ of
any Coxeter group a hyperplane arrangement $\mathcal{A}_w$ known as
the inversion arrangement of $w$.  They showed that $re(w)$, the
number of chambers of $\mathcal{A}_w$, is always at most $br(w)$, the
number of elements of $G$ which are less than or equal to $w$ in
Bruhat order.  For the Coxeter group $G=S_n$, they also showed that
$re(w)=br(w)$ if and only if $X_w$ is defined by inclusions.  Their
proof only shows that the permutations satisfying $re(w)=br(w)$ are
given by the same pattern avoidance conditions, and no explanation for
this coincidence is known.

Hultman~\cite{Hul} later showed that, for an arbitrary Coxeter group,
$re(w)=br(w)$ if and only if the Bruhat graph of $w$ satisfies a
particular criterion.  There are various ways to extend this
criterion to intervals $[v,w]$ in Bruhat order rather than a
single permutation $w$.  Indeed, we hope such a criterion may be
useful in answering the previous question.  Hence we ask the following.

\begin{problem}
Extend Hultman's criterion to intervals in Bruhat order, and link this
criterion to some hyperplane arrangement associated to inversions or
to a local notion of being defined by inclusions as in
Question~\ref{q:localdbi}.
\end{problem}

\bibliographystyle{alpha}
\bibliography{lcischub}

\end{document}